\theoremstyle{plain}
\newtheorem{Thm}{Theorem}[subsection]
\newtheorem{Cor}[Thm]{Corollary}
\newtheorem{Prop}[Thm]{Proposition}
\newtheorem{Lem}[Thm]{Lemma}
\newtheorem{Cl}[Thm]{Claim}
\newtheorem{Thm'}{Theorem}[section]
\newtheorem{Cor'}[Thm']{Corollary}
\newtheorem{Prop'}[Thm']{Proposition}
\newtheorem{Lem'}[Thm']{Lemma}
\newtheorem{Cl'}[Thm']{Claim}
\theoremstyle{definition}
\newtheorem{Def}[Thm]{Definition}
\newtheorem{Emp}[Thm]{}
\newtheorem{Not}[Thm]{Notation}
\newtheorem{Def'}[Thm']{Definition}
\newtheorem{Rem'}[Thm']{Remark}
\newtheorem{Rem1'}[Thm']{Remarks}
\newtheorem{Emp'}[Thm']{}
\newtheorem{Ex'}[Thm']{Example}
\newtheorem{Exs'}[Thm']{Examples}
\newtheorem{Con'}[Thm']{Construction}
\newtheorem{Not'}[Thm']{Notation}
\newtheorem{Q'}[Thm']{Question}
\numberwithin{equation}{section}
\newcommand{\ql}{\B{Q}_{\ell}}
\newcommand{\qlbar}{\overline{\B{Q}_l}}
\newcommand{\La}{\Lambda}
\newcommand{\ov}{\overline}
\newcommand{\fq}{\B{F}_q}
\newcommand{\B}[1]{\mathbb#1}
\newcommand{\cal}[1]{\mathcal{#1}}
\newcommand{\form}[1]{(\ref{Eq:#1})}
\newcommand{\C}[1]{\cal#1}
\newcommand{\isom}{\overset {\thicksim}{\to}}
\newcommand{\si}{\sigma}
\newcommand{\surj}{\twoheadrightarrow}
\newcommand{\lra}{\longrightarrow}
\newcommand{\lla}{\longleftarrow}
\newcommand{\hra}{\hookrightarrow}
\newcommand{\wt}{\widetilde}
\newcommand{\Gm}{\Gamma}
\newcommand{\p}{\partial}
\newcommand{\dt}{\delta}
\newcommand{\Dt}{\Delta}
\newcommand{\al}{\alpha}
\newcommand{\rl}[1]{Lemma \ref{L:#1}}
\newcommand{\rn}[1]{Notation \ref{N:#1}}
\newcommand{\rcl}[1]{Claim \ref{C:#1}}
\newcommand{\rp}[1]{Proposition \ref{P:#1}}
\newcommand{\re}[1]{\ref{E:#1}}
\newcommand{\rco}[1]{Corollary \ref{C:#1}}
\newcommand{\rt}[1] {Theorem \ref{T:#1}}
\newcommand{\rd}[1]{Definition \ref{D:#1}}
\newcommand{\sm}{\smallsetminus}
\newcommand{\Spec}{\operatorname{Spec}}
\newcommand{\Aut}{\operatorname{Aut}}
\newcommand{\Tr}{\operatorname{Tr}}
\newcommand{\on}{\operatorname}
\newcommand{\red}{\operatorname{red}}
\newcommand{\Bl}{\operatorname{Bl}}
\newcommand{\Irr}{\operatorname{Irr}}
\newcommand{\ram}{\operatorname{ram}}
\newcommand{\Id}{\operatorname{Id}}
\newcommand{\gdeg}{\operatorname{gdeg}}
\newcommand{\sL}{{\mathcal L}}
\newcommand{\sO}{{\mathcal O}}
\newcommand{\F}{{\mathbb F}}
\renewcommand{\P}{{\mathbb P}}
\renewcommand{\bar}{\overline}
\def\tilde{\widetilde}
\begin{document}

\title[The Hrushovski--Lang--Weil estimates]{The Hrushovski--Lang--Weil estimates}

\author{K.~V.~Shuddhodan}
\address{Department of Mathematics,
Purdue University, West Lafayette, IN 47907, USA}
\email{kvshud@purdue.edu}

\author{Yakov Varshavsky}
\address{Institute of Mathematics\\
Hebrew University\\
Givat-Ram, Jerusalem,  91904\\
Israel}
\email{vyakov@math.huji.ac.il }

\thanks{The work was supported by
THE ISRAEL SCIENCE FOUNDATION (Grant No. 1017/13)}
\date{\today}


\begin{abstract}
In this work we give a geometric proof of Hrushovski's generalization of the Lang-Weil estimates on the number of
points in the intersection of a correspondence with the graph of Frobenius.
\end{abstract}

\maketitle

\tableofcontents

\section*{Introduction}

In this work we give a purely geometric proof of a generalization of the Lang--Weil estimates \cite[Theorem 1]{LW54} obtained by Hrushovski \cite[Theorem 1.1]{Hr}. In order to formulate it we need to introduce some notations.

\begin{Emp'} \label{E:geom deg}
{\bf Geometric degree.}
Let $k$ be an algebraically closed field. Recall (see \cite[Ex 2.5.2]{Fu}) that to every closed subscheme $X\subseteq\B{P}_k^n$  one can associate its {\em degree} $\on{\deg}(X)$.

(a) For a closed reduced subscheme $X \subseteq\B{P}^n_k$, we denote by $\on{gdeg}(X)$ the sum of the degrees of \textit{all} irreducible components of $X$, and call it the {\em geometric degree} of $X$ (compare \cite{BM93}).

(b) More generally, for a (locally closed) reduced subscheme $X\subseteq\B{P}^n_k$, we denote by $\on{gdeg}(X)$ the minimum $\on{min}\{\on{gdeg}(X_1) + \on{gdeg}(X_2)\}$, taken over all pairs $X_1,X_2$ of closed reduced subschemes of $\B{P}^n_k$ with $X=X_1 \sm X_2$.
\footnote{Note that if $X\subseteq\B{P}^n_k$ is closed, then the geometric degree $\gdeg(X)$ defined in (b) coincides with that of (a).}

(c) For a reduced subscheme  $C$ of $\B{P}^m_X\subseteq\B{P}^m_k \times \B{P}^n_k$, we denote by $\on{gdeg}(C)$ the geometric degree of $C$, viewed as a subscheme of $\B{P}^{mn+m+n}_k$ via the Segre embedding.
\end{Emp'}

\begin{Emp'} \label{set up}
{\bf Set up.} (a) Let $k$ be an algebraically closed field of characteristic $p>0$, $q$ a power of $p$, and $\si_{q}:\on{Spec} k\to \on{Spec} k$ the absolute $q$-th Frobenius automorphism.

(b) Let $X^0\subseteq\B{P}^n_k$ be a locally closed reduced subscheme, $(X^0)^{(q)}$ the base change of $X$ along $\si_{q}$, and
$C^0\subseteq X^0 \times (X^0)^{(q)}$ be a closed reduced subscheme. In particular, $C^0$ is a reduced subscheme of $\B{P}^n_k \times \B{P}^n_k$,
thus we can talk about $\on{gdeg}(C^0)$.

(c) We denote by $c^0=(c^0_1,c^0_2):C^0\hra X^0 \times (X^0)^{(q)}$ the inclusion map.

(d) Let $\on{F}_{X^0/k,q}$ be the induced relative Frobenius map  $X^0\to (X^0)^{(q)}$ over $k$, and denote by $\Gm^0:=\Gamma_{X^0,q}\subseteq X^0 \times (X^0)^{(q)}$ the graph of $\on{F}_{X^0/k,q}$.
\end{Emp'}

Now we are ready to formulate the main result of this work.

\begin{Thm'}\label{T:main}\cite[Theorem 1.1]{Hr} For every pair $n,\dt\in\B{N}$ there exists a constant $M=M(n,\dt)$ satisfying the following properties:

\vskip 4truept
(a) Let $(k,q,X^0,C^0)$ be a quadruple as in \ref{set up} such that $c^0_2$ is quasi-finite,
$\on{gdeg}(X^0)\leq\dt$, $\on{gdeg}{C^0}\leq \dt$ and $q\geq M$.

Then the intersection $(C^0 \cap \Gamma^0)(k)$ is finite, and its cardinality satisfies
\begin{equation} \label{Main1}
\#(C^0 \cap \Gamma^0)(k) \leq M q^{\on{dim}(C^0)}.
\end{equation}

\vskip 4truept

(b) Let $(k,q,X^0,C^0)$ be a quadruple as in (a) such that $C^0$ is irreducible and $c^0_1$ is dominant.

Then the finite number $\#(C^0 \cap \Gamma^0)(k)$ satisfies
\footnote{Our assumptions imply that $X^0$ is irreducible, $\dim(X^0)=\dim(C^0)$, and both $c^0_1$ and $c^0_2$ are generically finite. In particular, we can talk about the degree $\on{deg}(c^0_1)$ of $c^0_1$ and the inseparable degree $\on{deg}(c^0_2)_{\on{insep}}$ of $c^0_2$.}
\begin{equation} \label{Main2}
\left|\#(C^0 \cap \Gamma^0)(k)-\frac{\on{deg}(c^0_1)}{\on{deg}(c^0_2)_{\on{insep}}}q^{\on{dim}(C^0)}\right | \leq M q^{\on{dim}(C^0)-\frac{1}{2}}.
\end{equation}
%
\end{Thm'}

\begin{Emp'}
{\bf Remark.} The result can be slightly generalized. Namely, it suffices to assume that either $c^0_1$ or $c^0_2$  is quasi-finite
in \rt{main}(a) and that either $c^0_1$ is quasi-finite and $c^0_2$ is dominant or
$c^0_1$ is quasi-finite and $c^0_2$ is dominant in \rt{main}(b).
\end{Emp'}

\begin{Emp'}
{\bf A particular case.}
Assume now that $k$ is an algebraic closure $\B{F}$ of a finite field $\fq$. For any scheme $X^0$ over $\fq$, we have a natural isomorphism
$(X^0)^{(q)}\cong X^0$. Then one obtains the following consequence of Theorem \ref{T:main}.
\end{Emp'}

\begin{Cor'} \label{non_uniform_estimates} (\cite[Theorem 1.3]{Shu19a}).
Let $c^0:C^0 \to X^0 \times X^0$ be a morphism of schemes finite type over $k$ such that $X^0$ is defined over $\fq$ and either $c^0_1$ or $c^0_2$ is quasi-finite. Then there exists an integer $M$ such that for every $n \geq M$,

(a) the set $c^{-1}(\Gm_{X^0,q^n})(k)$ is finite and $\#c^{-1}(\Gm_{X^0,q^n})(k) \leq Mq^{n\on{dim}(C^0)}$.

(b) if in addition $C^0$ is irreducible with $c_1$ and $c_2$ dominant, then we have an inequality

\begin{equation*}\label{main_theorem_estimate}
\left |\#(c^0)^{-1}(\Gm_{X^0,q^n})(k)-\frac{\on{deg}(c_1)}{\on{deg}(c_2)_{\on{insep}}}q^{n \on{dim}(C^0)}\right| \leq M q^{n(\on{dim}(C^0)-\frac{1}{2})}.
\end{equation*}


\end{Cor'}

We also have the following very useful consequence of Corollary \ref{non_uniform_estimates} with applications to algebraic dynamics (\cite{Fa},\cite{Amerik}),~group theory (\cite{BS}) and algebraic geometry (\cite{EM},~\cite{Esnault_Srinivas_Bost}).

\begin{Cor'}\label{dense_correspondence}\cite[Corollary 1.2]{Hr}, \cite[Theorem 0.1]{Va}. Let $c^0:C^0 \to X^0 \times X^0$ be a morphism of schemes finite type over $k$ such that $C^0$ is irreducible, $c_1$ and $c_2$ are dominant,
$c_1$ or $c_2$ is quasi-finite, and $X^0$ is defined over $\fq$. Then for every sufficiently large $n$, the preimage
$(c^0)^{-1}(\Gm_{X^0,q^n})(k)$ is non-empty .
\end{Cor'}

\begin{Emp'}
{\bf Our strategy.} To prove the result, we follow the strategy of \cite{Va} rather closely. 
The only (but essential) difference is that unlike \cite{Va}, the refined intersection theory \`a la Fulton is used.
\footnote{An interesting question is whether it is possible to prove our result by extending the method of the proof of
Corollary \ref{non_uniform_estimates} in \cite{Shu19a}. Unlike \cite{Hr} and \cite{Va}, the method of \cite{Shu19a} does not make use of
alterations, and uses techniques from $\ell$-adic cohomology, properties of the intersection complex and theory of weights instead.
One of the obstacles we don't know how to resolve at the moment is how to define a suitable ``norm'' on cohomological correspondences,
compatible with the norm of algebraic cycles, given in Appendix \ref{uniform_Gysin_Appendix}.}
\end{Emp'}

\begin{Emp'}
{\bf Outline of the proof.}
Our proof of \rt{main} goes as follows. Using Noetherian induction, we reduce \rt{main} to \rt{main}(b). Next, replacing $X^0$ by an open subscheme,  we reduce to a particular case of \rt{main}(b), where $X^0$ is smooth, $c^0_2$ decomposes as a composition of a flat universal homeomorphism and an \'etale morphism. In this case, the intersection $C^0\cap\Gm^0$ is finite when $q>\on{deg}(c_2)_{\on{insep}}$, which we assume from now on.

Let $X\subseteq\B{P}^n$ and $C\subseteq X\times X^{(q)}$ be the closures $X^0$ and $C^0$, respectively, and
let $c=(c_1,c_2):C\to X\times X^{(q)}$ be the inclusion map. We say that $\p X:=X\sm X^0$ is {\em locally $c$-invariant},
if every point $x\in X$ has an open neighborhood $U\subseteq X$ such that $c_2^{-1}(\p X^{(q)}\cap U^{(q)})\cap
c_1^{-1}(U)\subseteq c_1^{-1}(\p X\cap U)$. By a {\em twisted} version of a argument of \cite{Va}, we reduce to the situation where
$\p X$ is locally $c$-invariant. Namely, this happens after we replace $X^0$ by an open subscheme and $X$ by a blow-up.

Our main observation that in this case it is possible to write a formula for the cardinality $\#(C^0\cap\Gm^0)(k)$ in cohomological terms.

By a theorem of de Jong \cite{dJ}, there exists an alteration $f:X'\to X$, where
$X'$ is smooth, and the boundary $\p X':=f^{-1}(\p X)_{\red}$ is a union of smooth
divisors $\{X'_i\}_{i\in I}$ with  normal crossings. Mimicking (\cite{Pi}, \cite{Va}), we consider the blowup $\wt{Y}:=\Bl_{\cup_{i\in I}(X'_i\times
(X'_i)^{(q)})}(X'\times X'^{(q)})$, and denote by $\pi:\wt{Y}\to X'\times X'^{(q)}$ the projection map. Let $\Gm\subseteq X'\times X'^{(q)}$
be the graph of $\on{F}_{X'/k,q}$, and denote by $\wt{C}\subseteq\wt{Y}$ and $\wt{\Gm}\subseteq \wt{Y}$
the strict preimages of $C$ and $\Gm$, respectively.

Set $X'^0:=f^{-1}(X^0)\subseteq X'$ and let $g^0:X'^0\times (X'^0)^{(q)}\to X^0\times (X^0)^{(q)}$ be the restriction
of $f\times f^{(q)}$. Then $\pi:\wt{Y}\to Y$ in an isomorphism over $X'^0\times (X'^0)^{(q)}$, therefore
there exists an canonical open embedding $X'^0\times (X'^0)^{(q)}\hra\wt{Y}$, which induces an open embedding
$(g'^{0})^{-1}(C^0)\hra\wt{C}\subseteq\wt{Y}$.

Denote by $[C^0]\in A_d([C^0])$ the class of $C^0$, where $A_d(-)$ denotes the group of cycle classes, let
$\al^0:=(g^0)^*([C^0])\in  A_d((g'^{0})^{-1}(C^0))$ be the refined Gysin pullback of $[C^0]$, and let $\wt{\al}\in  A_d(\wt{Y})$
be the ``closure" of $\al^0$. Since $\wt{Y}$ is smooth of dimension $2d$, we can consider the intersection number $\wt{\al}\cdot[\wt{\Gm}]\in\B{Z}$.

Our strategy is to calculate the intersection number $\wt{\al}\cdot[\wt{\Gm}]$ in two ways: local and global.
Arguing as in \cite{Va}, we see that if $q$ greater than the ramification of $c'_2$ along $\p X'^{(q)}$, then we have
$\wt{C}\cap\wt{\Gm}\subseteq\pi^{-1}(X'^0\times (X'^0)^{(q)})$. Combining this with the refined projection formula and a local calculation,
we get the equality
\begin{equation} \label{Eq:int number}
\wt{\al}\cdot[\wt{\Gm}]=\deg(c^0_2)_{\on{insep}}\deg(f) \#(C^0\cap\Gm^0)(k).
\end{equation}

To get a second expression we proceed as follows, for every subset $J\subseteq I$, we denote by $X'_J$ the intersection
$\cap_{i\in J}X'_i$. In particular, $X'_{\emptyset}=X'$. For every $J\subseteq I$, the cycle class
$\wt{\al}\in A_d(\wt{Y})$ gives rise to the cycle class
$\wt{\al}_J\in A_{d-|J|}(X'_J\times (X'_J)^{(q)})$.

For every prime $\ell\neq\on{char}(k)$ and $i\in\B{Z}$, the class $\wt{\al}_J$
gives rise to a homomorphism $H^i(\wt{\al}_J):H^i(X'_J,\ql)\to H^i((X'_J)^{(q)},\ql)$ between $\ell$-adic cohomology groups,
hence to an endomorphism $\on{F}_{X'_J,q}^*\circ H^i(\wt{\al}_J)$ of $H^i(X'_J,\ql)$.
We then show the following equality 
\begin{equation} \label{Eq:altsum}
\wt{\al}\cdot[\wt{\Gm}]=\sum_{J\subseteq I}(-1)^{|J|}\sum_{i=0}^{2(d-|J|)}(-1)^i\Tr(\on{F}_{X'_J,q}^*\circ H^i(\wt{\al}_J), H^i(X'_J,\ql)).
\end{equation}

It is well-known that each $\Tr(\on{F}_{X'_J,q}^*\circ H^i(\wt{\al}_J))\in\B{Z}$ and is independent of $\ell$.
Combining equalities \form{int number}, \form{altsum} and identity $\Tr(\on{F}_{X',q}^*\circ H^{2d}(\wt{\al}_{\emptyset}))=\deg(c^0_1)\deg(f)$, to show \rt{main} it suffices to show that there exists a constant $N=N(n,\dt)$ such that for every $J\subseteq I$ and $i\in\B{N}$ we have  an inequality $|\Tr(\on{F}_{X'_J,q}^*\circ H^i(\wt{\al}_J))|\leq N q^{i/2}$.

To prove the result, we use a combination of the Deligne's purity theorem and a variant of the argument of \cite{Hr}. Namely, we introduce
a version of ``Hrushovski's norm'' on cycle classes, and show that it is ``submultiplicative". Actually, we deduce the submultiplicativity
from a general claim asserting that the refined Gysin pullback is ``uniformly bounded".
\end{Emp'}


\begin{Emp'}
{\bf Plan of the paper.}
The paper is organized as follows:

In Section 1 we introduce twisted versions of constructions and results of \cite{Va}. Namely,
we study correspondences with locally invariant boundary in subsection 1.1
and a version of Pink's construction in subsection 1.2.

In Section 2 we give a cohomological expression for the number of points in the intersection. Namely, we review definitions, constructions and results from intersection theory and $\ell$-adic cohomology in subsection 2.1\footnote{Note that though we follow Fulton's book \cite{Fu} rather closely, our notation is slightly different.}, introduce our main construction and formulate our main formula in subsection 2.2, and finally prove the formula in subsection 2.3.

In Section 3 we carry out the proof of \rt{main}. Namely, in subsection 3.1 we review ``boundness results'' shown in the appendices; in subsection
3.2 we reduce the result to a particular case of \rt{main}(b), while in subsection 3.3 we deduce the final estimate from the formula for the number of points in the intersection, obtained in Section 2.

We finish the paper by two appendixes in which we show ``boundness results'' used in Section~3. 

In Appendix A we introduce a notion of a bounded collection and show that the notion of ``boundness" is preserved by various standard algebro-geometric operations. In the case of quasi-projective varieties we also relate this notion with a much more naive notion of a boundness of geometric degree\footnote{In particular, we give an elementary proof of theorem of Kleiman, following a suggestion of Hrushovski.}. 

In Appendix B we introduce a norm on the Chow groups of quasi-projective schemes and study its properties.
Namely, we show that various standard operations, that is, proper pushforwards, smooth pullbacks and refined Gysin pullbacks are
``uniformly bounded", and deduce bounds on the spectral radius. We would like to stress that though our notion of a norm is much
more ``naive'' than the one used by Hrushovski, our results are strongly motivated by his results. In particular,
the last subsection  is almost identical to the corresponding part of \cite{Hr}.

Note that though results in Appendix A seem to be well known to experts, some of the results of Appendix B could be of independent interest.
\end{Emp'}

\begin{Emp'}
{\bf Acknowledgements.}
This work would not be possible without a masterpiece \cite{Hr} of Ehud Hrushovski. We would like also to thank him for enlightening conversations over the years. In addition, the idea of the proof of \rt{boundedness_degree} belongs to him. We also thank H\'el\`ene Esnault, Mark Sapir, and Luc Illusie for their interest and stimulating conversations. KVS would also like to thank Vasudevan Srinivas for his constant support and helpful discussions, especially around Kleiman's boundedness theorem.

This work has been conceived when the first named author visited the Hebrew University of Jerusalem and has beed supported by the ISF grant 1017/13.
\end{Emp'}

\begin{Emp'}
{\bf Conventions.} In this work $k$ will be always an algebraically closed field,
all schemes over $k$ will be assumed to be of finite type. By a variety we mean an integral scheme (of finite type) over $k$.
For two schemes $X,Y$ over $k$ we will write $X\times Y$ instead of $X\times_k Y$. For a scheme $X$ over $k$ and an automorphism $\si$ of $k$ we denote by ${}^{\si}X$ the base change of $X$ along $\sigma:\Spec k\to \Spec k$.
\end{Emp'}


\section{Twisted versions of results of \cite{Va}}
Let $k$ be an algebraically closed  field, and let $\si\in\Aut(k)$ be an automorphism.

\subsection{Correspondences with locally invariant boundary}

In this subsection we recall twisted versions of the
constructions and results of \cite{Va}, where the case $\si=\Id$ is treated.

\begin{Not} \label{N:cor}
Let $k$ be an algebraically closed  field. 

(a) By a {\em correspondence}, we mean a morphism of schemes of the form $c=(c_1,c_2):C\to X\times {}^{\si}X$, where
$X$ and $C$ are schemes of finite type over $k$.

(b) For a correspondence $c:C\to X\times {}^{\si}X$ and an open subscheme
$U\subseteq X$, we denote by
$c|_U:c_1^{-1}(U)\cap c_2^{-1}({}^{\si}U)\to U\times {}^{\si}U$ the
restrictions of $c$.

(c) Let $c:C\to X\times X$ and $\wt{c}:\wt{C}\to\wt{X}\times{}^{\si}\wt{X}$ be
two correspondences. By a {\em morphism} from $\wt{c}$ to $c$,
we mean a pair of morphisms $[f]=(f,f_C)$, making
the following diagram commutative
\begin{equation} \label{Eq:funct}
\CD
        \wt{X}  @<{\wt{c}_1}<<   \wt{C}       @>{\wt{c}_2}>>        {}^{\si} \wt{X}\\
        @V{f}VV                        @V{f_C}VV                       @VV{}^{\si}{f}V\\
        X @<{c_1}<<                   C    @>{c_2}>>           {}^{\si}X.
\endCD
\end{equation}

(d) Suppose that we are given correspondences $\wt{c}$ and $c$ as in (c) and a morphism $f:\wt{X}\to X$.
We say that $\wt{c}$ {\em lifts} $c$, if there exists a morphism $f_C:\wt{C}\to C$ such that
$[f]=(f,f_C)$ is a morphism from $\wt{c}$ to $c$.
\end{Not}


\begin{Def} \label{D:inv}
Let $c:Y\to X\times {}^{\si}X$ be a correspondence, and let $Z\subseteq X$ be a
closed subset, viewed as a closed reduced subscheme.

(a) We say that $Z$ is {\em $c$-invariant}, if $c_1(c_2^{-1}({}^{\si}Z))$
is set-theoretically contained in $Z$.


(b) We say that $Z$ is {\em locally $c$-invariant}, if for every point
$x\in Z$ there exists an open neighborhood $U\subseteq X$ of $x$
such that $Z\cap U\subseteq U$ is
$c|_U$-invariant.
\end{Def}



\begin{Lem} \label{L:locinv}
Let $[f]=(f,f_C)$ be a morphism from a correspondence  $\wt{c}:\wt{C}\to\wt{X}\times{}^{\si}\wt{X}$ to
$c:C\to X\times {}^{\si}X$. If $Z\subseteq X$ is a locally $c$-invariant
closed subset, then $f^{-1}(Z)_{\red}\subseteq\wt{X}$ is locally $\wt{c}$-invariant.


\end{Lem}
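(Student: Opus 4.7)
The statement is purely set-theoretic, so the plan is to do a diagram chase on the commutative square \eqref{Eq:funct}, reducing local $\wt c$-invariance of $\tilde Z:=f^{-1}(Z)_{\red}$ around any point $\tilde x\in\tilde Z$ to local $c$-invariance of $Z$ around the image point $x:=f(\tilde x)\in Z$.

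Concretely, fix $\tilde x\in\tilde Z$ and set $x:=f(\tilde x)\in Z$. By local $c$-invariance of $Z$, choose an open neighborhood $U\subseteq X$ of $x$ such that $Z\cap U$ is $c|_U$-invariant. I will show that the open neighborhood $\wt U:=f^{-1}(U)\subseteq\wt X$ of $\tilde x$ witnesses local $\wt c$-invariance of $\tilde Z$ at $\tilde x$, i.e.\ that $\tilde Z\cap\wt U$ is $\wt c|_{\wt U}$-invariant.

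To this end, take any $\tilde y\in\wt c_1^{-1}(\wt U)\cap \wt c_2^{-1}({}^{\si}\wt U)\cap\wt c_2^{-1}({}^{\si}\tilde Z)$. Commutativity of \eqref{Eq:funct} gives $c_1(f_C(\tilde y))=f(\wt c_1(\tilde y))\in f(\wt U)\subseteq U$ and $c_2(f_C(\tilde y))={}^{\si}f(\wt c_2(\tilde y))$. Since $\tilde Z\subseteq f^{-1}(Z)$, we have ${}^{\si}\tilde Z\subseteq({}^{\si}f)^{-1}({}^{\si}Z)$, so ${}^{\si}f(\wt c_2(\tilde y))\in{}^{\si}Z$, and also ${}^{\si}f(\wt c_2(\tilde y))\in{}^{\si}U$. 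Hence $f_C(\tilde y)\in c_1^{-1}(U)\cap c_2^{-1}({}^{\si}U)\cap c_2^{-1}({}^{\si}Z)$, and $c|_U$-invariance of $Z\cap U$ forces $c_1(f_C(\tilde y))\in Z\cap U$. Using commutativity once more, $f(\wt c_1(\tilde y))=c_1(f_C(\tilde y))\in Z$, so $\wt c_1(\tilde y)\in f^{-1}(Z)\cap\wt U$, which set-theoretically is $\tilde Z\cap\wt U$, as required.

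There is no genuine obstacle here; the only things to keep straight are the twists by $\si$ (so that one applies ${}^{\si}f$ on the right column of \eqref{Eq:funct}) and the fact that local $c$-invariance is a condition on the \emph{restriction} $c|_U$, which is why one must also check that $\wt c_2(\tilde y)\in{}^{\si}\wt U$ and $\wt c_1(\tilde y)\in\wt U$ before invoking it — both of which follow from $\tilde y\in\wt c_1^{-1}(\wt U)\cap\wt c_2^{-1}({}^{\si}\wt U)$ and the commutative square.
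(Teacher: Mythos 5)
Your proof is correct, and it is the standard diagram chase on the commutative square \eqref{Eq:funct}, which is precisely what the paper appeals to when it says to repeat the argument of \cite[Lem 1.3]{Va}; you have simply written out in full (with the $\si$-twists tracked carefully) the argument the paper defers to by citation.
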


\begin{proof}
Repeat the argument of \cite[Lem 1.3]{Va}, where the case $\si=\Id$ is shown.
\end{proof}

\begin{Not} \label{N:fg}
Let $c:Y\to X\times {}^{\si}X$ be a correspondence, and $Z\subseteq X$ a
closed subset. Following \cite{Va}, to this data we associate closed subsets
\begin{equation} \label{Eq:f}
F(c,Z):=c_2^{-1}({}^{\si}Z)\cap c_1^{-1}(X\sm Z)\subseteq Y,
\end{equation}
\begin{equation} \label{Eq:g}
G(c,Z):=\cup_{S\in \Irr(F(c,Z))}[\ov{c_1(S)}\cap {}^{\si^{-1}}\ov{c_2(S)}]\subseteq X,
\end{equation}
where $\Irr(F(c,Z))$ denotes the set of (reduced) irreducible components of $F(c,Z)$, and
$\ov{c_1(S)}\subseteq X$ and $\ov{c_2(S)}\subseteq {}^{\si}X$ are the closures of $c_i(S)$.
\end{Not}

\begin{Emp} \label{E:dim}
{\bf Remark.} Arguing as in \cite[Remark 1.5(a) and Cor. 1.7(a)]{Va} one sees that a closed subset $Z\subseteq X$ is $c$-invariant
(resp. locally $c$-invariant) if and only if $F(c,Z)=\emptyset$ (resp. $G(c,Z)=\emptyset$).
\end{Emp}

\begin{Emp}
{\bf Notation.}
For a closed subscheme $Z\subseteq X$, we denote by  $\C{I}_{Z,X}\subseteq\C{O}_X$ the sheaf of ideals of $Z$.
\end{Emp}

The following result is a twisted variant of \cite[Prop 2.3]{Va}.

\begin{Prop} \label{P:blowup}
Let  $c:C\to X\times {}^{\si}X$ be a correspondence over $k$
such that $X$ and $C$ are irreducible, $c_2$ is dominant quasi-finite. Let $X^0 \subseteq X$ be any non-empty open subset of $X$.

Then there exists a non-empty open subset $V\subseteq X^0$ and a blow-up $\pi:\wt{X}\to X$, which is an
isomorphism over $V$, such that for every correspondence $\wt{c}:\wt{C}\to \wt{X}\times\wt{X}$ lifting $c$, the closed
subset $\wt{X}\sm \pi^{-1}(V)\subseteq \wt{X}$ is locally $\wt{c}$-invariant.
\end{Prop}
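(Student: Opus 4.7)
I would follow the strategy of \cite[Prop.~2.3]{Va} almost verbatim, since $\si$ is an isomorphism of $\Spec k$ and does not interact with dimension theory, scheme-theoretic operations, or the formation of blow-ups. The only new bookkeeping is that whenever the untwisted argument uses $c_2^{-1}(Z)$ one must use $c_2^{-1}({}^\si Z)$, and the closures of $c_2(S)$ are transported by $\si^{-1}$ before being intersected with $\ov{c_1(S)}$. In a first step I would shrink $X^0$ to an open $V\subseteq X^0$ on which the correspondence is tame: because $c_2$ is dominant and quasi-finite and $X,C$ are irreducible of the same dimension, $c_1$ is generically finite, so by shrinking I can arrange that both $c_1$ and $c_2$ become finite over $V$ and ${}^\si V$ respectively. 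Setting $Z:=X\sm V$, I enumerate the irreducible components $\{S_\al\}$ of $F(c,Z)\subseteq C$ and the associated closed subsets $A_\al:=\ov{c_1(S_\al)}$ and $B_\al:={}^{\si^{-1}}\ov{c_2(S_\al)}$ of $X$; automatically $B_\al\subseteq Z$, while $A_\al$ is the closure of a subset of $X\sm Z$ that may meet $Z$ along its boundary.

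Second, I would construct $\pi:\wt X\to X$ as the blow-up of a suitable ideal $\sI\subseteq\C{O}_X$ supported in $Z$, whose effect is to separate each pair $(A_\al,B_\al)$ in $\wt X$, in the sense that the strict transforms $\wt A_\al$ and $\wt B_\al$ become disjoint. Since the center lies in $Z$, $\pi$ is an isomorphism over $V$.

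Third, I would verify local invariance: for any lift $\wt c:\wt C\to\wt X\times{}^\si\wt X$ of $c$, the goal is to show $G(\wt c,\wt X\sm \pi^{-1}(V))=\emp$, which by Remark \ref{E:dim} is equivalent to local $\wt c$-invariance of $\wt X\sm \pi^{-1}(V)$. Any irreducible component $\wt S$ of $F(\wt c,\wt X\sm \pi^{-1}(V))$ maps under $f_C$ into some component $S_\al$ of $F(c,Z)$, so $\ov{\wt c_1(\wt S)}$ lies in the strict transform $\wt A_\al$ and ${}^{\si^{-1}}\ov{\wt c_2(\wt S)}$ lies in $\wt B_\al$; by the separation property these are disjoint, hence $G(\wt c,\wt X\sm \pi^{-1}(V))=\emp$.

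The main obstacle is the precise construction of the ideal $\sI$ and the verification that the strict transforms are separated after blowing up --- this is the core technical content of \cite[Prop.~2.3]{Va} in the untwisted case, and in the twisted setting it is really only a matter of tracking the $\si$ and $\si^{-1}$ twists through the argument. A secondary subtlety is ensuring that $\ov{\wt c_1(\wt S)}$ lands in the strict transform $\wt A_\al$ rather than being absorbed into the exceptional divisor of $\pi$; this follows from the tameness arrangements made in Step~1 together with \rl{locinv} applied to the morphism of correspondences $[\pi]=(\pi,f_C)$ from $\wt c$ to $c$.
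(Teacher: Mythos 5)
Your overall strategy matches the paper's: both regard \rp{blowup} as a straightforward twist of \cite[Prop.~2.3]{Va}, build $V$ by shrinking, blow up along the pairwise intersections $Z_{S}=\ov{c_1(S)}\cap{}^{\si^{-1}}\ov{c_2(S)}$ for $S\in\Irr(F(c,X\sm V))$, and then argue that separating the strict transforms forces $G(\wt c,\wt X\sm\pi^{-1}(V))=\emp$. The paper in fact records exactly the ideal you leave unspecified, $\C{I}_{Z,X}=\prod_{S}\C{I}_{Z_S,X}$, and also offers a cleaner shortcut in a footnote: the arguments of \cite[Lem~2.2, Prop.~2.3]{Va} never use $k$-linearity, so one can simply apply them to the (non-$k$-linear) correspondence $X\xleftarrow{c_1}C\xrightarrow{\si\circ c_2}X$ with no further bookkeeping.

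There are, however, two genuine gaps in your write-up. First, your Step~1 replaces the shrinking of \cite[Lem~2.2]{Va} by a single application of generic finiteness ("make $c_1,c_2$ finite over $V$ and ${}^\si V$"). The paper reproduces the actual construction: an inductive recursion $V_j:=U_j\sm\ov{c_1(c_2^{-1}({}^{\si}X\sm{}^{\si}U_j))}$, $Z_j:=G(c|_{U_j},U_j\sm V_j)$, $U_{j+1}:=U_j\sm Z_j$, run for $\dim X$ steps, with $V:=V_{\dim X}$. This iterated shrinking is not cosmetic; it is precisely what makes the blow-up verification close up, and a one-shot shrinking to finiteness is not an adequate substitute. (Also, $c_1$ generically finite requires $c_1$ dominant, which is not assumed here; if $c_1$ is not dominant the statement is essentially vacuous over a suitable $V$, but your phrasing glosses over this.)

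Second, the verification in Step~3 has a concrete hole. You claim that for a component $\wt S$ of $F(\wt c,\wt X\sm\pi^{-1}(V))$, the closure ${}^{\si^{-1}}\ov{\wt c_2(\wt S)}$ lands in the strict transform $\wt B_\al$. But $\wt c_2(\wt S)\subseteq{}^\si(\wt X\sm\pi^{-1}(V))$ \emph{entirely}, and $\wt X\sm\pi^{-1}(V)$ contains the exceptional divisor $E$ of $\pi$; so ${}^{\si^{-1}}\ov{\wt c_2(\wt S)}$ can perfectly well be absorbed into $E$ rather than into $\wt B_\al$, and nothing you say rules this out. Note that the worry you raise is aimed at the wrong factor: for $\wt c_1$ the issue is automatic, since $\wt c_1(\wt S)$ is generically in $\pi^{-1}(V)$ and hence off $E$, so $\ov{\wt c_1(\wt S)}$ does lie in $\wt A_\al$. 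It is the $\wt c_2$ side that is delicate, and handling it is exactly where the iterative choice of $V$ (or the footnote's untwisted shortcut) is needed. Finally, the invocation of \rl{locinv} applied to $[\pi]=(\pi,f_C)$ is not available here: that lemma presupposes that the closed subset of the target is already locally $c$-invariant, and $X\sm V$ is precisely \emph{not} (locally) $c$-invariant --- otherwise no blow-up would be required.
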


\begin{proof}
The proof is a twisted analog of that of \cite[Lem 2.2 and Prop 2.3]{Va}.

We set $U_0:=X^0$, and by induction, we define for every $j\geq 0$ open subsets
$V_j\subseteq U_j\subseteq X^0$ by the rules
\begin{equation} \label{Eq:vj}
V_j:=U_j\sm \ov{c_1(c_2^{-1}({}^{\si}X\sm {}^{\si} U_j))},
\end{equation}
\begin{equation} \label{Eq:zj}
Z_{j}:=G(c|_{U_{j}},U_{j}\sm V_{j}), \text{ and }U_{j+1}:=
U_{j}\sm Z_{j}\subseteq U_j.
\end{equation}

Arguing as in \cite[Lem 2.2]{Va} we see that each $V_j$ is non-empty and set $V:=V_{\dim X}$ and
$F:=F(c,X\sm V)=c_2^{-1}({}^{\si}X\sm {}^{\si}V)\cap c_1^{-1}(V)$.

For every $S\in\Irr(F)$, we denote by $Z_S:=\ov{c_1(S)}\cap {}^{\si^{-1}}\ov{c_2(S)}\subseteq X$ the schematic intersection,
let $Z\subseteq X$ be a closed subscheme such that  the sheaf of ideals
$\C{I}_{Z,X}\subseteq\C{O}_X$ equals the product $\C{I}_{Z,X}=\prod_{S\in \Irr(F)}\C{I}_{Z_S,X}\subseteq\C{O}_X$,
let $\wt{X}$ be the blow-up $\Bl_{Z}(X)$, and denote by
$\pi:\wt{X}\to X$ the canonical projection.

Then the argument of \cite[Prop 2.3]{Va} and \cite[Lem 2.2]{Va} implies that $V$ and $\pi$ satisfy the required properties.
\footnote{Alternatively, one can observe that in the arguments of \cite[Lem 2.2 and Prop 2.3]{Va} one does not use the assumption that
the morphisms $c_1$ and $c_2$ are $k$-linear. Therefore our assertion follows from the proof \cite[Lem 2.2 and Prop 2.3]{Va} in the case of correspondence $X\overset{c_1}{\lla}C\overset{c'_2}{\lra}X$, where $c'_2$ is the composition $C\overset{c_2}{\lra}{}^{\si}X\overset{\si}{\lra}X$.}
\end{proof}

\subsection{A version of Pink's construction}

\begin{Emp} \label{E:pink}
{\bf The construction.}
(a) Let $X'$ be a smooth scheme of relative dimension $d$ over a field $k$, and let $\{X'_{i}\}_{i\in I}$ be a
finite collection of smooth divisors with normal crossings.
We set $\p X':= \cup_{i\in I}X'_{i}\subseteq X'$ and $X'^0:=X'\sm \p X'$.

(b) For every $J\subseteq I$, we set $X'_J:=\cap_{i\in J} X'_{i}$. In particular, $X'_{\emptyset}=X'$.
Then every $X'_{J}$ is either empty, or smooth over $k$ of relative dimension
$d-|J|$. We also set $\p X'_J:=\cup_{i\in I\sm J}X'_{J\cup\{i\}}$, and $(X'_J)^0:=X'_J\sm \p X'_J$.

(c) For every $i\in I$, we set $Y_i:=X'_i\times {}^{\si}(X'_i)$. We also set $Y:=X'\times {}^{\si}(X')$, $\p Y:=\cup_{i\in I} Y_i$,
$Y^0:=Y\sm \p Y$. In particular, we have $X'^0\times {}^{\si}(X'^0)\subseteq Y^0$.

(d) We denote by $\C{K}:=\prod_{i\in I}\C{I}_{Y_i,Y}\subseteq\C{O}_Y$ the product of the sheaves of ideals of $Y_i$, let $\wt{Y}$ be the blow-up $\Bl_{\C{K}}(Y)$, and let $\pi:\wt{Y}\to Y$ be the projection map.

(e) By construction, $\pi$ is an isomorphism over $Y^0$, hence over $X'^0\times {}^{\si}(X'^0)$.
Therefore we have an open embedding $j:X'^0\times {}^{\si}(X'^0)\simeq \pi^{-1}(X'^0\times {}^{\si}(X'^0))\hra \wt{Y}$.

(f) For every  $J\subseteq I$, we set $Y_J:=X'_J\times {}^{\si}(X'_J)\subseteq Y$ and
$E_J:=\pi^{-1}(Y_J)\subseteq \wt{Y}$, denote by $i_J$ the inclusion
$E_J\hra\wt{Y}$, and by $\pi_J$ the projection $E_J\to Y_{J}$.

\end{Emp}

Following Lemma is an analogue of \cite[Lem 3.4]{Va} and the proof there goes over verbatim.

\begin{Lem} \label{L:smooth pink}
In the situation of \re{pink}, for every $J\subseteq I$ the closed subscheme
$E_J\subseteq\wt{Y}$ is smooth of dimension $2d-|J|$.
\end{Lem}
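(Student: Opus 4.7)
The statement is local on $\wt Y$, so fix a point $\wt y\in \wt Y$ with image $y=\pi(\wt y)\in Y$, and let $K\subseteq I$ be the set of indices $i$ with $y\in Y_i$. Since $X'$ is smooth and $\{X'_i\}_{i\in I}$ has normal crossings, one can pick \'etale local coordinates $x_1,\ldots,x_d$ on $X'$ near the image of $y$ so that $X'_i=\{x_i=0\}$ for $i\in K$ (after relabeling) and $x_j$ is a unit at the base point for $j\notin K$; similarly pick coordinates $y_1,\ldots,y_d$ on ${}^{\si}X'$. In the resulting chart on $Y$, one has $\C{I}_{Y_i,Y}=(x_i,y_i)$ for $i\in K$ and $\C{I}_{Y_i,Y}=(1)$ for $i\notin K$, so $\C{K}=\prod_{i\in K}(x_i,y_i)$.

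The key local fact is that $\wt Y$ is canonically identified with the fiber product $\prod_{Y,\,i\in K}\Bl_{Y_i}(Y)$. The map from the fiber product to $\wt Y$ comes from the universal property of the blow-up, since the pullback of $\C{K}$ is a product of invertible sheaves, hence invertible. For the opposite direction, in any local chart of $\wt Y$ in which a particular generator of $\C{K}$ (for instance $\prod_{i\in K}x_i$) dominates, the divisibility relations force each $(x_i,y_i)$ to be principal and thus invertible on $\wt Y$, yielding maps $\wt Y\to \Bl_{Y_i}(Y)$ for each $i\in K$ and hence a map to the fiber product inverse to the first one. Each $\Bl_{Y_i}(Y)$ is smooth of dimension $2d$ (blow-up of a smooth variety along a smooth codimension-$2$ subvariety), and because distinct factors modify disjoint coordinate pairs, the fiber product is smooth of dimension $2d$ with explicit charts; e.g.\ on the chart with $x_i=y_iu_i$ for all $i\in K$, the coordinates are $(y_i,u_i)_{i\in K}$ together with the $(x_j,y_j)_{j\notin K}$.

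In this chart the exceptional divisor $E_i=\pi^{-1}(Y_i)$ for $i\in K$ is cut out by $y_i$, and the $\{E_i\}_{i\in K}$ meet transversally because they involve mutually independent coordinate functions. If $\wt y\in E_J$ then $y\in Y_J$, forcing $J\subseteq K$; furthermore the scheme-theoretic preimage $E_J=\pi^{-1}(Y_J)$ is cut out by the pullback of the ideal $\C{I}_{Y_J,Y}=(x_j,y_j)_{j\in J}$, which in our chart becomes $(y_j)_{j\in J}$. Hence $E_J=\bigcap_{i\in J}E_i$ is a transverse intersection of $|J|$ smooth divisors, therefore smooth of codimension $|J|$ in $\wt Y$, i.e., of dimension $2d-|J|$.

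The main obstacle is the local identification of $\wt Y$ with the fiber product of the $\Bl_{Y_i}(Y)$; it amounts to verifying that the ideals $(x_i,y_i)$ for $i\in K$, generated by disjoint coordinate pairs, act as independent blow-up centers in the strong sense that the blow-up of their product is the iterated (equivalently, fibered) blow-up. This is precisely the content of the local calculation carried out in the proof of \cite[Lem 3.4]{Va}, and the twist by $\si$ plays no role in it, since $\si$ acts only on the base field and leaves the local coordinate description of the blow-up unchanged.
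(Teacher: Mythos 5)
Your proof is correct and takes the same route as the paper, which simply observes that the local computation in \cite[Lem.~3.4]{Va} --- identifying $\wt Y$ locally as the fiber product over $Y$ of the blow-ups $\Bl_{Y_i}(Y)$, exactly as you spell out --- carries over verbatim once one notes (as you do) that the $\si$-twist plays no role in this local statement. (A minor nit: for $j\notin K$ the reason $\C{I}_{Y_j,Y}$ is the unit ideal near $y$ is simply that $y\notin Y_j$, so at least one of $x_j,y_j$ is a unit at $y$; $x_j$ by itself need not be a unit at the base point, since $y$ may lie over a point of $X'_j$ even when $j\notin K$.)
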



\begin{Emp} \label{E:prpreim}
{\bf Notation.} For every morphism $c'=(c'_1,c'_2):C'\to Y=X\times X^{(q)}$, we denote by
$\wt{c}:\wt{C}\to\wt{Y}$ the {\em strict preimage} of $c'$. Explicitly,
$\wt{C}$ is the schematic closure of $c'^{-1}(Y^0)\times_Y
\wt{Y}$ in $C'\times_Y\wt{Y}$. Notice that $\wt{c}$ is a closed embedding (resp. finite),  if $c'$ is such.
\end{Emp}

\begin{Emp} \label{E:pinkfq}
{\bf Set-up.} In the situation of \re{pink}, assume that $k$ is an algebraically closed field of characteristic $p$, $q$ a power of $p$, and
$\si=\si_q$, the $q^{\mathrm{th}}$ Frobenius automorphism, $\si_q(x)=x^q$ for all $x\in k$.

As in the introduction, for a scheme $X$ over $k$, we write $X^{(q)}$ instead of ${}^{\si_q}X$, denote by $\on{F}_{X,q}=\on{F}_{X/k,q}:X\to X^{(q)}$ the geometric Frobenius morphism, and by $\Gm_{X,q}\subseteq X\times X^{(q)}$ the graph of $\on{F}_{X/k,q}$.
\end{Emp}

\begin{Emp} \label{E:ramification}
{\bf The ramification degree.}
(a) Recall that for a Noetherian scheme $X$ the sheaf of ideal $\C{I}_{X_{\red},X}\subseteq\C{O}_X$ is nilpotent. By a {\em thickness} of $X$,
we mean the smallest $r\in\B{N}$ such that $(\C{I}_{X_{\red},X})^r=0$.

(b) Let $f:Y\to X$ be a morphism of Noetherian schemes, and let $Z$ be a closed subset of $X$,  or, what is the same, a closed reduced subscheme.
We denote by  $\ram(f,Z)$ the thickness of the schematic preimage $f^{-1}(Z)\subseteq Y$. 

(c) For a morphism $f:Y\to X$ of Noetherian schemes we denote $\ram(f)\in\B{N}\cup\{\infty\}$ the supremum $\sup_{x}\ram(f,x)$, taken over all closed points $x\in X$. Actually, $\ram(f)$ is always finite (see \rl{ram}).
\end{Emp}

The following result is a slight generalization of \cite[Lem 3.7]{Va}.

\begin{Lem} \label{L:trans}
Let $c':C'\to X'\times X'^{(q)}$ be a correspondence such that the closed subset $\p X'\subseteq X'$ is locally $c'$-invariant and $q>\on{ram}(c'_2,\p X'^{(q)})$, let $\wt{\Gm}\subseteq \wt{Y}$ be the strict preimage of $\Gm:=\Gm_{X',q}$. Then  $\wt{c}(\wt{C})\cap\wt{\Gm}\subseteq\pi^{-1}(X'^0\times (X'^0)^{(q)})$.
\end{Lem}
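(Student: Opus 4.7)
The plan is to argue by contradiction, using the valuative criterion and a local multiplicity analysis to exploit the tension between the ramification hypothesis and the Frobenius equations defining $\tilde\Gamma$.

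Suppose to the contrary that some $\tilde z\in\tilde c(\tilde C)\cap\tilde\Gamma$ has $\pi(\tilde z)\notin X'^{0}\times(X'^{0})^{(q)}$. Since $\tilde\Gamma$ is the schematic closure of $\Gamma\cap Y^{0}=\{(x,F_{X',q}(x)):x\in X'^{0}\}$ in $\Gamma\times_Y\tilde Y$, the point $\pi(\tilde z)$ lies in $\Gamma$, necessarily of the form $(x,F_{X',q}(x))$; the hypothesis then forces $x\in\partial X'$, say $x\in X'_{i_1}\cap\cdots\cap X'_{i_r}$ but $x\notin X'_i$ for the other indices. Pick a preimage $\tilde z'\in\tilde C$ of $\tilde z$ and let $z'\in C'$ be its image in $C'$, so that $c'(z')=(x,F(x))$. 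Because $\tilde C$ is the schematic closure of $c'^{-1}(Y^{0})$ inside $C'\times_Y\tilde Y$, there is a DVR $R$ with uniformizer $\varpi$ and a morphism $\operatorname{Spec}R\to\tilde C$ sending the closed point to $\tilde z'$ and the generic point into $c'^{-1}(Y^{0})$.

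Work in local regular parameters $x_{1},\ldots,x_{d}$ on $X'$ at $x$ with $X'_{i_j}=V(x_{j})$ for $j\leq r$, and corresponding parameters $y_{1},\ldots,y_{d}$ on $X'^{(q)}$ at $F(x)$ with $F_{X',q}^{*}y_{j}=x_{j}^{q}$. Put $g_{j}=(c'_{1})^{*}x_{j}$, $f_{j}=(c'_{2})^{*}y_{j}\in\mathcal O_{C',z'}$, and let $a_{j}:=\operatorname{ord}_{\varpi}(g_{j})$, $b_{j}:=\operatorname{ord}_{\varpi}(f_{j})$, which are positive integers for $j\leq r$ and finite (because the generic fiber lands in $c'^{-1}(Y^{0})$). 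Local $c'$-invariance of $\partial X'$ gives the set-theoretic inclusion $V(f_{1}\cdots f_{r})\subseteq V(g_{1}\cdots g_{r})$ near $z'$, so by Nullstellensatz $g_{1}\cdots g_{r}\in\sqrt{(f_{1}\cdots f_{r})}$. Combining with the thickness bound $\sqrt{(f_{1}\cdots f_{r})}^{\,q-1}\subseteq(f_{1}\cdots f_{r})$---valid because $\operatorname{ram}(c'_{2},\partial X'^{(q)})<q$---yields $(g_{1}\cdots g_{r})^{q-1}\in(f_{1}\cdots f_{r})$, whence, pulled back along the trait,
$$(q-1)\sum_{j\leq r}a_{j}\;\geq\;\sum_{j\leq r}b_{j}. $$

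Finally, exploit the local structure of $\pi:\tilde Y\to Y$, which is the blow-up of $Y$ along the product ideal $\prod_{i}(x_{i},y_{i})$ and is naturally realized as an iterated blow-up along the proper transforms of the $Y_{i}$. In suitable charts around $\tilde z$, where the new coordinates are $t_{j}=y_{j}/x_{j}$, the strict preimage $\tilde\Gamma$ is cut out by $t_{j}=x_{j}^{q-1}$, reflecting the equation $y_{j}=x_{j}^{q}$ on $\Gamma$. Membership of the trait's limit in $\tilde\Gamma$ translates, via the valuations of $t_{j}=f_{j}/g_{j}$ versus $x_{j}^{q-1}=g_{j}^{q-1}$ along the trait, into the opposite inequality $\sum_{j\leq r}b_{j}\geq q\sum_{j\leq r}a_{j}$. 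Combined with the upper bound above, this forces $\sum_{j\leq r}a_{j}\leq 0$, contradicting $a_{j}\geq 1$. The main obstacle is the third step: in the multi-divisor case $r\geq 2$, one has to identify carefully which chart of the iterated blow-up $\tilde Y$ contains $\tilde z$ and extract the sharp Frobenius inequality $\sum b_{j}\geq q\sum a_{j}$; in the single-divisor case this is an immediate computation in the standard chart $y_{j}=x_{j}t_{j}$, but the general case requires an induction on $r$ tracking the sequence of blow-ups along the $Y_{i}$.
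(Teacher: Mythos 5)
The paper's own proof is very short: it observes that the statement is local on $X'$ in the \'etale topology, reduces to the case $X'=\mathbb{A}^n$ with $X'_i=Z(x_i)$, and notes that in this case $X'$ and the $X'_i$ are defined over $\mathbb{F}_q$, so that the twisted statement coincides with the untwisted one already proved in \cite[Lemma 3.7]{Va}. Your proposal instead attempts a direct computational argument from scratch, so the two routes are genuinely different.

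Unfortunately, the final step of your proposal contains a real gap, and one that you partially flag yourself. You produce a trait $\operatorname{Spec}R\to\tilde C$ whose generic point lies over $Y^0$ and whose closed point maps to $\tilde z'$ over $\tilde z$, and you correctly extract $(q-1)\sum a_j\geq\sum b_j$ from local invariance together with the thickness bound. You then assert that ``membership of the trait's limit in $\tilde\Gamma$'' yields $\sum b_j\geq q\sum a_j$ by comparing the valuations of $t_j=f_j/g_j$ and $x_j^{q-1}=g_j^{q-1}$ along the trait. This does not follow: the trait lies in $\tilde C$, not in $\tilde\Gamma$, so the defining equation $t_j=x_j^{q-1}$ of $\tilde\Gamma$ simply does not hold along the trait. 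The only consequence of $\tilde z\in\tilde\Gamma$ is that $t_j$ vanishes at the closed point (since on $\tilde\Gamma$ one has $t_j=x_j^{q-1}$ and $x_j(\tilde z)=0$), which gives $b_j-a_j\geq 1$, i.e.\ $b_j>a_j$, not $b_j\geq q a_j$. Combined with $(q-1)\sum a_j\geq\sum b_j$ this only yields $(q-2)\sum a_j\geq 1$, which is not a contradiction once $q\geq 3$. So the intended contradiction does not materialize, even in the $r=1$ case that you describe as ``an immediate computation''; the difficulty you flag for $r\geq 2$ is already present for $r=1$. To make a direct argument of this type work, one would need information that is genuinely global (or at least not visible from a single trait through $\tilde C$), and this is precisely what the reduction to the $\mathbb{F}_q$-rational model in the paper's proof, and the argument of \cite[Lemma 3.7]{Va} that it invokes, supply.
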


\begin{proof}
 Note that the assertion is local on $X'$ in the \'etale topology. Moreover, as observed in the proof of \cite[Lemma 3.7]{Va} we may assume that $X'=\B{A}^n$ and $X'_i=Z(x_i)$. In this case, $X'$ and $X'_i$ are defined over $\fq$, thus the assertion is proven in \cite[Lemma 3.7]{Va}.
\end{proof}

The following Lemma is a natural generalization of \cite[Lemma 3.9]{Va} and its proof there carries over without any changes.

\begin{Lem} \label{L:grfrob}
In the situation of \re{pinkfq}, for every $J\subseteq I$, we set $\Gm_J:=\Gm_{X'_J,q}\subseteq X'_J\times (X'_J)^{(q)}$
and $\Gm^0_J:=\Gm_{X'_J,q}\subseteq X^0_J\times (X_J^0)^{(q)}$.

Then the schematic closure $\ov{\pi_J^{-1}(\Gm^0_J)}\subseteq E_J$ is smooth of dimension $d$, and the
schematic preimage $\pi_J^{-1}(\Gm_J)\subseteq E_J$ is a schematic
union of $\ov{\pi_{J'}^{-1}(\Gm_{J'})}$ with $J'\supset J$.
\end{Lem}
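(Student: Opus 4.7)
The plan is to reduce the statement to the untwisted local model treated in \cite[Lem.~3.9]{Va}. Both assertions are \'etale-local on $Y$: smoothness, schematic closures, and schematic unions can be tested after \'etale base change, and the blow-up $\pi:\wt{Y}\to Y$ commutes with \'etale pullbacks, as do the formations of $E_J$, $Y_J$, and the Frobenius graphs $\Gm_J$.

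First, I would invoke the standard \'etale-local structure of a smooth variety equipped with a family of smooth divisors meeting with normal crossings: around any closed point of $X'$ there is an \'etale neighborhood in which $X'$ is identified with $\A^n_k$ and each $X'_i$ passing through the point is identified with the vanishing locus $Z(x_i)$ of a standard coordinate, while the divisors $X'_i$ not passing through the point contribute no local equations to $\C{K}$ and may be ignored. After this reduction the schemes $X'$, $X'_i$, all $X'_J$, and the graphs $\Gm_J\subseteq X'_J\times (X'_J)^{(q)}$ are defined over $\fq$, so the identification $X'^{(q)}=X'$ trivializes the twist and places us exactly in the setting of \cite[Lem.~3.9]{Va}.

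In this standard local model the proof of \cite[Lem.~3.9]{Va} applies verbatim. One covers $\wt{Y}$ by the $2^{|I|}$ natural affine charts of the blow-up of $\C{K}=\prod_{i\in I}(x_i,y_i)$; in the chart where $y_i=x_i t_i$ for every $i\in I$, the defining ideal of $\pi_J^{-1}(\Gm_J)$ acquires equations $x_i=0$ for $i\in J$, $x_i(t_i-x_i^{q-1})=0$ for $i\in I\sm J$, and $y_j=x_j^q$ for $j\notin I$. Distributing the factorizations $x_i(t_i-x_i^{q-1})$ over the choices of factor matches the resulting pieces to the strata indexed by $J'\supseteq J$, giving the claimed schematic-union description; the top component, where one takes $t_i=x_i^{q-1}$ throughout, is cut out by a regular sequence in independent variables and is therefore smooth of dimension $d$. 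The remaining charts are handled symmetrically by interchanging the roles of $x_i$ and $y_i$.

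The only (modest) obstacle is the careful bookkeeping in the \'etale-local reduction, in particular verifying that the formation of $Y_J$, $E_J$, $\wt{Y}$, and $\Gm_{X'_J,q}$ is compatible with \'etale base change on $X'$ and that the statements of smoothness and of schematic union descend from an \'etale cover. Once this is established the lemma follows from the untwisted computation of \cite{Va} with no further input.
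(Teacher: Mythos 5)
Your proposal is correct and follows essentially the same route as the paper: the paper simply records that the proof of \cite[Lemma 3.9]{Va} carries over to the twisted setting, and for the closely analogous Lemma on transversality the paper makes exactly your reduction explicit (pass to an \'etale neighborhood where $X'=\B{A}^n$ and $X'_i=Z(x_i)$, so that everything is defined over $\fq$, the twist trivializes, and the untwisted computation of \cite{Va} applies). One small caveat worth stating when you invoke \'etale-locality: an \'etale chart $U\to X'$ only yields an \'etale neighborhood $U\times U^{(q)}\to Y$ of the locus of points $(x,x^{(q)})$, not of all of $Y$; this is nonetheless sufficient here because all the subschemes $\pi_J^{-1}(\Gm_J)$ and $\ov{\pi_J^{-1}(\Gm^0_J)}$ live over the Frobenius graphs, so only points of that diagonal locus need to be covered. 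Also, the "symmetry" between the charts $y_i=x_it_i$ and $x_i=y_is_i$ is not a literal symmetry in the presence of Frobenius (in the latter chart $y_i=x_i^q$ becomes $y_i(1-y_i^{q-1}s_i^q)=0$, so the second factor is a unit near the origin), but the resulting factorizations still match the claimed stratification, so the conclusion stands.
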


\section{The formula for the number of points in the intersection}\label{the_formula}

 In this section we will show a formula for the number of points in the intersection $\#(C^0\cap\Gamma^0)(k)$ appearing in
  Theorem \ref{T:main}.

\subsection{Preliminaries from intersection theory and $\ell$-adic cohomology}

All schemes are assumed to be of finite type over a fixed field $k$. An integral scheme will be called a variety.

\begin{Emp} \label{E:basic}
{\bf Notation.} Let $X$ be a scheme and $i\in \B{N}$.

(a) We denote by $Z_i(X)$ the group of $i$-cycles on $X$, that is, the free abelian group with generated by the symbols $[Z]$, where
$Z$ runs over all closed subvarieties of $X$ of dimension $i$.

(b) We denote by $A_i(X)$ the group of $i$-cycles on $X$ modulo rational equivalence (see \cite[1.3]{Fu}).
We denote by $A_*(X)$ (resp. $Z_*(X)$) the direct sum of the $A_i(X)$'s (resp. $Z_i(X)$'s).

(c) For a closed equidimensional subscheme $Z\subseteq X$ of dimension $i$, we denote by $[Z]_X$ or simply $[Z]$ its class in $Z_i(X)$ or $A_i(X)$ (see \cite[1.5]{Fu}).

(d) If $X$ is an equidimensional of dimension  $d$ with irreducible components $X_a$, the then $A_d(X)=Z_d(X)$ is a free abelian group with basis $\{[X_a]\}$.
\end{Emp}

\begin{Emp} \label{E:pushforward.}
{\bf Proper pushforward.}
(a) For a proper morphism $f:X\to Y$ of schemes, we denote by $f_*:A_i(X)\to A_i(Y)$ the induced morphism (see \cite[1.4]{Fu}). It is characterized by the property that for every closed subvariety $Z\subseteq X$, the pushforward  $p_*([Z])$ equals $\deg(p|_Z)[p(Z)]$, if the induced map $p|_Z:Z\to p(Z)$  is generically finite, and is zero otherwise.

(b) If $X$ is proper over $k$, then the projection $p_X: X\to \Spec k$ gives rise to the degree map $\deg:=(p_X)_*:A_0(X)\to A_0(\Spec k)=\B{Z}$.

(c) For every closed subscheme $X'\subseteq X$ we denote by $f_{X'}:X'\to f(X')$ the induced morphism
and denote the pushforward $(f|_{X'})_*:  A_i(X')\to A_i(f(X'))$ simply by $f_*$.
\end{Emp}

\begin{Emp} \label{E:pullback}
{\bf Flat pullback.}
(a) For a flat morphism $f:X\to Y$ of schemes of relative dimension $n$, we denote by $f^*:A_i(Y)\to A_{i+n}(X)$ the pullback map
(see \cite[Theorem 1.7]{Fu}). It is characterized by condition, that for every closed subvariety $Z\subseteq Y$ we have an equality
$f^*([Z])=[f^{-1}(Z)]$, where $f^{-1}(Z)$ denotes the schematic pullback.

(b) Flat pullbacks are compatible with proper pushforwards. In other words, for every Cartesian diagram
\[
\begin{CD}
X' @>g_X >>X\\
@Vf'VV  @VfVV\\
Y' @>g_Y>> Y,
\end{CD}
\]
where $f$ and $f'$ are proper and $g_X$ and $g_Y$ are flat of relative dimension $n$ we have an equality $(g_Y)^*\circ f_*=f'_*\circ (g_X)^*$ of morphisms
$A_i(X)\to A_{i+n}(Y')$.
\end{Emp}

\begin{Emp} \label{E:gysin}
{\bf (Refined) Gysin map.}
(a) Let $f:X\to Y$ be a morphism between smooth connected schemes of dimensions $d_X$ and $d_Y$, respectively, let
$Y'\subseteq Y$ be a closed subscheme, and $X':=f^{-1}(Y')\subseteq X$ its preimage. Then we have the refined Gysin map
$f^*:A_i(Y')\to A_{i+d_X-d_Y}(X')$ (see \cite[Definition 8.1.2]{Fu}, which Fulton sometimes denotes by $f^!$)\footnote{More generally, \cite[6.6]{Fu} defines the refined Gysin pullback when $f$ is an lci morphisms, that is, $f=g\circ i$, where $i$ is a closed regular embedding and $g$ is a smooth morphism.} for every $i$.

(b) The refined Gysin maps are compatible with compositions (by \cite[Propostion 8.1.1(a) and Corollary 8.1.3]{Fu}).

(c) Assume in addition that $f$ is flat. Then the induced map $f':X'\to Y'$ is flat as well, that the refined Gysin map $f^*$ coincide with
the pullback $f'^*$ from \re{pullback}(a) (by  \cite[Proposition 8.1.3(a)]{Fu}).
\end{Emp}

\begin{Emp} \label{E:ref inters}
{\bf (Refined) intersection pairing.} Let $X$ be a smooth connected scheme of dimension $d$.

(a) For a pair of closed subschemes $Y_1,Y_2\subseteq X$ we have an intersection pairing
\[
\cap: A_i(Y_1)\times A_j(Y_2)\to A_{i+j-d}(Y_1\cap Y_2)
\]
(compare \cite[Definition 8.1.1]{Fu} but note that our notation is different). Namely, let $\Dt:X\to X\times X$ be the diagonal map. By \re{gysin}, we then have the Gysin map
$\Dt^*:A_{i+j}(Y_1\times Y_2)\to A_{i+j-d}(Y_1\cap Y_2)$, and  the intersection pairing is characterized  by the condition
that for every closed subvarieties $Z_1\subseteq Y_1$ and $Z_2\subseteq Y_2$, we have a formula
$[Z_1]_{Y_1}\cap [Z_1]_{Y_2}=\Dt^*([Z_1\times Z_2])$.

(b) In the situation of (a) assume that $Y_1\cap Y_2$ is proper over $k$.  Then we have an intersection pairing
$\cdot:=\deg\circ\cap:A_i(Y_1)\times A_{d-i}(Y_2)\to\B{Z}$.

(c) In the situation of (b), let $Y'_1\subseteq Y_1$
and $Y'_2\subseteq Y_2$, and let $i_1:Y'_1\hra  Y_1$ and $i_2:Y'_2\hra  Y_2$ be the inclusion maps.
Then for every $\al_1\in A_i(Y'_1)$ and $\al_2\in A_{d-i}(Y'_2)$, we have an equality
$(i_1)_*(\al_1)\cdot (i_2)_*(\al_2)=\al_1\cdot\al_2$ (combine \cite[Proposition 8.1.3(a) and Theorem 6.2(a)]{Fu}).

(d) An open embedding $j:X^0\hra X$ induces open embeddings
$j_1:Y^0_1:=X^0\cap Y_1\hra Y_1$, $j_2:Y^0_2:=X^0\cap Y_2\hra Y_2$ and
$j_{12}:Y_1^0\cap Y_2^0\hra Y_1\cap Y_2$. Then it follows from \re{gysin}(c)
that for every $\al_1\in A_i(Y_1)$ and $\al_2\in A_j(Y_2)$ we have an equality $j_1^*(\al_1)\cap j_2^*(\al_2)=
j_{12}^*(\al_1\cap\al_2)$.

(e) In the situation of (d), assume that $Y_1\cap Y_2$ is proper over $k$ and $Y_1\cap Y_2\subseteq j(X^0)$. Then, by (d), we
have an equality $j_1^*(\al_1)\cdot j_2^*(\al_2)=\al_1\cdot\al_2$.

(f) Let $f:X'\to X$ be a proper map, where $X'$ is a smooth connected scheme of dimension $d'$, and let $Y'\subseteq X'$ and
$Y\subseteq X$ be closed subschemes such the intersection  $f(Y')\cap Y$ is proper over $k$.  Then the intersection $Y'\cap f^{-1}(Y)$ is proper
over $k$ as well, and for every $\al\in A_i(X')$ and $\beta\in A_{d-i}(Y')$ we have an equality $\al\cdot f^*(\beta)=f_*(\al)\cdot\beta$,
called the {\em projection formula} (see \cite[Example 8.1.7)]{Fu}).
\end{Emp}

\begin{Emp} \label{E:examples}
{\bf Example.}  Let $X$ be a smooth connected scheme of dimension $d$, and let $Y,Z\subseteq X$ be closed subvarieties such that
such that $Y\cap Z$ is proper over $k$ and $\dim(X)+\dim(Y)=d$. Then we can form the intersection $[Y]_Y\cdot [Z]_Z$ (see \re{ref inters}(b)).

(a) If $X$ is proper, then we have an equality $[Y]_Y\cdot [Z]_Z= [Y]_X\cdot [Z]_X$ (by \re{ref inters}(c)).

(b) If $Y\cap Z$ is finite, then we have an equality $[Y]_Y\cap [Z]_Z=\sum_{x\in Y\cap Z}l(\C{O}_{Y\cap Z,x})[x]$, hence
$[Y]_Y\cdot [Z]_Z=\sum_{x\in Y\cap Z}l(\C{O}_{Y\cap Z,x})$. Indeed, since $X\subseteq X\times X$ is locally given by $d$ equations,
the assumption that the intersection $Y\cap Z=X\cap(Y\times Z)\subseteq X\times X$ is finite implies that $Y\times Z$ is Cohen-Macaulay at every
$x\in Y\cap Z\subseteq Y\times Z$. Therefore the assertion follows from \cite[Proposition 8.2(b)]{Fu}.
\end{Emp}



From now on we assume that $k$ is an algebraically closed field, and $\ell$ is a prime,
different from the characteristic of $k$.

\begin{Emp} \label{E:end}
{\bf Morphism of the cohomology.} Let $X_1$ and $X_2$ be smooth connected
proper schemes over $k$ of dimension $d$.

(a) Let $c=(c_1,c_2):C\to X_1\times X_2$ is a correspondence,
where $C$ is a scheme of dimension $d$ as well.
Then $c$ gives rise to the morphism
\[
H^i(c):H^i(X_1,\ql)\overset{(c_1)^*}{\lra}H^i(C,\ql)\overset{(c_2)_*}{\lra}H^i(X_2,\ql),
\]
where  $(c_2)_*: H^i(C,\ql)\to H^i(X_2,\ql)$ is the pushforward map, corresponding by the Poincare duality to the pairing $ H^i(C,\ql)\times
H^{2d-i}(X_2,\ql(d))\to\ql$, defined by  $(x,y)\mapsto
\Tr_{C/k}(x\cup c_2^*(y))$.

(b) As is explained for example in \cite[4.5]{Va}, every class $\al\in A_d(X_1\times X_2)$ induces
a morphism $H^i(\al):H^i(X_1,\ql)\to H^i(X_2,\ql)$. Namely, it is characterised by the condition that the map $\al\mapsto H^i(\al)$
is a group homomorphism and that if $C\subseteq X_1\times X_2$ is a closed subvariety of dimension $d$, and $c:C\hra X_1\times X_2$ is the inclusion map, then the morphism $H^i([C])$ coincides with the map $H^i(c)$ from (a).

(c) A morphism $f:X_1\to X_2$ induces the pullback map $f^*:H^i(X_2,\ql)\to H^i(X_1,\ql)$.
Then for every $\al\in A_d(X_1\times X_2)$, one can form an endomorphism  $f^*\circ H^i(\al)$ of $H^i(X_1,\qlbar)$.
Let $\Gm_f\subseteq X_1\times X_2$ be the graph of $f$. Then (see, for example, the discussion in \cite[4.6]{Va}) we have an equality

\begin{equation} \label{Eq:traces}
\al\cdot[\Gm_f]=\sum_{i=0}^{2d}(-1)^i\Tr(f^*\circ H^i(\al),H^i(X_1,\ql)).
\end{equation}

\end{Emp}

\begin{Emp} \label{E:calculation}
{\bf A calculation.} In the situation of \re{end}, to $\al\in A_d(X_1\times X_2)$ we associate $\deg(p_1|_{\al})\in\B{Z}$ characterized by the condition $(p_1)_*(\al)= \deg(p_1|_{\al})[X_1]$ (see \re{basic}(d)). We claim that for every $\al\in A_d(X_1\times X_2)$ and a generically finite morphism $f:X_1\to X_2$ we have an equality
\[
\Tr(f^*\circ H^{2d}(\al), H^{2d}(X_1,\ql))=\deg(f)\deg(p_1|_{\al}).
\]

 Indeed, since  $\Tr(f^*\circ H^{2d}(\al))=\Tr(H^{2d}(\al)\circ f^*)$ and $H^{2d}(X_2,\ql)$ is one dimensional, we need to show that $H^{2d}(\al)\circ f^*=\deg(f)\deg(p_1|_{\al})\Id$.
By additivity, we can assume that $\al=[C]$, where $C\subseteq X_1\times X_2$ is a closed subvariety of dimension $d$. Let $c:C\hra X_1\times X_2$ is the inclusion map.

In this case, by definition of $(p_1)_*$, the degree $\deg(p_1|_{\al})$ equals $\deg(c_1)$, if $c_1$ is generically finite, and zero, otherwise.
Then combining \re{end}(b) and basic properties of the trace map, we see that the endomorphism $H^{2d}(\al)\circ f^*=(c_2)_*\circ (c_1)^*\circ f^*$ of $H^{2d}(X_1,\qlbar)$ equals $\deg(f\circ c_1)\Id=\deg(f)\deg(c_1)\Id$, if $c_1$ is generically finite, and zero, otherwise.
\end{Emp}

\begin{Emp} \label{E:purity}
{\bf Independence of $\ell$.}
Let $X$ be a smooth proper variety of over  $k$. It is known that for every $\ell\neq\on{char}(k)$ and $i\in\B{N}$, the cohomology $H^i(X,\ql)$ is a finite-dimensional $\ql$-vector space.  It follows from results of Katz--Messing \cite{KM} that for every $\al\in A_d(X\times X)$ the characteristic polynomial of the endomorphism $H^i(\al)$ of $H^i(X,\ql)$ has coefficients in $\B{Z}$ and is independent of $\ell\neq\on{char}(k)$.

Namely, in \cite{KM} (see also \cite[3.3.3]{An}) this is shown under an assumption that $k$ is an algebraic closure of a finite field (using purity \cite{De}), while the general case follows by a standard speading out argument (see \rcl{indep} and the ``general case'' in the proof of \rp{spectral}).
\end{Emp}


%


\subsection{The formula}

The finiteness of the intersection is guaranteed by the following simple lemma.

\begin{Lem} \label{L:isolated}
Let $c=(c_1,c_2):C \to X \times X^{(q)}$ be a correspondence, let $\Gm\subseteq X \times X^{(q)}$ be the graph of $\on{F}_{X/k,q}$, and
let $y\in c^{-1}(\Gamma)(k)\subseteq C(k)$ be a point such that $c_2^{-1}(c_2(y))$ is finite and
$q>\ram(c_2,c_2(y))$.

Then $y\in c^{-1}(\Gamma)$ is an isolated point, and there exists an open neighborhood $C^0\subseteq C$ of $y$ such that
the closed subschemes $C^0\cap c^{-1}(\Gm)$ and $c_2^{-1}(c_2(y))\cap C^0$ of $C^0$ are
equal.

\end{Lem}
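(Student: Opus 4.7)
The plan is to verify the lemma by a local computation in the stalk $T := \mathcal{O}_{C,y}$. Put $x_0 := c_1(y)$, $z := c_2(y)$, $\mathfrak{m} := \mathfrak{m}_y$, and let $I \subseteq T$ and $J \subseteq T$ denote the stalks at $y$ of the ideal sheaves of the closed subschemes $c_2^{-1}(z) \subseteq C$ and $c^{-1}(\Gamma) \subseteq C$ respectively; by construction $I = c_2^{*}(\mathfrak{m}_z)\cdot T$. Since $\Gamma$ is the pullback of the diagonal $\Delta_{X^{(q)}} \subseteq X^{(q)}\times X^{(q)}$ along $\on{F}_{X/k,q}\times \Id$, unwinding this description shows that $J$ is generated by the elements
\[
c_1^{*}\on{F}_{X/k,q}^{*}(a) - c_2^{*}(a), \qquad a \in \mathfrak{m}_z.
\]

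The geometric input I would use is that the relative Frobenius $\on{F} := \on{F}_{X/k,q}$ raises functions to the $q$-th power: locally, with $z = (z_1, \ldots, z_n)$ and $x_0 = (z_1^{1/q}, \ldots, z_n^{1/q})$, one has $\on{F}^{*}(y_i - z_i) = x_i^{q} - z_i = (x_i - z_i^{1/q})^{q}$, so $\on{F}^{*}(\mathfrak{m}_z) \subseteq \mathfrak{m}_{x_0}^{q}$. Consequently $c_1^{*}\on{F}^{*}(a) \in \mathfrak{m}^{q}$ for every $a \in \mathfrak{m}_z$, and since $c_2^{*}(a) \in I$, inspecting the generators of $J$ yields the two complementary containments
\[
J \subseteq I + \mathfrak{m}^{q} \qquad \text{and} \qquad I \subseteq J + \mathfrak{m}^{q}
\]
inside $T$.

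Now the ramification hypothesis enters. By definition, $r := \ram(c_2,z) < q$ gives $\mathfrak{m}^{r} \subseteq I$, hence $\mathfrak{m}^{q} \subseteq \mathfrak{m}^{r} \subseteq I$, which collapses the first containment to $J \subseteq I$. For the reverse I would pass to the local Noetherian quotient $\bar{T} := T/J$ with maximal ideal $\bar{\mathfrak{m}}$; the second containment gives $\bar{I} \subseteq \bar{\mathfrak{m}}^{q}$, while $\mathfrak{m}^{r} \subseteq I$ gives $\bar{\mathfrak{m}}^{r} \subseteq \bar{I}$. Chaining and using $q \geq r + 1$ produces $\bar{\mathfrak{m}}^{r} = \bar{\mathfrak{m}}^{r+1}$, so Nakayama's lemma applied to the finitely generated ideal $\bar{\mathfrak{m}}^{r}$ in the local ring $\bar{T}$ forces $\bar{\mathfrak{m}}^{r} = 0$. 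In particular $\bar{T}$ is Artinian (so $y$ is isolated in $c^{-1}(\Gamma)$) and $\bar{I} \subseteq \bar{\mathfrak{m}}^{q} \subseteq \bar{\mathfrak{m}}^{r} = 0$ gives $I \subseteq J$.

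Combining, $J = I$ as stalks at $y$. Since both are stalks of coherent ideal sheaves on $C$, they coincide on some Zariski open neighborhood $C^{0} \subseteq C$ of $y$, and on $C^{0}$ the closed subschemes $c^{-1}(\Gamma)\cap C^{0}$ and $c_2^{-1}(z)\cap C^{0}$ are equal, as required. The step I expect to be the main obstacle is recognizing that the ramification bound enters only after passing to $\bar{T}$: the containment $J \subseteq I + \mathfrak{m}^{q}$ alone is too weak to conclude $J = I$, and the threshold $q > \ram(c_2,z)$ is precisely what makes the two inclusions $\bar{\mathfrak{m}}^{r} \subseteq \bar{I} \subseteq \bar{\mathfrak{m}}^{q}$ collapse to equalities and trigger Nakayama.
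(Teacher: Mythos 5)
Your proof is correct and follows essentially the same route as the paper's: reduce to the stalk at $y$, use that the relative Frobenius raises functions to $q$-th powers to place $c_1^*\on{F}^*(\mathfrak{m}_z)$ inside $\mathfrak{m}_y^q$, combine with the ramification bound $\mathfrak{m}_y^r\subseteq I$, and finish with Nakayama. The only difference is bookkeeping: the paper applies Nakayama directly to the chain $J\subseteq I\subseteq J+\mathfrak{m}_y I$, whereas you pass to $T/J$ and apply it to $\bar{\mathfrak{m}}^r$; the essential ingredients are identical.
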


\begin{proof}
Set $x:=c_1(y)\in X(k)$. Since assertion is local, so we can assume that $X$ and $C$ are affine.
Note that $c_2(y)=x^{(q)}$ and consider ideals $I:=(\C{I}_{c_2^{-1}(x^{(q)}),C})_y\subseteq \C{O}_{C,y}$ and $J:=(\C{I}_{C \cap \Gamma,C})_y\subseteq \C{O}_{C,y}$, where $\C{O}_{C,y}$ denotes the local ring of $C$ at $y$. Since $y$ is an isolated point in $c_2^{-1}(x^{(q)})$, the radical $\sqrt{I}$ is the maximal ideal $\frak{m}_{C,y}\subseteq \C{O}_{C,y}$,
and is suffices to show the equality $I=J$.

Let $(f_1,\ldots,f_n)$ be generators of $\frak{m}_{X,x}\subseteq \C{O}_{X,x}$, let $(f^{(q)}_1,\ldots,f^{(q)}_n)$ be the corresponding generators of $\frak{m}_{X^{(q)},x^{(q)}}\subseteq \C{O}_{X^{(q)},x^{(q)}}$, and let $a_i:=c_1^{\cdot}(f_i)\in  \C{O}_{C,y}$ and $b_i:=c_2^{\cdot}(f^{(q)}_i)\in  \C{O}_{C,y}$ be their pullbacks. Then we have equalities $I=(b_1,\ldots,b_n)$ and $J=(b_1-a_1^q,\ldots,b_n-a_n^q)$.

Since $\ram(c_2,x^{(q)})<q$ and $a_i\in \frak{m}_{C,y}=\sqrt{I}$ for all $i$, we have $a_i^{q-1}\in I$, hence $a_i^{q}\in \frak{m}_{C,y}I\subseteq  I$. Therefore we have  $b_i-a_i^{q}\in I$ and $b_i=(b_i-a_i^q)+a_i^q\in J+\frak{m}_{C,y}I$ for all $i$. This implies inclusions
$J\subseteq I\subseteq J+\frak{m}_{C,y}I$, from which the equality $I=J$ follows from the Nakayama lemma.
\end{proof}

The following notion is not standard, but it is convenient for our purposes.

\begin{Emp} \label{E:uh-etale}
{\bf Uh-\'etale morphisms.} We call a morphism of schemes $f:X\to Y$ {\em uh-etale}, where {\em uh} stands for {\em universal homeomorphism},
if it decomposes as $X\overset{f_i}{\lra}{Z}\overset{f_s}{\lra}Y$, where $f_i$ is a flat universal homeomorphism and $f_s$ is \'etale.
Notice that if $f$ is uh-\'etale and $U\subseteq X$ is an open subscheme, then $f|_U:U\to X$ is uh-\'etale.
\end{Emp}


\begin{Lem} \label{L:uh-etale}
Let $f:X\to Y$ be a generically finite morphism of finite type between integral Noetherian schemes.

(a) There exists an open dense subscheme $U\subseteq X$ such that $f|_U:U\to Y$ is uh-etale.

(b) If $f$ is uh-\'etale, then we have an equality $l(\C{O}_{f^{-1}(f(x)),x})=\deg(f)_{\on{insep}}$ for every $x\in X$.
\end{Lem}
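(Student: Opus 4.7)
The plan for part (a) is to construct the étale intermediate $Z$ by applying the primitive element theorem to the maximal separable subextension of $K(X)/K(Y)$ and spreading out. First, I restrict to affine opens and apply generic flatness to replace $X$ by a dense open on which $f$ becomes finite flat. Let $L \subseteq K(X)$ denote the maximal subfield that is separable over $K(Y)$, and write $L = K(Y)(\theta)$ with separable minimal polynomial $p(t) \in K(Y)[t]$. After clearing denominators and shrinking $Y$, the polynomial $p$ lifts to a monic $\widetilde{p} \in \C{O}_Y[t]$; set $Z := \Spec_Y(\C{O}_Y[t]/(\widetilde{p}))$. Removing the vanishing locus of the discriminant of $\widetilde{p}$ from $Y$ makes $Z \to Y$ finite étale. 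The inclusion $L \hookrightarrow K(X)$ defines a rational $Y$-morphism from $X$ to $Z$, which extends to an honest morphism after shrinking $X$; one more application of generic flatness makes $X \to Z$ finite flat. Since $K(X)/L$ is purely inseparable, $X \to Z$ is radiciel on a dense open, and after a final shrinkage it is radiciel everywhere. A finite flat surjective radiciel morphism is a universal homeomorphism, which produces the desired uh-étale structure.

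For part (b), the plan is a direct computation on schematic fibers. Decompose $f = f_s \circ f_i$ uh-étale. The flat universal homeomorphism $f_i$ is integral and quasi-finite (as a universal homeomorphism of finite type over a Noetherian base), hence finite, and therefore finite flat of constant rank $\deg(f_i)$. In the tower $K(Y) \subseteq K(Z) \subseteq K(X)$, the first extension is separable (since $f_s$ is étale) and the second is purely inseparable (since $f_i$ is radiciel), so $\deg(f)_{\on{insep}} = \deg(f_i)$. Étaleness of $f_s$ makes its schematic fiber over $y := f(x)$ reduced and discrete, and hence in a neighborhood of $x$ the schematic fiber $f^{-1}(y)$ coincides with $f_i^{-1}(z)$ for $z := f_i(x)$. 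Universal bijectivity of $f_i$ implies that $x$ is the unique point of $f_i^{-1}(z)$, and finite flatness then yields $\dim_{k(z)} \C{O}_{f_i^{-1}(z),x} = \deg(f_i)$. The length of this Artin local ring over itself equals this $k(z)$-dimension divided by $[k(x):k(z)]$, which simplifies to $\deg(f)_{\on{insep}}$ at the closed points over the algebraically closed base $k$ relevant to the applications.

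The main obstacle will be the construction of the étale intermediate $Z \to Y$ in part (a): this requires combining the primitive element theorem for function fields with a careful spreading-out argument, producing an affine $Y$-scheme with the prescribed function field, and controlling the locus where étaleness may fail via the discriminant. Once $Z$ is in hand, the remaining steps---using generic flatness and the standard identification of finite flat surjective radiciel morphisms with flat universal homeomorphisms---are routine, and part (b) reduces to the short fiber length computation above.
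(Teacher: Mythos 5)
Your proof is correct and follows essentially the same strategy as the paper: in (a) split the function field extension $K(Y)\subseteq L\subseteq K(X)$ at the maximal separable subextension, realize $L$ as the function field of an intermediate scheme $Z$, and then shrink to make $X\to Z$ a flat universal homeomorphism and $Z\to Y$ \'etale; in (b) reduce to a fiber-length count via unramifiedness of $f_s$ and flatness of $f_i$. The only genuine difference in (a) is the construction of $Z$: you produce it from a primitive element $\theta$ of $L$, spread out a monic lift of its minimal polynomial, and delete the discriminant locus to force \'etaleness, whereas the paper simply takes $Z=\Spec(L\cap \Gamma(X,\C{O}_X))$ (a finite $\Gamma(Y,\C{O}_Y)$-module since it sits inside the finite module $\Gamma(X,\C{O}_X)$) and appeals to generic structure theorems; the paper's version is a bit shorter, yours is a bit more explicit, and both work.

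One point worth flagging, which actually cuts slightly in your favor: in (b) you correctly observe that the local ring $\C{O}_{f_i^{-1}(z),x}$, being the entire fiber (as $f_i$ is radiciel), has $k(z)$-dimension $\deg(f_i)$ by flatness, so its \emph{length} as an Artinian ring is $\deg(f_i)/[k(x):k(z)]$, not $\deg(f_i)$ in general. The paper's proof passes over this and just writes $l(\C{O}_{f_i^{-1}(f_i(x)),x})=\deg(f_i)$ ``since $f_i$ is flat,'' which is only the $k(z)$-dimension; as stated the lemma fails for, say, $Y=\Spec\F_p(t)$ and $X=\Spec\F_p(t)[s]/(s^p-t)$, where the length is $1$ but $\deg(f)_{\on{insep}}=p$. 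You are right that $[k(x):k(z)]=1$ at all closed points in the paper's actual application (schemes of finite type over an algebraically closed field), so the discrepancy is harmless for the intended use, but your proof is the more careful of the two on this point.
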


\begin{proof}
(a) The induced map on rational functions $f^*:\on{Rat}(Y)\hra\on{Rat}(X)$ decomposes
as a composition $\on{Rat}(Y)\hra L\hra\on{Rat}(X)$ such that $L/\on{Rat}(Y)$ is separable and $\on{Rat}(X)/L$ is purely inseparable.
Replacing $Y$ by an open dense subscheme and $X$ by its preimage, we can assume that $X$ any $Y$ are affine, while $f$ is a finite.

In this case, $f$ decomposes as $X\overset{f_i}{\lra}{Z}\overset{f_s}{\lra}Y$, where $Z$ is the affine scheme $\Spec(L\cap \Gm(X,\C{O}_X))$.
Then $\on{Rat}(Z)\simeq L$, morphisms $f_i$ and $f_s$ are finite, $f_i$ is generically a universal homeomorphism, while
$f_s$ is generically \'etale. Therefore there exists an open dense subscheme $V\subseteq Z$ such that $f_s|_V:V\to Y$ is \'etale and $f_i|_V:=f_i^{-1}(V)\to V$ is a flat universal homeomorphism (use \cite[Tag 0559]{Stacks}).
Then $U:=f_i^{-1}(V)$ satisfies the required property.

(b) Notice first that in the notation \re{uh-etale}, the scheme $Z$ is irreducible, because $X$ is irreducible and $f_i$ is a universal homeomorphism, and $Z$ is reduced, because $Y$ is reduced, while $f_s$ is \'etale. Thus $Z$ is integral, so we can talk about $\deg(f_i)$.
Then for every $x\in X$ we have an equality
\[
l(\C{O}_{f^{-1}(f(x)),x})=l(\C{O}_{f_i^{-1}(f_i(x)),x})=\deg(f_i)=\deg(f)_{\on{insep}}.
\]
Indeed, the first equality holds since $f_s$ is unramified, the second holds since $f_i$ is flat, while the third one is clear.
\end{proof}

\begin{Emp} \label{E:assumptions}
{\bf Assumptions.} Let $(k,q,X^0,C^0)$ be a quadruple as in \rt{main}(b), let
$X\subseteq\B{P}^n_k$ be the closure of $X^0$, let $C\subseteq X\times X^{(q)}$ be the closure of $C^0$, and let $c:C\hra X\times X^{(q)}$ be the inclusion map. Assume that

\begin{enumerate}[(a)]

\item $X^0$ is smooth over $k$;

\item morphism $c^0_2$ is uh-\'etale (see \re{uh-etale});


\item the boundary $\partial X:=X\sm X^0$ is locally $c$-invariant (see \rd{inv}).

\end{enumerate}
\end{Emp}

\begin{Emp} \label{E:construction}
{\bf Construction.}
(a) By \cite[Theorem 4.1]{dJ}, there exists an alteration $f:X'\to X$ such that $X'$ is connected and smooth over $k$ and
the reduced preimage $\partial X':=f^{-1}(\partial X)_{\on{red}}\subseteq X'$ is a strict normal crossing divisor, that is, a union of smooth divisors $\{X'_i\}_{i\in I}$ with normal crossings. From now on we fix such a $f$. In particular, the twisted Pink construction \re{pink} applies.

(b) Set $X'^0:=X'\sm\partial X'$, and let $f^0:X'^0\to X^0$ be the restriction of $f$.
We denote the projection $f\times f^{(q)}:Y=X'\times X'^{(q)}\to X\times X^{(q)}$ by $g$ and denote its restriction
$f^0\times (f^0)^{(q)}:X'^0\times (X'^0)^{(q)}\to X^0\times (X^0)^{(q)}$ by $g^0$.

(c) We set  $C'^0:=(g^0)^{-1}(C^0)\subseteq X'^0\times (X'^0)^{(q)}$, let  $C'\subseteq Y$ be the closure of $C'^0$, and let
$c'=(c'_1,c'_2):C'\hra Y=X'\times X'^{(q)}$ be the inclusion map. As in \re{prpreim}, let $\wt{C}\subseteq\wt{Y}$ be the strict preimage of $C'$. By construction, the open embedding $j:X'^0\times (X'^0)^{(q)}\hra\wt{Y}$ (see \re{pink}(e)) restricts to an open embedding
$j_C:C'^0\hra \wt{C}$.

(d) Since $g^0$ is a morphism between smooth varieties,
we can form the refined Gysin pullback $\al^0:=(g^0)^*([C^0]_{C^0}) \in A_d(C'^0)$ of $[C^0]_{C^0}\in  A_d(C^0)$ (see \re{basic}(c) and \re{gysin}(a)).

(e) Choose a cycle $\wt{\al}^0:=\sum_i n_i [V_i]\in Z_d(C'^0)$ representing $\al^0$, where each $V_i\subseteq C'^0$ is a closed irreducible subvariety. We denote by $\tilde{\al}_{\wt{C}}\in A_d(\tilde{C})$ the class of $\sum_i n_i [\bar{V}_i]\in Z_d(\tilde{C})$, where $\bar{V}_i\subseteq\tilde{C}$ is the closure of $j_C(V_i)$.

(f) We denote by $\tilde{\al}\in  A_d(\tilde{Y})$ the image of $\tilde{\al}_{\wt{C}}\in A_d(\tilde{C})$ with respect to the closed embedding
$\wt{C}\hra\wt{Y}$. For every $J\subseteq I$, recall that $i_J:E_J\hra \wt{Y}$ is a closed embedding of codimension $|J|$ between smooth schemes (see \rl{smooth pink}), and we set $\wt{\al}_{J}:=\pi_{J*}i_{J}^*(\wt{\al})\in A_{d-|J|}(Y_J)$. In particular, we have  $\wt{\al}_J=0$, if $Y_J=\emptyset$.

(g) By \re{end}, every class $\wt{\al}_J\in A_{d-|J|}(X'_J\times (X'_J)^{(q)})$, gives rise to endomorphism
$\on{F}_{X'_J,q}^*\circ H^i(\wt{\al}_J)$ of $H^i(X'_J,\ql)$ for all $i=0,\ldots,2(d-|J|)$.
\end{Emp}

\begin{Emp} \label{E:rem construction}
{\bf Remark.} Notice that in the construction \re{construction} we made two choices. Namely, we choose an alteration $f:X'\to X$
in (a) and a cycle $\wt{\al}^0$ in (e).
\end{Emp}


%


The goal of this section is to show the following formula

\begin{Prop} \label{P:basic formula}
In the situation of \re{assumptions} and notation \re{construction} assume that $q>\on{ram}(c'_2,\partial X'^0)$ and $q>\on{ram}(c^0_2)$ (see \re{ramification}). Set $\Gm^0:=\Gm_{X^0,q}$. Then the intersection $(C^0\cap\Gm^0)(k)$ is finite, and the difference
\[
\#(C^0 \cap \Gamma^0)(k)-\frac{\deg(c^0_1)}{\deg(c^0_2)_{\on{insep}}}q^{d}
\]
equals the product of $\frac{1}{\deg(f)\deg(c^0_2)_{\on{insep}}}$ and
\begin{equation} \label{Eq:main estimate}
\sum_{i=0}^{2d-1}(-1)^{i} \on{Tr}(F^*_{X',q} \circ H^{i}(\wt{\al}_{\emptyset}))+
\sum_{J \neq \emptyset}(-1)^{|J|}\left (\sum_{i=0}^{2d-|J|}(-1)^{i} \on{Tr}(F^*_{X'_J,q} \circ H^{i}(\wt{\al}_{J}))\right)
\end{equation}
\end{Prop}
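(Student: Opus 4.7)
The plan is to compute the intersection number $\wt{\al}\cdot[\wt{\Gm}]\in\B{Z}$, formed in the smooth $2d$-dimensional scheme $\wt{Y}$, in two ways (locally/geometrically and globally/cohomologically) and then compare them. The finiteness of $(C^0\cap\Gm^0)(k)$ is immediate from \rl{isolated}: since $c^0_2$ is quasi-finite (being uh-\'etale) and $q>\on{ram}(c^0_2)$, each $k$-point of the intersection is isolated, so the intersection is finite.

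\emph{Local calculation.} The target identity is
\[
\wt{\al}\cdot[\wt{\Gm}]=\deg(f)\deg(c^0_2)_{\on{insep}}\,\#(C^0\cap\Gm^0)(k).
\]
Because $C'\hra X'\times X'^{(q)}\xrightarrow{g}X\times X^{(q)}$ factors through $C$, the alteration $f$ lifts to a morphism of correspondences $c'\to c$, so by \rl{locinv} the set $\p X'=f^{-1}(\p X)_{\red}$ is locally $c'$-invariant. Applying \rl{trans} (using $q>\on{ram}(c'_2,\p X'^{(q)})$) forces $\wt{C}\cap\wt{\Gm}\subseteq\pi^{-1}(X'^0\times(X'^0)^{(q)})$, the open locus where $\pi$ is an isomorphism. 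By Example \re{ref inters}(e) applied to $j:X'^0\times(X'^0)^{(q)}\hra\wt{Y}$, this identifies $\wt{\al}\cdot[\wt{\Gm}]=\al^0\cdot[\Gm^0_{X'^0,q}]$. The projection formula \re{ref inters}(f) for the proper morphism $g^0$, combined with $(g^0)_*[\Gm^0_{X'^0,q}]=\deg(f)[\Gm^0_{X^0,q}]$ (the induced map on Frobenius graphs is essentially $f^0$), reduces this to $\deg(f)\cdot([C^0]\cdot[\Gm^0_{X^0,q}])$. Finally, Example \re{examples}(b), together with \rl{isolated} (locally identifying $C^0\cap\Gm^0$ with $(c^0_2)^{-1}(c^0_2(y))$) and \rl{uh-etale}(b) (giving the local multiplicity $\deg(c^0_2)_{\on{insep}}$), yields the formula.

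\emph{Cohomological calculation---the main obstacle.} The goal is
\[
\wt{\al}\cdot[\wt{\Gm}]=\sum_{J\subseteq I}(-1)^{|J|}\bigl(\wt{\al}_J\cdot[\Gm_{X'_J,q}]\bigr),
\]
each term being an intersection in the smooth scheme $X'_J\times(X'_J)^{(q)}$, which formula \form{traces} then converts into $\sum_i(-1)^i\Tr(F^*_{X'_J,q}\circ H^i(\wt{\al}_J))$. The combinatorial core is an inclusion--exclusion built on \rl{grfrob}: each $\pi_J^{-1}(\Gm_J)\subseteq E_J$ is a schematic union of the smooth $d$-dimensional pieces $\ov{\pi_{J'}^{-1}(\Gm^0_{J'})}$ for $J'\supseteq J$, with the strict transform $\wt{\Gm}$ being the $J=\emptyset$ piece. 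One expresses $\wt{\al}\cdot[\wt{\Gm}]$ as an alternating sum of contributions supported on the exceptional strata $E_J$, and then uses the defining relation $\wt{\al}_J=\pi_{J*}i_J^*(\wt{\al})$ together with the projection formula for the proper maps $\pi_J:E_J\to Y_J$ to identify each contribution with $\wt{\al}_J\cdot[\Gm_{X'_J,q}]$. The execution of this combinatorial identity is the subject of subsection 2.3 below.

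\emph{Leading term and conclusion.} By \re{calculation}, since $\deg(F_{X',q})=q^d$, the $J=\emptyset$, $i=2d$ term equals $q^d\cdot\deg(p_1|_{\wt{\al}_\emptyset})$. To evaluate the scalar, I combine the standard identity $(g^0)_*(g^0)^*=\deg(g^0)\cdot\on{id}=\deg(f)^2\cdot\on{id}$ (a direct consequence of the projection formula \re{ref inters}(f) applied to the fundamental class of $X'^0\times(X'^0)^{(q)}$), the equality $(p_1^X)_*[C^0]=\deg(c^0_1)[X^0]$, and $(p_1^X)_*(g^0)_*=(f^0)_*(p_1^{X'})_*$; these force $(p_1^{X'})_*\al^0=\deg(f)\deg(c^0_1)[X'^0]$, and passing to closures yields $\deg(p_1|_{\wt{\al}_\emptyset})=\deg(f)\deg(c^0_1)$. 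Thus the leading term equals $\deg(f)\deg(c^0_1)q^d$. Equating the two expressions for $\wt{\al}\cdot[\wt{\Gm}]$, isolating the leading term, and dividing by $\deg(f)\deg(c^0_2)_{\on{insep}}$ produces the formula of the proposition.
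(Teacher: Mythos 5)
Your proposal is correct and follows essentially the same route as the paper: the two-sided evaluation of $\wt{\al}\cdot[\wt{\Gm}]$ (geometrically via \rl{trans}, \re{ref inters}(e),(f), \rl{isolated} and \rl{uh-etale}(b); cohomologically via the inclusion--exclusion over $E_J$ from \rl{grfrob} and \form{traces}), together with the identification of the $J=\emptyset$, $i=2d$ term with $\deg(f)\deg(c^0_1)q^d$. The only place where you are slightly less careful than the paper is in asserting $(g^0)_*(g^0)^*=\deg(f)^2\cdot\on{id}$ outright for the refined Gysin pullback (the paper's Lemma \ref{L:degree proj} justifies this by restricting to a dense open where $f$ is finite flat and noting that $A_d$ of a $d$-dimensional scheme is detected generically), but this is a matter of rigor rather than a genuine gap.
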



\subsection{Calculation of the intersection numbers}

Let $[\wt{\Gm}] \in A_d(\wt{Y})$ be the class of $\wt{\Gamma}\subseteq\wt{Y}$ (see \rl{trans}).  To show \rp{basic formula}, we are going to calculate the intersection number $\wt{\al}\cdot [\wt{\Gm}]$ in $\wt{Y}$ in two different ways.

The following result is an analogue of \cite[Lemma 4.9]{Va} in our set-up (which in its turn was motivated by \cite[Proposition IV.6]{Laf}).

\begin{Lem} \label{L:inters}
Let $[\Gm_J] \in A_{d-|J|}(Y_J)$ be the class of $\Gm_J \subseteq Y_J$ (see \rl{grfrob}).
Then we have an equality

\begin{equation} \label{Eq:classes}
\wt{\al}\cdot[\wt{\Gm}]=\sum_{J\subseteq I}(-1)^{|J|}\wt{\al}_J\cdot[\Gm_J].
\end{equation}
\end{Lem}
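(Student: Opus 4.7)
The plan is to introduce, for each $J\subseteq I$, the smooth $d$-dimensional subvariety $P_J:=\ov{\pi_J^{-1}(\Gm_J^0)}\subseteq E_J$ supplied by \rl{grfrob} (so that $P_\emptyset=\wt{\Gm}$), and to prove the ``pre-inverted'' identity
\[
\wt{\al}_J\cdot[\Gm_J]\;=\;\sum_{J'\supseteq J}\wt{\al}\cdot[P_{J'}]_{\wt{Y}}\qquad(\star)
\]
for every $J\subseteq I$. Specializing $J=\emptyset$ after M\"obius inversion on the Boolean lattice of subsets of $I$ will then give \form{classes}.

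The key geometric input is that, \'etale locally on $Y$ (in coordinates where $Y_i=V(x_i,y_i)$), the scheme $\wt{Y}$ is an iterated blow-up and $E_J$ is visibly a $(\B{P}^1)^{|J|}$-bundle over $Y_J$, so the projection $\pi_J:E_J\to Y_J$ is flat (in fact smooth proper) of relative dimension $|J|$. Combined with \rl{grfrob}, which presents $\pi_J^{-1}(\Gm_J)$ as the reduced union of the $P_{J'}$ with $J'\supseteq J$, each smooth of dimension $d$, one obtains the cycle-theoretic equality
\[
\pi_J^*[\Gm_J]\;=\;[\pi_J^{-1}(\Gm_J)]_{E_J}\;=\;\sum_{J'\supseteq J}[P_{J'}]_{E_J}\quad\text{in }A_d(E_J).
\]

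To establish $(\star)$, I plan to apply the projection formula (\re{ref inters}(f)) twice. First, for the proper flat $\pi_J$, using the definition $\wt{\al}_J=\pi_{J*}(i_J^*\wt{\al})$,
\[
\wt{\al}_J\cdot[\Gm_J]\;=\;(i_J^*\wt{\al})\cdot\pi_J^*[\Gm_J]\;=\;\sum_{J'\supseteq J}(i_J^*\wt{\al})\cdot[P_{J'}]_{E_J}.
\]
Second, since $[P_{J'}]_{E_J}=(i_{J',J})_*[P_{J'}]_{E_{J'}}$ for the inclusion $i_{J',J}:E_{J'}\hra E_J$ of smooth subvarieties (\rl{smooth pink}), the projection formula for the regular embedding $i_{J',J}$, together with the functoriality $i_{J'}^*=i_{J',J}^*\circ i_J^*$ of refined Gysin maps (\re{gysin}(b)), reduces each summand to $(i_{J'}^*\wt{\al})\cdot[P_{J'}]_{E_{J'}}$; a final application of the projection formula for $i_{J'}:E_{J'}\hra\wt{Y}$ rewrites this as $\wt{\al}\cdot[P_{J'}]_{\wt{Y}}$, yielding $(\star)$.

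M\"obius inversion on the Boolean lattice gives $\wt{\al}\cdot[P_J]_{\wt{Y}}=\sum_{J'\supseteq J}(-1)^{|J'|-|J|}\wt{\al}_{J'}\cdot[\Gm_{J'}]$, and setting $J=\emptyset$ with $P_\emptyset=\wt{\Gm}$ produces \form{classes}. The main substantive step is the flatness of $\pi_J$ combined with the multiplicity-one decomposition of $\pi_J^*[\Gm_J]$—both are geometric features of the Pink construction \re{pink}—while the rest is careful bookkeeping with projection formulas for refined Gysin maps.
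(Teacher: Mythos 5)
Your proof is correct and essentially mirrors the paper's: both rest on flatness of $\pi_J$, the decomposition of $\pi_J^{-1}(\Gm_J)$ supplied by Lemma \ref{L:grfrob}, the projection formula for $\pi_J$ and for $i_J$, and a M\"obius inversion over the Boolean lattice of subsets of $I$. The only cosmetic difference is the order of operations: you M\"obius-invert the relation $(\star)$ after intersecting with $\wt{\al}$, whereas the paper first reduces to the cycle-class identity $[\wt{\Gamma}]=\sum_{J\subseteq I}(-1)^{|J|}\,i_{J*}\pi^*_{J}[\Gamma_J]$ in $A_d(\wt{Y})$ (established as in \cite[Lemma 4.9]{Va}) and then pairs once with $\wt{\al}$.
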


\begin{proof}
By the projection formula (see \re{ref inters}(f)), we have equalities $\wt{\al}_J\cdot [\Gamma_J]=\wt{\al} \cdot (i_{J*}\pi^*_{J}[\Gamma_J])$. Thus it suffices to show the identity $[\wt{\Gamma}]=\sum_{J\subseteq I} (-1)^{|J|}i_{J*}\pi^*_{J}[\Gamma_J]$.
To prove the result, we argue as in \cite[Lemma 4.9]{Va}, where an untwisted version of this result is deduced from an analogue of \rl{grfrob}.
\end{proof}

\begin{Lem}\label{L:proj formula}
In the situation of \re{assumptions} and notation of \re{construction} assume that $q>\on{ram}(c'_2,\partial X'^0)$ and $q>\on{ram}(c^0_2)$.  Then we have an equality
\begin{equation} \label{Eq:local}
\tilde{\al} \cdot [\tilde{\Gamma}]= \on{deg}(f)([C^0]_{C^0}\cdot [\Gm^0]_{\Gm^0})=
\on{deg}(f)\on{deg}(c_2^0)_{\on{insep}}\cdot \#(C^0\cap\Gm^0)(k).
\end{equation}
\end{Lem}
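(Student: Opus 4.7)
The approach is to compute $\tilde\al \cdot [\tilde\Gm]$ in two stages: first localize the intersection from $\wt Y$ to the smooth open $X'^0 \times (X'^0)^{(q)} \subseteq \wt Y$ and apply the projection formula for the (proper) restriction $g^0$ of $g = f\times f^{(q)}$, then reduce the resulting intersection on $X^0 \times (X^0)^{(q)}$ to a sum of local lengths identified via \rl{isolated} and \rl{uh-etale}.

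For the first equality $\tilde\al \cdot [\tilde\Gm] = \deg(f)\,([C^0]_{C^0} \cdot [\Gm^0]_{\Gm^0})$, \rl{trans} together with the hypothesis $q > \ram(c'_2, \partial X'^{(q)})$ places $\wt C \cap \wt\Gm$ inside $j(X'^0 \times (X'^0)^{(q)})$. Since $\wt Y$ is proper (as $X'$ is) and hence so is $\wt\Gm$, this intersection is proper over $k$, and \re{ref inters}(e) gives $\tilde\al \cdot [\tilde\Gm] = j^*(\tilde\al) \cdot j^*([\tilde\Gm])$ computed in $X'^0 \times (X'^0)^{(q)}$. Unwinding the construction \re{construction}(d)--(f), the restriction $j^*(\tilde\al)$ is the image of $\al^0 = (g^0)^*([C^0])$ under the closed embedding $C'^0 \hookrightarrow X'^0 \times (X'^0)^{(q)}$, while $j^*([\tilde\Gm]) = [\Gm_{X'^0, q}]$ because $\pi$ is an isomorphism over this open. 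Using \re{ref inters}(c), the problem reduces to the identity $\al^0 \cdot [\Gm_{X'^0, q}] = \deg(f)\,([C^0]\cdot [\Gm^0])$ in the respective smooth ambients.

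For the latter, the crucial observation is that $g$ is proper and $g^{-1}(X^0 \times (X^0)^{(q)}) = X'^0 \times (X'^0)^{(q)}$, because $X'^0 = f^{-1}(X^0)$ (and similarly after the Frobenius twist); base change therefore makes $g^0$ proper. The restriction $g^0|_{\Gm_{X'^0, q}}$ corresponds under the graph isomorphisms to $f^0 \colon X'^0 \to X^0$, which is generically finite of degree $\deg(f)$, so $(g^0)_*[\Gm_{X'^0, q}] = \deg(f)\,[\Gm^0]$. Applying the projection formula \re{ref inters}(f) to $g^0$ yields
\[
\al^0 \cdot [\Gm_{X'^0, q}] = (g^0)^*([C^0]) \cdot [\Gm_{X'^0, q}] = [C^0] \cdot (g^0)_*[\Gm_{X'^0, q}] = \deg(f)\,([C^0]\cdot [\Gm^0]),
\]
which is the first equality.

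For the second equality, the assumption $q > \ram(c^0_2)$ together with the quasi-finiteness of $c^0_2$ (it is uh-\'etale) makes \rl{isolated} applicable at every $y \in (C^0\cap\Gm^0)(k)$, so this set is discrete; as $C^0$ is quasi-compact, it is in fact finite. Example \re{examples}(b) then gives $[C^0]_{C^0}\cdot [\Gm^0]_{\Gm^0} = \sum_{x} l(\C{O}_{C^0\cap \Gm^0, x})$. By \rl{isolated}, locally at each $x$ the intersection $C^0\cap\Gm^0$ coincides scheme-theoretically with the fiber $(c^0_2)^{-1}(c^0_2(x))$, and \rl{uh-etale}(b) identifies the length of this fiber with $\deg(c^0_2)_{\on{insep}}$; summing over $x$ finishes the proof. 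The principal subtlety of the argument is the properness of $g^0$, which relies squarely on the equality $X'^0 = f^{-1}(X^0)$ and is what permits the projection formula to transport intersections from $\wt Y$ down to $X^0 \times (X^0)^{(q)}$.
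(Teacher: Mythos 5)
Your proposal is correct and follows essentially the same route as the paper's own proof: restrict the intersection number from $\wt Y$ to the open $X'^0\times(X'^0)^{(q)}$ via \rl{trans} and \re{ref inters}(c),(e), push it down to $X^0\times(X^0)^{(q)}$ via the projection formula for $g^0$ together with $(g^0)_*[\Gm'^0]=\deg(f)[\Gm^0]$, and then identify $[C^0]_{C^0}\cdot[\Gm^0]_{\Gm^0}$ with $\deg(c^0_2)_{\mathrm{insep}}\#(C^0\cap\Gm^0)(k)$ through \re{examples}(b), \rl{isolated} and \rl{uh-etale}(b). The one point worth noting is that you make explicit an ingredient the paper leaves tacit — namely that $g^0$ is proper because $X'^0=f^{-1}(X^0)$ so $g^0$ is the base change of the proper $g$ — which is indeed the hypothesis needed to invoke \re{ref inters}(f).
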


\begin{Emp}
{\bf Remark.}
Since $q>\on{ram}(c^0_2)$, the intersection $C^0\cap\Gm^0$ is finite (by \rl{isolated}). Therefore the intersection number
$[C^0]_{C^0}\cdot [\Gm^0]_{\Gm^0}$ is defined (see \re{ref inters}(b)).
\end{Emp}

\begin{proof}
We set $\Gm'^0:=\Gm_{X'^0,q}$. We claim that we have a sequence of equalities
\[
\tilde{\al} \cdot [\tilde{\Gamma}]\overset{(1)}{=}\tilde{\al}_{\wt{C}} \cdot [\tilde{\Gamma}]_{\wt{\Gm}}\overset{(2)}{=}\al^0 \cdot [\Gamma'^0]_{\Gm'^0}
\overset{(3)}{=}(g^0)^*([C^0]_{C^0}) \cdot [\Gamma'^0]_{\Gm'^0}\overset{(4)}{=}\]\[\overset{(4)}{=}[C^0]_{C^0}\cdot (g^0)_*([\Gamma'^0]_{\Gm'^0})\overset{(5)}{=} \deg(f)([C^0]_{C^0}\cdot [\Gamma^0]_{\Gm^0}).
\]
Indeed,

$\bullet$ equality $(1)$ follows from \re{examples}(a);

$\bullet$ since $q>\on{ram}(c'_2,\partial X'^0)$, we have an inclusion $\tilde{C} \cap \tilde{\Gamma} \subseteq \pi^{-1}(X'^0\times X'^{(q)})$ (by \rl{trans}),  hence $\tilde{C} \cap \tilde{\Gamma} \subseteq j(X'^0\times X'^{(q)})$
(see \re{pink}(e)). So equality $(2)$ follows from observation \re{ref inters}(e) and equality $j_C^*(\tilde{\al}_{\wt{C}})=\al^0$
(use \re{construction}(c),(e));

$\bullet$ equality $(3)$ follows from equality $\al^0= (g^0)^*([C^0]_{C^0})$ (see \re{construction}(d));

$\bullet$  equality $(4)$ follows from the projection formula (see \re{ref inters}(f));

$\bullet$  equality $(5)$ would follow, if we show that
$(g^0)_*([\Gamma'^0]_{\Gm'^0})=\deg(f)[\Gamma^0]_{\Gm^0}$. To see it, notice that the restriction $g^0|_{\Gm'^0}:\Gm'^0\to\Gm^0$ is isomorphic to the projection $f^0:X'^0\to X^0$. In particular, it is generically finite of degree $\deg(f)$.

This shows the left equality of \form{local}. To see the right one, it suffices to show that
\begin{equation} \label{Eq:int}
[C^0]_{C^0}\cdot [\Gm^0]_{\Gm^0}=\sum_{y\in (C^0\cap\Gm^0)(k)}l(\C{O}_{C^0\cap\Gm^0,y})=\on{deg}(c_2^0)_{\on{insep}}\cdot \#(C^0\cap\Gm^0)(k).
\end{equation}
The first equality of \form{int} follows from \re{examples}(b), while for the second one it suffices to show that
\begin{equation} \label{Eq:int1}
l(\C{O}_{C^0\cap\Gm^0,y})=l(\C{O}_{(c^0_2)^{-1}(c^0_2(y)),y})=\on{deg}(c_2^0)_{\on{insep}}.
\end{equation}
Finally, the first equality in \form{int1} follows from \rl{isolated}, while the second one follows from \rl{uh-etale}(b).
\end{proof}

\begin{Lem}  \label{L:degree proj}
Let $\wt{\al}_{\emptyset}=\pi_*(\wt{\al})\in A_d(Y)$ be the class defined in \re{construction}(f). Then
we have an equality $g_*(\wt{\al}_{\emptyset})=\deg(f)^2[C]$ in $A_d(X\times X^{(q)})$.
\end{Lem}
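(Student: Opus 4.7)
Since $E_{\emptyset}=\pi^{-1}(Y)=\wt{Y}$ and $i_\emptyset$ is the identity, we have $\wt{\al}_{\emptyset}=\pi_*(\wt{\al})$. Fix the representative $\sum_i n_i[V_i]\in Z_d(C'^0)$ of $\al^0=(g^0)^*[C^0]$ chosen in \re{construction}(e), and let $V_i^{\mathrm{cl}}\subseteq Y$ denote the closure of $V_i$. Since $C'^0\subseteq X'^0\times (X'^0)^{(q)}\subseteq Y^0$ and $\pi$ is an isomorphism over $Y^0$, each $\bar{V}_i\subseteq\wt{C}$ maps birationally onto $V_i^{\mathrm{cl}}\subseteq C'\subseteq Y$, so $\pi_*[\bar{V}_i]=[V_i^{\mathrm{cl}}]$ and $\pi_*(\wt{\al})=\sum_i n_i[V_i^{\mathrm{cl}}]$ in $A_d(Y)$.

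The first step is to show $g_*(\pi_*\wt\al)=\lambda\,[C]$ for some $\lambda\in\B{Z}$. Indeed, $g$ is proper, and $g(C'^0)=g^0((g^0)^{-1}(C^0))=C^0$ is dense in $C$, whence $g(C')=C$. Thus each $g(V_i^{\mathrm{cl}})$ is a closed subvariety of the irreducible $d$-dimensional $C$, hence equal to $C$ or of strictly smaller dimension; by \re{pushforward.}(a), $g_*[V_i^{\mathrm{cl}}]=\delta_i\,[C]$ for some $\delta_i\in\B{Z}_{\geq 0}$, and $\lambda=\sum_i n_i\delta_i$.

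To compute $\lambda$, I restrict to the open subscheme $X^0\times (X^0)^{(q)}$. Let $j_0:X^0\times (X^0)^{(q)}\hra X\times X^{(q)}$ and $j':X'^0\times (X'^0)^{(q)}\hra Y$ be the open inclusions. Since $f^{-1}(X^0)=X'^0$ (by the definition $\partial X'=f^{-1}(\partial X)_{\mathrm{red}}$), the commutative square with horizontal maps $g^0$ and $g$ is Cartesian, so flat base change \re{pullback}(b) gives $j_0^*\circ g_*=g^0_*\circ (j')^*$. A direct inspection shows $(j')^*(\pi_*\wt\al)=\sum_i n_i[V_i^{\mathrm{cl}}\cap (X'^0\times (X'^0)^{(q)})]=\sum_i n_i[V_i]$, which coincides with $\al^0$ after the closed embedding $C'^0\hra X'^0\times (X'^0)^{(q)}$, while $j_0^*[C]=[C^0]$. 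Hence $\lambda\,[C^0]=g^0_*\al^0$ in $A_d(X^0\times (X^0)^{(q)})$.

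Finally, $g^0=f^0\times (f^0)^{(q)}$ is proper (as a base change of $f$ along $X^0\hra X$), generically finite of degree $\deg(g^0)=\deg(f)^2$, and a morphism of smooth schemes of dimension $2d$. The identity $g^0_*((g^0)^*[C^0])=\deg(f)^2\,[C^0]$ is the standard projection formula for the refined Gysin pullback; since $X^0\times (X^0)^{(q)}$ is not proper, I verify it by restricting further, via generic flatness, to a dense open $U\subseteq X^0\times (X^0)^{(q)}$ over which $g^0$ is finite flat of degree $\deg(f)^2$, where \re{gysin}(c) and \re{pullback}(b) deliver the identity directly, and then extending to all of $X^0\times (X^0)^{(q)}$ using that $A_d(C^0)=\B{Z}[C^0]$ (since $C^0$ is irreducible of dimension $d$). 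This yields $\lambda=\deg(f)^2$, as required. The main technical step is precisely this final projection formula on a non-proper ambient scheme; aside from that, the argument is a straightforward unwinding of definitions and an application of flat base change.
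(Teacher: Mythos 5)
Your argument follows the paper's strategy quite closely: reduce to a statement in $A_d(C^0)\cong\B{Z}$, and verify it by restricting to a dense open where $g^0$ becomes finite flat of degree $\deg(f)^2$. The preliminary computation of $\pi_*(\wt\al)=\sum_i n_i[V_i^{\mathrm{cl}}]$, the reduction to $g^0_*((g^0)^*[C^0])=\deg(f)^2[C^0]$ via flat base change along $j_0,j'$, and the final appeal to generic flatness are all in spirit identical to the paper's proof (which instead works directly with $\wt{g}_C:\wt{C}\to C$ and restricts along opens of $C$, but this is just a different packaging of the same reduction).

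There is, however, one genuine gap in the last step. You restrict to ``a dense open $U\subseteq X^0\times (X^0)^{(q)}$ over which $g^0$ is finite flat'' and then ``extend to all of $X^0\times(X^0)^{(q)}$ using $A_d(C^0)=\B{Z}[C^0]$.'' For this extension to work, the restriction map $A_d(C^0)\to A_d(C^0\cap U)$ must be injective, which requires $C^0\cap U\neq\emptyset$. This is not automatic: $C^0$ is a proper closed subscheme of $X^0\times(X^0)^{(q)}$ of dimension $d<2d$, so it could a priori lie entirely inside the non-flat locus of $g^0$, in which case your restriction would give $0=0$ and yield no information on $\lambda$. The paper avoids this by taking $U$ of the specific form $U_1\times U_1^{(q)}$ where $f$ is finite flat over $U_1\subseteq X^0$, and then invoking that $c_1^0$ and $c_2^0$ are dominant (both hold under the hypotheses of Theorem~\ref{T:main}(b), which are in force via~\ref{E:assumptions}) to conclude that $C^0\cap(U_1\times U_1^{(q)})\neq\emptyset$. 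You should add this observation; the rest of your argument is sound.
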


\begin{proof}
We have to show that $(g\circ\pi)_*(\wt{\al})=\deg(f)^2[C]$. Denote by $\wt{g}_C:\wt{C}\to C$ be the restriction of $g\circ\pi$. It suffices to show the equality $(\wt{g}_C)_*([\wt{\al}]_{\wt{C}})=\deg(f)^2 [C]_C$ in $A_d(C)=Z_d(C)$. Therefore it suffices to show the equality of pullbacks to any open non-empty subscheme $V$ of $C$.

Thus, taking pullback to $C^0$, using \re{pullback}(b) and equality $j_C^*(\wt{\al})=(g^0)^*([C^0]_{C^0})$, it suffices to show that $(g^0)_*(g^0)^*([C^0]_{C^0})=\deg(f)^2 [C^0]_{C^0}$.

Let $U\subseteq X^0$ be an open dense subscheme such that $f_U:f^{-1}(U)\to U$ is finite flat. Since $c^0_1$ and $c^0_2$ are dominant, the open subscheme $C_U:=C^0\cap (U\times U^{(q)})$ of $C^0$ is non-empty. Set $g_U:=f_U\times (f_U)^{(q)}$. Taking pullback to $C_U$, using \re{pullback}(b) and \re{gysin}(b),(c), it thus suffices to show the identity $(g_U)_*(g_U)^*([C_U]_{C_U})=\deg(f)^2 [C_U]_{C_U}$. But this is clear, because $(g_U)^*([C_U]_{C_U})=[g^{-1}(C_U)]_{g^{-1}(C_U)}$ (use \re{gysin}(b)), while the map $(g^0)^{-1}(C_U)\to C_U$ is finite flat of degree $\deg(f)^2$.
\end{proof}

\begin{Cor} \label{C:top cohom}
In the situation of \re{assumptions} and notation of \re{construction}, we have an equality
\[
\on{Tr}(F^*_{X',q} \circ H^{2d}(\wt{\al}_{\emptyset}))=\deg(f)\deg(c_1)q^{d}.
\]
\end{Cor}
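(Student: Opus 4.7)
The plan is to combine the general trace formula in \re{calculation} with the pushforward computation of Lemma \ref{L:degree proj}. Indeed, by \re{calculation} applied to the cycle $\wt{\al}_{\emptyset}\in A_d(X'\times X'^{(q)})$ and the relative Frobenius $\on{F}_{X',q}:X'\to X'^{(q)}$, which is a generically finite morphism of degree $q^d$, we have
\[
\on{Tr}(\on{F}_{X',q}^*\circ H^{2d}(\wt{\al}_{\emptyset}),H^{2d}(X',\ql))=\deg(\on{F}_{X',q})\cdot\deg(p_1|_{\wt{\al}_{\emptyset}})=q^d\cdot\deg(p_1|_{\wt{\al}_{\emptyset}}),
\]
where $\deg(p_1|_{\wt{\al}_{\emptyset}})\in\B{Z}$ is the integer characterized by $(p_1)_*(\wt{\al}_{\emptyset})=\deg(p_1|_{\wt{\al}_{\emptyset}})[X']$ (notation of \re{calculation}). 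Hence the assertion reduces to showing the equality $\deg(p_1|_{\wt{\al}_{\emptyset}})=\deg(f)\deg(c_1)$.

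To establish this, I would combine Lemma \ref{L:degree proj} with the compatibility of proper pushforwards with compositions. Namely, let $p_1^Y:Y=X'\times X'^{(q)}\to X'$ and $p_1^{X\times X^{(q)}}:X\times X^{(q)}\to X$ denote the first projections, so that $p_1^{X\times X^{(q)}}\circ g=f\circ p_1^Y$. Applying $(p_1^{X\times X^{(q)}})_*$ to the equality $g_*(\wt{\al}_{\emptyset})=\deg(f)^2[C]$ from Lemma \ref{L:degree proj}, we obtain
\[
(p_1^{X\times X^{(q)}})_*g_*(\wt{\al}_{\emptyset})=\deg(f)^2\cdot(p_1^{X\times X^{(q)}})_*([C])=\deg(f)^2\deg(c_1)[X],
\]
where in the last equality we used that $c_1$ is generically finite (as $C$ and $X$ are both of dimension $d$ and $c_1$ is dominant). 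On the other hand,
\[
f_*(p_1^Y)_*(\wt{\al}_{\emptyset})=f_*(\deg(p_1|_{\wt{\al}_{\emptyset}})[X'])=\deg(p_1|_{\wt{\al}_{\emptyset}})\deg(f)[X].
\]
Comparing the two expressions gives $\deg(p_1|_{\wt{\al}_{\emptyset}})\deg(f)=\deg(f)^2\deg(c_1)$, hence $\deg(p_1|_{\wt{\al}_{\emptyset}})=\deg(f)\deg(c_1)$, which combined with the first paragraph yields the desired identity.

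No serious obstacles are expected here: the proof is a direct assembly of \re{calculation}, the behavior of $\deg(\on{F}_{X',q})=q^d$, and Lemma \ref{L:degree proj}. The only mild point to verify is that $c_1$ is indeed generically finite, which follows from the standing assumptions of \re{assumptions} (since $C$ is irreducible of the same dimension as $X$, with $c_1$ dominant); this justifies reading off $(p_1^{X\times X^{(q)}})_*[C]=\deg(c_1)[X]$ via \re{pushforward.}(a).
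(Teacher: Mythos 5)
Your proposal is correct and follows essentially the same route as the paper's proof: both reduce the statement to $\deg(p_1|_{\wt{\al}_{\emptyset}})=\deg(f)\deg(c_1)$ via \re{calculation}, and then use the commutativity $f\circ p_1=p_1\circ g$, Lemma \ref{L:degree proj}, and $(p_1)_*[C]=\deg(c_1)[X]$ to establish this. (One incidental observation: you correctly record $\deg(\on{F}_{X',q})=q^{\dim X'}=q^d$; the printed proof in the paper writes ``$q^n$'' at this point, which is a typo for $q^d$.)
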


\begin{proof}
By \re{calculation}, we have to show that
\[
\deg(F^*_{X',q})\deg(p_1|_{\wt{\al}_{\emptyset}})=\deg(f)\deg(c_1)q^{d}.
\]
Since $\deg(F_{X',q})=q^{\dim X'}=q^n$, we thus have to show that $\deg(p_1|_{\wt{\al}_{\emptyset}})=\deg(f)\deg(c_1)$, that is, to show the equality $(p_1)_*(\wt{\al}_{\emptyset})= \deg(f)\deg(c_1)[X']$ in  $A_d(X')$.

Since $f_*([X'])=\deg(f)[X]$, it thus suffices to show an equality
\[
f_*(p_1)_*(\wt{\al}_{\emptyset})= (p_1)_*g_*(\wt{\al}_{\emptyset})=\deg(f)^2\deg(c_1)[X]
\]
in $A_d(X)$. Since $f\circ p_1= p_1\circ g:X'\times X'^{(q)}\to X$, the last equality follows from \rl{degree proj} and identity
$(p_1)_*[C]=\deg(c_1)[X]$.
\end{proof}

Now we are ready to show \rp{basic formula}.

\begin{Emp}
\begin{proof}[Proof of \rp{basic formula}]
Combining Lemmas \ref{L:inters} and \ref{L:proj formula}, we have an equality
\[
\on{deg}(f)\on{deg}(c_2^0)_{\on{insep}}\cdot \#(C^0\cap\Gm^0)(k)= \sum_{J\subseteq I}(-1)^{|J|}\wt{\al}_J\cdot[\Gm_J].
\]
Therefore the assertion follows from a combination of equalities
\[
\wt{\al}_J\cdot[\Gm_{J}]=\sum_{i=0}^{2(d-|J|)}(-1)^i\Tr(\on{F}_{X'_J,q}^*\circ H^i(\wt{\al}_J), H^i(X'_J,\ql))
\]
(see \form{traces}) and  \rco{top cohom}.
\end{proof}
\end{Emp}

\section{Proof of the main theorem}

\subsection{Review of results from Appendices}
In this subsection we will review results from Appendices, we need for the proof of \rt{main}.

\begin{Emp} \label{E:app}
{\bf Standard boundness results} (see Lemma \ref{L:gdeg} for assertions (a),(b) and section \re{pfapp} for the remaining assertions).
Let $X\subseteq\B{P}_k^n$, $X'\subseteq \B{P}^{m}_X$, and $Z\subseteq X$ be reduced  subschemes, and let
$f:X'\to X$ be the composition $X'\hra\B{P}^{m}_X\surj X$.

(a) We have an equality $\gdeg(X^{(q)})=\gdeg(X)$.

(b) Let $\ov{X}\subseteq\B{P}^n_k$ be the closure of $X$, and let $X_1,\ldots,X_r$ be all irreducible components of $X$. Then we have inequalities $\gdeg(\ov{X})\leq\gdeg(X)$, $r\leq\gdeg(X)$ and $\gdeg(X_i)\leq\gdeg(X)$ for all $i$.

(c) Let $Y\subseteq \B{P}_k^n$ be a reduced subscheme. Then $\gdeg((X\cap Y)_{\red})$  is bounded in terms of $n,\gdeg(X)$ and $\gdeg(Y)$.

(d) Assume that $Z\subseteq X$ either open or closed. Then $\gdeg((X\sm Z)_{\red})$  is bounded in terms of $n,\gdeg(X)$ and $\gdeg(Z)$.

(e) The ramification  $\ram(f,Z)$ and the geometric degree of $f^{-1}(Z)_{\red}\subseteq X'$ are bounded in terms of $n,m,\gdeg(X),\gdeg(X')$ and $\gdeg(Z)$.

(f) The geometric degree of $\ov{f(X')}\subseteq X$ is bounded in terms of $n,m,\gdeg(X)$ and $\gdeg(X')$.

(g) Assume that $X$ and $X'$ are irreducible and $f$ is generically finite. Then $\deg(f)$ and $\ram(f)$ are bounded in terms of $n,m,\gdeg(X)$ and $\gdeg(X')$.

(h) Let $U\subseteq X$ be the largest open subscheme such that $U$ is smooth over $k$. Then $\gdeg(U)$  is bounded in terms of $n,\gdeg(X)$.

(i) In the situation of (g), there exists an open dense subscheme $U\subseteq X'$ such
$f|_U$ is uh-\'etale (see \re{uh-etale}) and $\gdeg(U)$  is bounded in terms of $n,m,\gdeg(X)$ and $\gdeg(X')$.

(j) Assume that $Z\subseteq X$ is closed. Then there exists a closed embedding $\Bl_Z(X)\hra\B{P}^{n'}_X$ over $X$ such that
$n'$ and geometric degree of $\Bl_Z(X)\subseteq\B{P}^{n'}_X$ are bounded in terms of $n,\gdeg(X)$ and $\gdeg(Z)$.

(k) Assume that $X$ is irreducible and $Z\subseteq X$ is closed. Then there exists an alteration $\wt{f}:\wt{X}\to X$
and a closed embedding $\wt{X}\hra\B{P}^{n'}_X$ over $X$ such that $\wt{X}$ is smooth over $k$, $\wt{f}^{-1}(Z)_{\red}\subseteq\wt{X}$
is a union of smooth divisors with normal crossings such that $n'$ and $\gdeg(\wt{X})$ are bounded in terms of $n,\gdeg(X)$ and $\gdeg(Z)$.
\end{Emp}

\begin{Emp} \label{E:norm}
{\bf Norm of a cycle class.} Suppose that we are in the situation of \re{app}.

(a) For every $\al\in A_*(X)$, we denote by $|\al|$ the minimum
of $\sum_j |n_j|\gdeg(Z_j)$ taken over all representatives $\al'=\sum_j n_j [Z_j]\in Z_*(X)$ of $\al$, where  $Z_j\subseteq X$ are irreducible subvarieties and  $n_j\in\B{Z}$.

(b) Assume that $X\subseteq\B{P}^n_k$ and $X'\subseteq \B{P}^{m}_X$ are closed. Then $f$ is proper, and  it follows from \rl{pullback}(a) that for every $\al\in A_*(X')$ we have an inequality $|f_*(\al)|\leq|\al|$.

(c) Assume that $X$ and $X'$ are smooth and connected. Then we have a refined Gysin pullback
map $f^*:A_*(Z)\to A_{*}(f^{-1}(Z))$ (see \re{gysin}). Moreover, it follows from \rco{bound gysin} that there exists
a constant $M$ depending on $n,m,\gdeg(X),\gdeg(X')$ and $\gdeg(Z)$ such that for every class $\al\in A_*(Z)$, we have an inequality
$|f^*(\al)|\leq M|\al|$.
\end{Emp}

\begin{Emp} \label{E:spectral}
{\bf Main estimate.}
Let $k$ be an algebraically closed field of characteristic $p>0$, $q$ a power of $p$, and $X\subseteq\B{P}^n_k$ a smooth closed subvariety of dimension $d$. By \rco{bound trace}, there exists a constant $M$ depending on $n$ and $\deg(X)$ such that for every $\al\in A_{d}(X \times X^{(q)})$ and $i\in\B{N}$ the trace $\Tr(F^*_{X,q} \circ H^{i}(\al))\in\B{Z}$ (see \re{purity}) satisfies
\[
|\Tr(F^*_{X,q} \circ H^{i}(\al))|\leq M|\al| q^{i/2}.
\]
\end{Emp}


\subsection{Main reduction}

In this subsection we are going to reduce Theorem \ref{T:main} to a particular case of Theorem \ref{T:main}(b).



\begin{Prop}\label{P:reduction}
To prove Theorem \ref{T:main} it suffices to show that the ``the conclusion of Theorem \ref{T:main}(b) holds for quadruples satisfying assumptions  \re{assumptions}".

More precisely, in order to prove \rt{main} it suffices to show that for every $n,\dt\in\B{N}$ there exists a constant $M$ such that the
conclusion of Theorem \ref{T:main}(b) holds for all quadruples  $(k,q,X^0,C^0)$ satisfying assumptions \re{assumptions}.





\end{Prop}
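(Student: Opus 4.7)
The plan is to establish a chain of reductions, controlling geometric degrees at each step via the stability properties of \re{app}, so that the hypothesized bound for quadruples satisfying \re{assumptions} (depending only on $(n,\delta)$) pulls back to a bound (with a modified constant) for the original quadruple.

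First I would reduce \rt{main}(a) to \rt{main}(b) by Noetherian induction on $\dim(C^0)$. Decompose $C^0 = \bigsqcup_\alpha C^0_\alpha$ into irreducible components; by \re{app}(b) the number of components and each $\gdeg(C^0_\alpha)$ are bounded in terms of $\delta$. For a component where $c^0_1|_{C^0_\alpha}$ dominates an irreducible component of $X^0$ (of bounded geometric degree by \re{app}(b)), part (b) applies directly. Otherwise, $Z_\alpha := \overline{c^0_1(C^0_\alpha)}$ is a proper closed subset of $X^0$ of bounded geometric degree via \re{app}(f); since $C^0_\alpha \cap \Gm^0 \subseteq (c^0_\alpha)^{-1}(Z_\alpha \times Z_\alpha^{(q)})$, the inductive hypothesis for part (a) in strictly lower dimension handles this case.

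Next, given $(k,q,X^0,C^0)$ as in \rt{main}(b), I would replace $X^0$ first by its smooth locus (open dense with bounded geometric degree by \re{app}(h)) to secure \re{assumptions}(a). Then by \re{app}(i) applied to the generically finite dominant map $c^0_2$, there is an open $W \subseteq C^0$ of bounded geometric degree on which $c^0_2$ is uh-\'etale; since $c^0_2$ is quasi-finite, $\overline{c^0_2(C^0 \sm W)}$ has dimension $< \dim X^0$, so a further shrinking of $X^0$ (degrees controlled via \re{app}(c), (d)) secures \re{assumptions}(b). Finally, applying \rp{blowup} to the projective closures $c : C \to X \times X^{(q)}$ produces a blow-up $\pi : \wt{X} \to X$ (embeddable in some $\B{P}^{n'}$ with bounded geometric degree by \re{app}(j)) and an open $V \subseteq X^0$ such that the strict preimage $\wt{c}$ of $c$ satisfies \re{assumptions}(c), while $\gdeg(\wt{C})$ stays bounded by \re{app}(e), (f).

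Because $\pi$ is an isomorphism over $V$, the invariants $\dim(\wt{C}^0)$, $\deg(\wt{c}^0_1)$ and $\deg(\wt{c}^0_2)_{\mathrm{insep}}$ of the final quadruple coincide with those of the original, so the leading term of the estimate in \rt{main}(b) is preserved; the discrepancy between $\#(C^0 \cap \Gm^0)(k)$ and $\#(\wt{C}^0 \cap \wt{\Gm}^0)(k)$ is supported over $X^0 \sm V$, a subvariety of strictly smaller dimension, and is bounded by part (a) of \rt{main} applied inductively, giving an error of order $O(q^{\dim(C^0)-1/2})$ that is absorbed into the final constant. The hard part will be the simultaneous Noetherian induction on $\dim(C^0)$ for both (a) and (b): at each step one must check that the geometric-degree bounds required to invoke the hypothesis persist under taking components, shrinking and blowing up, and that every error term produced by these manipulations is genuinely of strictly smaller dimension and hence controlled by the preceding inductive case of (a).
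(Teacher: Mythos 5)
Your plan mirrors the paper's reduction step for step: Noetherian induction on $\dim(C^0)$ to deduce (a) from (b); passing to the smooth locus for \re{assumptions}(a); using the uh-\'etale locus of $c^0_2$ for \re{assumptions}(b); invoking Proposition~\ref{P:blowup} and re-embedding the restriction $C_V$ into $\wt{X}\times\wt{X}^{(q)}$ for \re{assumptions}(c); and absorbing all errors into lower-dimensional intersections controlled by the inductive case of (a). This is exactly the paper's argument, and the degree-tracking you invoke (\re{app}(b)--(k), \rcl{app}) is the right machinery.

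One small gap worth tightening in the (a)$\Rightarrow$(b) reduction: once you pass to an irreducible component $C^0_\alpha$ with $c^0_1|_{C^0_\alpha}$ dominating an irreducible component $X^0_\beta$ of $X^0$, it is not automatic that $c^0_2(C^0_\alpha)$ lands in $(X^0_\beta)^{(q)}$, so one cannot quite ``apply (b) directly'' to the quadruple $(k,q,X^0_\beta,C^0_\alpha)$. The paper handles this cleanly by setting $Z:=\overline{c_1(C)}$ and $C_Z:=c_2^{-1}(Z^{(q)})_{\red}$; the observation $C^0\cap\Gm_{X^0,q}=C_Z\cap\Gm_{Z,q}$ lets one distinguish $C_Z=C^0$ (in which case $(k,q,Z,C^0)$ genuinely satisfies the hypotheses of (b)) from $C_Z\subsetneq C^0$ (handled by the lower-dimensional inductive case of (a)). Your dichotomy needs the analogous check that the forward image under $c^0_2$ also stays in the same component, or else falls into the lower-dimensional bin. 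Apart from this, the proposal matches the paper's route.
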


\begin{proof}
For convenience of the reader, we divide the proof into steps. First we are going to show that Theorem \ref{T:main}(a) follows from Theorem \ref{T:main}(b).

\begin{Emp} \label{E:step1}
Note that it suffices to show that for every fixed $d\in\B{N}$ the conclusion of Theorem \ref{T:main}(a) holds for quadruples such that $\on{\dim}(C^0)=d$. Moreover, by induction, we can assume that the conclusion of Theorem \ref{T:main}(a) holds for quadruples such that $\on{\dim}(C^0)<d$.
\end{Emp}

\begin{Emp} \label{E:step2}
We claim that it suffices to show Theorem \ref{T:main}(a) for quadruples such that $C^0$ is irreducible.

\vskip 4truept

\noindent Indeed, let $(k,q,X^0,C^0)$ be a quadruple as in Theorem \ref{T:main}(a), and let
$C_1,\ldots,C_r$ be all irreducible components of $C^0$. Since $r\leq\on{gdeg}(C)$, $\on{gdeg}(C_i)\leq\on{gdeg}(C^0)$ (see
\re{app}(b)), $\on{dim}(C_i)\leq\on{dim}(C^0)$ and $(C^0\cap \Gamma^0)=\cup_i(C_i \cap \Gamma^0)$, we see that
if each quadruple $(k,q,X^0,C_i)$ satisfies inequality (\ref{Main1}) with constant $M$, then quadruple
$(k,q,X^0,C^0)$ satisfies inequality (\ref{Main1}) with constant $\dt M$.
\end{Emp}

\begin{Emp} \label{E:step3}
We claim that it suffices to show the conclusion of Theorem \ref{T:main}(a) for quadruples satisfying assumptions of  Theorem \ref{T:main}(b).

\vskip 4truept

\noindent Indeed, by \re{step1} and \re{step2}, it suffices to show the conclusion of Theorem \ref{T:main}(a) for quadruple  $(k,q,X^0,C^0)$ such that $C^0$ is irreducible and $\on{dim}(C^0)=d$. Consider closed reduced subschemes $Z:=\bar{c_1(C)}\subseteq X$ and $C_Z:=c_2^{-1}(Z^{(q)})_{\red}\subseteq C^0$. Then $C_Z\subseteq Z\times Z^{(q)}$ is a closed reduced subscheme, and we have an equality
\begin{equation} \label{reduction step}
C^0\cap\Gamma_{X^0,q}=C_Z\cap\Gamma_{Z,q}.
\end{equation}
Moreover, it follows from a combination of \re{app}(a),(e),(f) that geometric degrees of $Z$ and $C_Z$ are bounded in terms of $n$ and $\dt$.

If $C_Z\subsetneq C^0$, then $\dim(C_Z)<d$, thus the assertion follows from equality (\ref{reduction step}) and induction hypothesis (see \re{step1}). Therefore we can assume that $C_Z=C^0$, and morphism $c_1:C^0\to Z$ is dominant. Thus quadruple $(k,q,Z,C^0)$ satisfies all the assumptions of Theorem \ref{T:main}(b), thus the assertion follows from equality (\ref{reduction step}).

\end{Emp}

\begin{Emp} \label{E:step4}
Now we claim that \rt{main}(b) implies \rt{main}(a).

\vskip 4truept

\noindent Indeed, by \re{step3}, it suffices to show the conclusion of Theorem \ref{T:main}(a) for quadruples $(k,q,X^0,C^0)$ from  Theorem \ref{T:main}(b). Since $\deg(c_1^0)$ is bounded in terms of $n$ and $\dt$ (see \re{app}(g)), the assertion follows.
\end{Emp}

Now we are going to show that it suffices to show the conclusion of Theorem \ref{T:main}(b) for quadruples satisfying assumptions  \re{assumptions}(a)-(c).

\begin{Emp} \label{E:step5}
First we claim that suffices to show the conclusion of Theorem \ref{T:main}(b) for quadruples $(k,q,X^0,C^0)$ such that $X^0$ is smooth.

\vskip 4truept

\noindent Indeed, let $(k,q,X^0,C^0)$ be a quadruple as in Theorem \ref{T:main}(b), let $U\subseteq X^0$ be the
largest smooth open dense subscheme, and set
\[
Z:=(X^0\sm U)_{\on{red}}, C_U:=C\cap (U\times U^{(q)})\text{ and }
C_Z:=(C^0\cap (Z\times Z^{(q)}))_{\on{red}}.
\]

Then it follows from a combination of \re{app}(a),(c),(d),(e),(h) that geometric degrees of $U, Z, C_U$ and  $C_Z$  are bounded in terms of $n$ and $\dt$. Since $\on{dim}(C_Z)<\dim(C^0)$, the inequality (\ref{Main1}) for quadruple $(k,q,Z,C_Z)$ holds by the induction hypothesis (see \re{step1}). Finally, since  $C^0\cap \Gamma_{X^0,q}$ decomposes as a union of $C_U\cap \Gamma_{U,q}$ and $C_Z \cap \Gamma_{Z,q}$, the inequality (\ref{Main2}) for $(k,q,X^0,C^0)$ follows from that for  $(k,q,U,C_U)$.
\end{Emp}

\begin{Emp} \label{E:step6}
Next we claim that suffices to show the conclusion of Theorem \ref{T:main}(b) for quadruples $(k,q,X^0,C^0)$ satisfying the assumptions \re{assumptions}(a),(b).

\vskip 4truept

\noindent Indeed, let $(k,q,X^0,C^0)$ be a quadruple satisfying \re{assumptions}(a). Then by \re{app}(i) there exists an open dense subscheme
$V\subseteq C^0$ subset such that $c_2^0|_{V}$ is uh-\'etale (see \re{uh-etale}) and $\gdeg(V)$ is bounded in terms of $n$ and $\dt$.
We set $U:=X^0\sm\ov{c^0_1(C^0\sm V)}\subseteq X^0$ and $C_U:=C^0\cap (U\times U^{(q)})$. Then $\dim(C^0\sm V)<\dim(C^0)=\dim(X^0)$, thus $U\subseteq X^0$ is open dense. By construction, we have inclusions $C_U\subseteq c_1^{-1}(U)\subseteq V$, thus $c_2^0|_{C_U}$ is uh-\'etale.
Then arguing as in \re{step5}, we can replace $(k,q,X^0,C^0)$ by $(k,q,U,C_U)$, thus assuming that $c_2^0$ is uh-\'etale.
\end{Emp}

\begin{Emp} \label{E:step7}
Finally, we claim that it suffices to show the conclusion of Theorem \ref{T:main}(b) for quadruples $(k,q,X^0,C^0)$ satisfying the assumptions \re{assumptions}(a)-(c).

\vskip 4truept

\noindent Indeed, by \re{step6}, it suffices to show the conclusion of Theorem \ref{T:main}(b) for quadruples satisfying the assumptions \re{assumptions}(a),(b). Let $(k,q,X^0,C^0)$ be any such quadruple, and let $V\subseteq X^0$  and  $\pi:\wt{X}\to X$ be the open subscheme and the blow-up constructed in Proposition \ref{P:blowup}. The proof of the following assertion will be given in the Appendix (see \re{pfclapp}).

\begin{Cl} \label{C:app}
(a) The geometric degree of $V\subseteq X^0$ is bounded in terms of $n$ and $\dt$.

(b) There exist $n'\in\B{N}$ and a closed embedding $\wt{X}\hra\B{P}^{n'}_X\subseteq\B{P}^{nn'+n+n'}_k$ over $X$ such that $n'$ and $\gdeg(\wt{X})$ are bounded in terms of  $n$ and $\dt$.
\end{Cl}

Set $C_V:=C\cap (V\times V^{(q)})$. Using \rcl{app}(a) and arguing as in \re{step5} one sees that the conclusion of \rt{main}(b) for $(k,q,X^0,C^0)$ follows from that for $(k,q,V,C_V)$. Next, since $\pi|_V:\pi^{-1}(V)\to V$ is an isomorphism, we have an open embedding $V\cong \pi^{-1}(V)\hra \wt{X}$ with dense image, which induces the embedding $C_V\hra V\times V^{(q)}\hra \wt{X}\times \wt{X}^{(q)}$.
Then combining \rcl{app} and \re{app}(a),(c),(e) we see that the geometric degrees of $V\subseteq \wt{X}$ and $C_V\subseteq\wt{X}\times \wt{X}^{(q)}$ are bounded in terms of $n$ and $\dt$. In other words, the conclusion of \rt{main}(b) for $(k,q,V\subseteq\B{P}^n_k,C_V)$ follows from that for $(k,q,V\subseteq\B{P}^{nn'+n+n'}_k,C_V)$.

Now the reduction is immediate. Indeed, let $\wt{C}$ be the closure of the image of the embedding $C_V\hra V\times V^{(q)}\hra \wt{X}\times \wt{X}^{(q)}$, and let $\wt{c}:\wt{C}\hra\wt{X}\times \wt{X}^{(q)}$ be the inclusion map. By \rp{blowup}, the closed subset $\wt{X}\sm V\subseteq \wt{X}$ is locally $\wt{c}$-invariant, hence the proof of \rp{reduction} is complete.
\end{Emp}
\end{proof}

\subsection{Completion of the proof}

\begin{Emp}
{\bf Reformulation of the problem.}
It follows from a combination \rp{reduction} and \rp{basic formula} that it suffice to show that for every $n,\dt\in\B{N}$ there exists
a constant $M$ satisfying the following property:

For every quadruple satisfying assumptions \re{assumptions}, there exists an alteration $f:X'\to X$ as in \re{construction}(a) and a cycle $\wt{\al}^0\in A_d(C'^0)$ as in \re{construction}(e) such that
\begin{equation} \label{Eq:first}
\on{ram}(c'_2,\partial X'^0)\leq M, \on{ram}(c^0_2)\leq M, |I|\leq M;
\end{equation}
and for all $J\subseteq I$ and $i\in\B{N}$, the trace $\on{Tr}(F^*_{X'_J,q} \circ H^{i}(\wt{\al}_{J}))\in\B{Z}$ satisfies
\begin{equation} \label{Eq:second}
\left|\on{Tr}(F^*_{X'_J,q} \circ H^{i}(\wt{\al}_{J}))\right|\leq Mq^{i/2}.
\end{equation}
\end{Emp}


\begin{Emp} \label{E:Step 1}
Using \re{app}(k), there exist $n',\dt'\in\B{N}$ such that for every quadruple satisfying assumptions \re{assumptions}, there exists an alteration $f:X'\to X$ as in \re{construction}(c) and a closed embedding $X'\hra \B{P}^{n'}_{X}$ over $X$ such that
the (geometric) degree of $X'$ is at most $\dt'$.
\end{Emp}

From now on we assume that we are given a quadruple satisfying assumptions \re{assumptions}, and a pair $(n',\dt')$ and a closed embedding $X'\hra \B{P}^{n'}_{X}$ as in \re{Step 1}.

\begin{Emp} \label{E:Step 2}
(a) By \re{app}(d),(e), the geometric degree of $\p X'\subseteq X'$ is bounded in terms of $n'$ and $\dt'$, hence in terms of $n$ and $\dt$.
Then, by \re{app}(b),(c) the cardinality $|I|$ and geometric degrees of $X'_i$ and $X'_J$ are bounded in terms of $n$ and $\dt$ as well.

(b) Using \re{app}(b),(e), we see that the geometric degree of $C'=\ov{C'^0}\subseteq X'\times X'^{(q)}$ is bounded in terms of $n$ and $\dt$. Hence combining (a) with \re{app}(e),(g) we see that $\on{ram}(c^0_2)$ and $\on{ram}(c'_2,\partial X'^0)$ are bounded in terms of $n$ and $\dt$. This proves existence of $M$ satisfying \form{first}.

(c) By a combination of (a) and \re{app}(j) there exists a pair $n'',\dt''\in\B{N}$ and
a closed embedding $\wt{Y}\hra \B{P}^{n''}_Y$ over $Y$ such that $\gdeg(\wt{Y})\leq \dt''$.

(d) Combining (c) with \re{app}(e), the geometric degrees of $X'^0\times (X'^0)^{(q)}\subseteq\wt{Y}$ and
$E_J\subseteq\wt{Y}$ are bounded in terms of $n$ and $\dt$.
\end{Emp}

From now on we assume that we have chosen pair $(n'',\dt'')$ and a closed embedding $\wt{Y}\hra \B{P}^{n''}_{Y}$ satisfying \re{Step 2}(c).

\begin{Emp} \label{E:Step 3}
(a) Combining \re{Step 2}(d) and \re{norm}(c), we see that the norm $|\al^0|$ of the class $\al^0\in A_d(C'^0)$ (see \re{construction}(d)) is bounded in terms of $n$ and $d$.

(b) Choose a cycle $\wt{\al}^0=\sum_i n_i [V_i]_{C'^0}$ representing $\al^0$ such that
$|\al^0|=\sum_i|n_i| \on{gdeg}(V_i)$ (see \re{norm}(a) and \re{construction}(d)). We claim that this cycle satisfies the required properties.

(c) Using \re{app}(b), the cycle class
$\wt{\al}=\sum_i n_i [\ov{V_i}]\in A_d(\wt{Y})$ (see \re{construction}(f)) satisfies
\[
|\wt{\al}|\leq \sum_i |n_i| \gdeg(\ov{V_i})\leq \sum_i |n_i| \gdeg (V_i)=|\al^0|.
\]
Hence, by (a), the norm $|\wt{\al}|$ is bounded in terms of $n$ and $\dt$.

(d) Combining (c), \re{Step 2}(d) and \re{norm}(b),(c), the norm of $\wt{\al}_J:=(\pi_J)_*(i_J)^*\wt{\al}\in A_d(X'_J\times (X'_J)^{(q)})$
is therefore bounded in terms of $n$ and $\dt$.

(e) Since $\gdeg(X'_J)$ is bounded in terms of $n$ and $\dt$ (see \re{Step 2}(a)), the existence
of constant $M$ satisfying \form{second} follows from a combination of (d) and \re{spectral}.
\end{Emp}

\appendix

\section{Bounded collections}\label{Uniform_Projective}

\subsection{Stability of bounded collections under standard operations}.


In this section we introduce {\em bounded collections} and show that standard operations
preserve {\em boundness}.

%

%
%
%
%

\begin{Not}\label{N:bounded}
Let $\Lambda$ be a set (or a class).
%
%

(a) We say that a collection of morphisms $\{f_{\al}:X_{\al}\to Y_{\al}\}_{\al\in\La}$, where $f_{\al}$ is a morphism of schemes of finite type over an algebraically closed field $k_{\al}$, is {\em bounded}, if there exists a scheme $S$ of finite type over $\B{Z}$ and a morphism of schemes $f:X\to Y$ of finite type over $S$ such that for every $\al\in \La$ there exists a geometric point $s_{\al}\in S(k_{\al})$ such that $f_{\al}$ is isomorphic to the geometric fiber $f_{s_{\al}}$ of $f$ over $s_{\al}$. In this case, we say that a family $f:X\to Y$ {\em parameterizes} collection $\{f_{\al}\}_{\al}$ over $S$.

(b) Similarly (but easier),  we say that collection $\{X_{\al}\}_{\al\in\La}$ of schemes of finite type over $k_{\al}$ is {\em bounded},
if there exists a scheme $S$ as in (a) and a scheme $X$ of finite type over $S$ such that each $X_{\al}$ is isomorphic to a geometric fiber $X_{s_{\al}}$ over some $s_{\al}\in S(k_{\al})$. Again we say that family $X$ {\em parameterizes} collection  $\{X_{\al}\}_{\al}$ over $S$.

(c) Let $\C{P}$ be a class of morphisms of schemes, which is stable under pullbacks. We say that a bounded collection $\{f_{\al}\}_{\al}$ of morphisms $f_{\al}\in\C{P}$ is {\em $\C{P}$-bounded}, if there is a parameterizing family $f$, which belongs to $\C{P}$.
In particular, we can talk about bounded collections of open (resp. closed, resp. locally closed) embeddings.

(d) We say that a collection $\{Y_{\al}\subseteq X_{\al}\}_{\al}$, where $Y_{\al}\subseteq X_{\al}$ is an open (resp. closed, resp. locally closed) subscheme, is bounded, if the inclusions $\{Y_{\al}\hra X_{\al}\}_{\al}$ form a bounded collection of open (resp. closed, resp. locally closed)
embeddings.

(e) More generally, we define bounded collections of more complicated diagrams. For example,

\quad (i) we say that a collection
$\{X_{\al}\subseteq\B{P}^n_{k_{\al}}\}_{\al}$, where $X_{\al}\subseteq\B{P}^n_{k_{\al}}$ is a (locally) closed subscheme, is bounded, if it is parameterized by a (locally) closed subscheme $X\subseteq\B{P}^n_S$ for  $S$ as in (a).

\quad (ii) we say that a collection
$\{X_{\al}\subseteq\B{P}^n_{k_{\al}},X'_{\al}\subseteq\B{P}^m_{X_{\al}}\}_{\al}$, where $X_{\al}\subseteq\B{P}^n_{k_{\al}}$ and
$X'_{\al}\subseteq\B{P}^m_{X_{\al}}$ are (locally) closed subschemes  is bounded, if it is parameterized by a pair (locally) closed subschemes $X\subseteq\B{P}^n_S, X'\subseteq\B{P}^m_X$ for  $S$ as in (a).
\end{Not}

The following standard lemma is central for what follows.

 \begin{Lem}\label{L:reduced irreducible}
 Let $f:X\to S$ be between a morphism of schemes of finite type over $\B{Z}$. Then there exists a surjective morphism $S'\to S$, where $S'$ is a scheme of finite type over $\B{Z}$ that the pullback $X':=X\times_S S'$ satisfies the following properties:

 (a) All geometric fibers of $X'_{\red}\to S'$ are reduced.

 (b) Let $X'_1,\ldots,X'_m$ be all the irreducible components of $X'_{\red}$. Then all non-empty geometric fibers of $X'_i\to S'$ are integral.
 \end{Lem}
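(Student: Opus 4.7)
The plan is Noetherian induction on $\dim S$, combined with (i) a finite cover of $S$ engineered to improve the generic-fiber behavior and (ii) standard spreading out of generic-fiber properties.

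First I make preliminary reductions. Since the conclusion only involves $X'_{\red}$, I may replace $X$ by $X_{\red}$ and assume $X$ reduced. The morphism $S_{\red}\to S$ is a universal homeomorphism, so I may also assume $S$ reduced; then passing to the (finite surjective) disjoint union of the reduced irreducible components of $S$ reduces to the case of $S$ integral with function field $K$ and generic point $\eta$.

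Next I analyze the generic fiber. The scheme $X_\eta$ is reduced and of finite type over $K$ with finitely many generic points whose residue fields $k_1,\dots,k_r$ are finitely generated over $K$. Because $K$ is itself finitely generated over its prime field, I can find a finite extension $L/K$ (obtained, in positive characteristic, as a finite purely inseparable extension rendering each $k_i\otimes_K L$ geometrically reduced, followed by a finite separable extension splitting the geometric components) such that $(X_\eta\otimes_K L)_{\red}$ is a disjoint union of geometrically integral $L$-varieties. Taking $S_1$ to be the normalization of $S$ in $L$, which is finite over $S$ since $S$ is excellent, yields a finite surjective cover $S_1\to S$ with $S_1$ of finite type over $\B{Z}$. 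The reduced generic fiber of $X_1:=X\times_S S_1$ is then a disjoint union $\bigsqcup_i Y_i$ of geometrically integral $L$-varieties, and the closures $Z_i\subseteq (X_1)_{\red}$ of the $Y_i$ are precisely the $S_1$-dominating irreducible components of $(X_1)_{\red}$. Applying the standard spreading-out/openness results for ``geometrically reduced'' and ``geometrically integral'' fibers (EGA IV, \S9), and shrinking further so that every non-dominating irreducible component of $(X_1)_{\red}$ has image disjoint from $U$, I obtain a dense open $U\subseteq S_1$ over which all geometric fibers of $(X_1)_{\red}\to S_1$ are reduced, the irreducible components of $(X_1)_{\red}\times_{S_1} U$ are exactly the $Z_i\times_{S_1} U$, and each such pullback has geometrically integral or empty fibers.

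Finally, applying Noetherian induction to $X\times_S T\to T$, where $T:=(S_1\sm U)_{\red}$ has dimension strictly less than $\dim S_1=\dim S$, yields a finite-type surjection $S_T\to T$ satisfying the required properties; then $S':=U\sqcup S_T$ mapping to $S$ through $S_1$ does the job. The main obstacle is the generic-fiber step in positive characteristic: arranging \emph{geometric} reducedness by a \emph{finite} rather than infinite (perfect-closure) extension $L/K$. This uses crucially that $K$ is finitely generated over its prime field, so the inseparability exponents of the $k_i/K$ are bounded and a finite purely inseparable enlargement of $K$ suffices.
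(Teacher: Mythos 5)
Your proof is correct and follows the same strategy as the paper's: Noetherian induction on $S$ to reduce to the case of an integral base (where it suffices to find a \emph{dominant} cover), then a finite cover of $S$ coming from a finite extension of the function field that makes the generic fiber geometrically reduced with geometrically integral components, followed by spreading out over a dense open. Where the paper simply cites Stacks Project Tags 0550, 0551, 0559, 0578 and EGA IV, Th\'eor\`eme 9.7.7, you unpack their content explicitly, but there is no substantive difference in the argument.
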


 \begin{proof}
 By the Noetherian induction, we can assume that for every closed subscheme $Z\subseteq S$ the assertion holds for the morphism
 $f|_{Z}:f^{-1}(Z)\to Z$. Therefore we can replace $S$ by an open non-empty subscheme. In other words, we can assume that
 $S$ is integral, and it suffices to show the existence a dominant morphism $S'\to S$ satisfying the required properties. Then assertion (a) follows from \cite[Tag 0550 and Tag 0578]{Stacks}, while assertion (b) from
 \cite[Tag 0551 and Tag 0559]{Stacks}. Alternatively, both results can be deduced from \cite[Th\`eor\'eme 9.7.7]{EGAIV(III)}.
 \end{proof}





\begin{Cor}\label{C:reduced irreducible}
Let $\{X_{\alpha}\}_{\al}$ be a bounded collection. Then the collections $\{(X_{\al})_{\red}\}_{\al}$ and
$\{X_{\alpha,i_{\al}}\}_{\al,i_{\al}}$, where $X_{\alpha,i_{\al}}$ runs over the set of all irreducible components of $X_{\al}$, are bounded.
\end{Cor}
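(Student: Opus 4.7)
The plan is to apply \rl{reduced irreducible} directly to any parameterizing family of $\{X_\alpha\}_\alpha$. First I fix data $(S, X \to S, s_\alpha \in S(k_\alpha))$ with $S$ of finite type over $\B{Z}$, $X$ of finite type over $S$, and $X_\alpha \cong X_{s_\alpha}$; this exists by the boundedness assumption. Applying the lemma to $X \to S$ yields a surjection $S' \to S$ of finite type $\B{Z}$-schemes such that, after base change to $S'$, both conditions (a) and (b) hold for $X' := X \times_S S'$. I then use $X' \to S'$ as the new parameterizing family, with the irreducible components of $X'_{\red}$ denoted $X'_1, \ldots, X'_m$.

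A small necessary preliminary step is to lift each $s_\alpha$ to a geometric point $s'_\alpha \in S'(k_\alpha)$. Since $S' \to S$ is surjective of finite type, the fiber over $s_\alpha$ is a non-empty scheme of finite type over the algebraically closed field $k_\alpha$, so the Nullstellensatz produces a $k_\alpha$-rational point, giving the desired lift with $X_\alpha \cong X'_{s'_\alpha}$. For the first assertion, I argue that $(X'_{\red})_{s'_\alpha} = (X_\alpha)_{\red}$: the left hand side is a closed subscheme of $X_\alpha$ with the same underlying topological space, and it is reduced by property (a), so it must coincide with $(X_\alpha)_{\red}$. Hence $X'_{\red} \to S'$ parameterizes the collection $\{(X_\alpha)_{\red}\}_\alpha$, proving boundedness.

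For the second assertion, the irreducible components of $X_\alpha$ coincide with those of $(X_\alpha)_{\red}$. Property (b) ensures that each non-empty fiber $(X'_i)_{s'_\alpha}$ is integral, hence an irreducible closed subscheme of $(X_\alpha)_{\red}$, and these fibers together cover $(X_\alpha)_{\red}$ because $\bigcup_{i=1}^m X'_i = X'_{\red}$ as topological spaces. Thus every irreducible component of $X_\alpha$ equals $(X'_i)_{s'_\alpha}$ for some $i$, which exhibits the collection $\{X_{\alpha, i_\alpha}\}_{\alpha, i_\alpha}$ as a subcollection of the one parameterized by $X'_1 \sqcup \cdots \sqcup X'_m \to S'$; subcollections of bounded collections are automatically bounded. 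The only mild obstacle is the scheme-theoretic identification $(X'_{\red})_{s'_\alpha} = (X_\alpha)_{\red}$ in the previous paragraph, but once condition (a) has been arranged this is essentially tautological, since a reduced closed subscheme of a given scheme is determined by its support.
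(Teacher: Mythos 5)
Your proof follows the same route as the paper: pull back along the surjection $S'\to S$ produced by Lemma \ref{L:reduced irreducible} and read off the two assertions from conditions (a) and (b). The extra details you spell out (lifting $s_{\al}$ to $S'(k_{\al})$ via the Nullstellensatz since $S'\to S$ is surjective of finite type; the scheme-theoretic identification $(X'_{\red})_{s'_{\al}}=(X_{\al})_{\red}$; and the observation that each irreducible component of $X_{\al}$ is some nonempty fiber $(X'_i)_{s'_{\al}}$ because these are irreducible, cover $(X_{\al})_{\red}$, and are therefore the maximal irreducible closed subsets) are all correct and are exactly the steps the paper leaves implicit.

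One small slip at the end: the family you write, $X'_1\sqcup\cdots\sqcup X'_m\to S'$, has as its geometric fiber over $s'_{\al}$ the disjoint union $(X'_1)_{s'_{\al}}\sqcup\cdots\sqcup(X'_m)_{s'_{\al}}$, not an individual component. To parameterize single components you should take the base to be $m$ copies of $S'$, i.e.\ $\bigsqcup_{i=1}^m X'_i\to\bigsqcup_{i=1}^m S'$ with $X'_i$ sitting over the $i$-th copy; then the fiber over a point in the $i$-th copy is $(X'_i)_{s'}$, and your "subcollection of a bounded collection is bounded" remark closes the argument. This is a notational fix rather than a gap in the reasoning.
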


\begin{proof}
Let $X\to S$ parameterizes collection $\{X_{\al}\}_{\al}$, and let $X'\to S'$ be as in  Lemma \ref{L:reduced irreducible}.
Then $X'_{\red}\to S'$ parameterizes collection $\{(X_{\al})_{\red}\}_{\al}$, while $\sqcup_{i=1}^m X'_i\to \sqcup_{i=1}^m S$ parameterizes
$\{X_{\alpha,i_{\al}}\}_{\al,i_{\al}}$.
\end{proof}

\begin{Lem}\label{L:simple bounded}
(a) Let $\{X_{\alpha}\subseteq \B{P}^n_{k_{\al}}\}_{\alpha \in \Lambda}$ and $\{Y_{\alpha}\subseteq \B{P}^n_{k_{\al}}\}_{\alpha \in \Lambda}$
is a bounded collection (of locally closed subschemes). Then $\{X_{\al},Y_{\al}\subseteq\B{P}^n_{k_{\al}}\}_{\al}$ and $\{X_{\al}\cap Y_{\al}\subseteq\B{P}^n_{k_{\al}}\}_{\al}$ are bounded collections.

(b) In the situation of (a), let  $\Lambda^{'} \subseteq \Lambda$ consists of all $\alpha$ such that $Y_{\alpha}\subseteq X_{\al}$.
Then the collection $\{Y_{\alpha} \subseteq X_{\alpha}\}_{\alpha \in \Lambda'}$ is bounded.

(c) Let $\{f_{\al}\}_{\al}$ be a bounded collections of morphisms such that each $f_{\al}$ is an open (resp. closed, resp. locally closed) embeddings. Then  $\{f_{\al}\}_{\al}$ be a bounded collection of open (resp. closed, resp. locally closed) embeddings (see \rn{bounded}(c)).

(d) Let $\{Y_{\al}\subseteq X_{\al}\}_{\al}$ is a bounded collection of closed (resp. open) subschemes. Then the collection
$\{X_{\al}\sm Y_{\al}\}_{\al}$ (resp. $\{(X_{\al}\sm Y_{\al})_{\red}\}_{\al}$) is bounded.

(e) Let $\{Z'_{\al},Z''_{\al}\subseteq X_{\al}\}_{\al}$ be a bounded collection such that $Z'_{\al}$ and $Z''_{\al}$ are closed subschemes of $X_{\al}$, and let $Z_{\alpha}\subseteq X_{\al}$ be the closed subscheme such that $\C{I}_{Z_{\al},X_{\al}}:=\C{I}_{Z'_{\al},X_{\alpha}}\C{I}_{Z''_{\alpha},X_{\al}}$.
Then the collection $\{Z_{\al}\subseteq X_{\al}\}_{\al}$ is bounded.

(f) Let $\{f_{\al}:X_{\al}\to Y_{\al}, Z_{\al}\subseteq Y_{\al}\}_{\al}$  be a bounded collection. Then the induced
collection $\{f_{\al}^{-1}(Z_{\al})\subseteq X_{\al}\}_{\al}$ is bounded.

(g) Let $\{f_{\al}:X_{\al}\to Y_{\al}\}_{\al}$ be a bounded collection. Then the collection $\{\ov{f_{\al}(X_{\al})}\subseteq Y_{\al}\}_{\al}$ of schematic closures of images is bounded.
\end{Lem}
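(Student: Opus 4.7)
The plan is to prove the seven parts in order using two unifying principles. First, two bounded collections indexed by the same $\La$ can always be parameterized jointly over a common base of finite type over $\B{Z}$: if $X\to S$ and $Y\to T$ are parameterizing families, then pulling back along the two projections from $S\times_{\Spec\B{Z}} T$ yields parameterizing families over this common base. Second, many conditions on geometric fibers---such as ``$Y_s\subseteq X_s$'', ``$f_s$ is an open (resp.\ closed, locally closed) embedding'', or ``$f_s$ is flat''---are constructible on the parameter scheme by Chevalley's theorem and its refinements, and any constructible subset of a Noetherian scheme is a finite disjoint union of locally closed strata, each still of finite type over $\B{Z}$. After replacing $S$ by such a stratification I may therefore assume any given constructible condition holds universally over $S$.

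With these tools in place, (a) is immediate: after going to a common base, the schematic intersection of the two pulled-back families in $\B{P}^n$ parameterizes the collection of intersections, and forgetting one family leaves a parameterization of the other on the same base. Part (b) follows because the locus $\{s : Y_s \subseteq X_s\}$ is constructible---it is the complement of the image of $Y\sm X$ under the structure map to the common base---so after stratification the inclusion holds universally. Part (c) is the same argument applied to the constructible loci where $f$ is an open, closed, or locally closed embedding. Part (d) is direct: the complement of a closed (resp.\ open) subscheme of $X$ is open (resp.\ locally closed) in $X$, and the reduced structure is supplied by \rco{reduced irreducible}. Part (f) is immediate since scheme-theoretic preimages commute with arbitrary base change, so $f^{-1}(Z)\subseteq X$ over the common base parameterizes $\{f_\al^{-1}(Z_\al)\}$.

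For (e), I would stratify $S$ with the help of \rl{reduced irreducible} and use generic flatness to ensure that the formation of the product ideal $\C{I}_{Z',X}\C{I}_{Z'',X}\subseteq\C{O}_X$ commutes with restriction to geometric fibers; the closed subscheme it cuts out then parameterizes the collection $\{Z_\al\subseteq X_\al\}$.

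The main obstacle is (g), since the scheme-theoretic image $\ov{f(X)}$ of a general morphism need not commute with base change. My plan is to apply generic flatness to $f:X\to Y$ viewed as an $S$-morphism: by Noetherian induction on $S$ combined with an appropriate stratification I may reduce to the case that $f$ is flat and quasi-compact, whence its scheme-theoretic image is a closed subscheme of $Y$ stable under arbitrary base change. That closed subscheme then parameterizes $\{\ov{f_\al(X_\al)}\}$ over the chosen stratum; assembling the strata finishes the proof.
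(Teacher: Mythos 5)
Your parts (a), (c), (d), and (f) follow the paper's route exactly: go to a common base $S_X \times_{\B{Z}} S_Y$, use constructibility of the relevant fiberwise conditions (the paper cites EGA IV 9.6.1(viii)--(x) for (c)), and note that the formation of $X\sm Y$ and of $f^{-1}(Z)$ commutes with base change. Part (e) is also fine, but you are over-engineering it: products of ideal sheaves commute with \emph{arbitrary} base change on the nose (if $I', I''\subseteq A$ and $A\to B$, then $(I'I'')B=(I'B)(I''B)$), so no stratification or generic flatness is needed; the paper simply writes down the product ideal on the parameterizing family.

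There are two genuine gaps. In (b), you propose to detect the locus $\{s: Y_s\subseteq X_s\}$ as the complement of the image in $S$ of $Y\sm X$. This only detects set-theoretic containment $|Y_s|\subseteq |X_s|$, which is strictly weaker than the scheme-theoretic inclusion $Y_s\subseteq X_s$ that the lemma asserts and that the later applications require (the collections are not assumed reduced). The paper instead looks at the locus where the closed immersion $(Y\cap X)_s\hra Y_s$ is an isomorphism and invokes EGA IV, Prop.\ 9.6.1(xi), to see that this locus is constructible; you need some such scheme-theoretic criterion.

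In (g), the assertion that ``$f$ flat and quasi-compact $\Rightarrow$ the scheme-theoretic image is stable under arbitrary base change'' is false. Take $S=Y=\A^1_{\B{Z}}$, $X=\A^1\sm\{0\}$ with $f$ the open immersion (hence flat and quasi-compact, and $X\to S$ flat). The scheme-theoretic image is all of $Y$, but the fiber over $0$ is empty, so the image of $f_0$ is empty while $\ov{f(X)}_0$ is a point. Stability of the scheme-theoretic image under base change needs flatness of $\ov{f(X)}$ over $S$ \emph{together with} good behavior of $f_*\C{O}_X$ under base change (e.g.\ $f$ affine, or proper plus cohomology and base change), neither of which is automatic here. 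The paper avoids all this by Noetherian induction plus EGA IV, Prop.\ 9.6.1(ii), which directly asserts the generic compatibility of scheme-theoretic image with passage to fibers; that is the clean way to finish (g).
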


\begin{proof}
(a) Let $X_0\subseteq\B{P}^n_{S_X}$ and $Y_0\subseteq\B{P}^n_{S_Y}$ parameterize collections $\{X_{\alpha}\subseteq \B{P}^n_{k_{\al}}\}_{\alpha \in \Lambda}$ and $\{Y_{\alpha}\subseteq \B{P}^n_{k_{\al}}\}_{\alpha \in \Lambda}$, respectively, and set $S:=S_X\times_{\B{Z}} S_Y$.
Then $X:=X_0\times_{S_X} S, Y:=Y_0\times_{S_Y} S\subseteq  \B{P}^n_{S}$ and $X\cap Y\subseteq  \B{P}^n_{S}$ parameterize $\{X_{\al},Y_{\al}\subseteq\B{P}^n_{k_{\al}}\}_{\al}$ and $\{X_{\al}\cap Y_{\al}\subseteq\B{P}^n_{k_{\al}}\}_{\al}$, respectively.

(b) We continue the notation of the proof of (a). Consider the subset $S_0\subseteq S$ consisting of $s\in S$ such that the embedding $(Y\cap X)_s\hra Y_s$ is an isomorphism. By \cite[Proposition 9.6.1(xi)]{EGAIV(III)}, $S_0$ is constructible. Therefore there exists a morphism $S'\to S$ of schemes of finite type over $\B{Z}$, whose image is $S_0$.
Then $Y\times_S S'\subseteq X\times_S S'\subseteq  \B{P}^n_{S'}$ parameterizes $\{Y_{\alpha} \subseteq X_{\alpha}\}_{\alpha \in \Lambda'}$.

(c) Arguing as in (b), it suffices to show that if $f:X\to Y$ is a morphism of schemes of finite type over $S$, then
the set of $s\in S$ such that $f_s$ is an open (resp. closed, resp. locally closed) embedding is constructible. But this follows from \cite[Proposition 9.6.1(viii),(ix),(x)]{EGAIV(III)}.

(d) Let $Y\subseteq X$ be a closed (resp. an open) subscheme, representing $\{Y_{\al}\subseteq X_{\al}\}_{\al}$ over $S$ (use (c)).
Then $X\sm Y$ (resp. $(X\sm Y)_{\red}$) parameterizes collection $\{X_{\al}\sm Y_{\al}\}_{\al}$ (resp. $\{(X\sm Y)_{\red,\al}\}_{\al}$).
Since  $((X\sm Y)_{\red,\al})_{\red}=(X_{\al}\sm Y_{\al})_{\red}$, the assertion follows
from \rco{reduced irreducible}.

(e) Let  $Z',Z''\subseteq X$ be closed subschemes parameterizing $\{Z'_{\al},Z''_{\al}\subseteq X_{\al}\}_{\al}$ (use (c)).
Then the closed subscheme  $Z\subseteq X$ such that $\C{I}_{Z,X}:=\C{I}_{Z',X}\C{I}_{Z'',X}$ parameterizes
$\{Z_{\al}\subseteq X_{\al}\}_{\al}$.

(f) If $(f:X\to Y, Z\subseteq X)$ parameterizes $\{f_{\al}:X_{\al}\to Y_{\al}, Z_{\al}\subseteq Y_{\al}\}_{\al}$, then
$f^{-1}(Z)\subseteq X'$ parameterizes $\{f_{\al}^{-1}(Z_{\al})\subseteq X'_{\al}\}_{\al}$.

(g) Let $f:X\to Y$ parameterizes $\{f_{\al}:X_{\al}\to Y_{\al}\}_{\al}$, and let $\ov{f(X)}\subseteq Y$ be the schematic closure of the image.
By Noetherian induction, it remains to show that there exists an open non-empty subscheme $U\subseteq S$ such that $\ov{f_s(X_s)}=\ov{f(X)}_s$ for every $s\in U$. Replacing $S$ by an open subscheme, we can assume that $S$ is integral and the generic fiber $X_{\eta}$ of $f$ is dense in $X$. Now the assertion follows from  \cite[Proposition 9.6.1(ii)]{EGAIV(III)}.
\end{proof}

\begin{Lem}\label{L:smooth}
(a) Let $\{X_{\al}\}_{\al}$ be a bounded collection. Then the collection $\{U_{\al}\subseteq X_{\al}\}_{\al}$, where $U_{\al}$ is the largest open subscheme of $X_{\al}$, smooth over $k_{\al}$, is bounded.

(b) Let $\{f_{\al}:X_{\al}\to Y_{\al}\}_{\al}$ be a bounded collection such that each $f_{\al}$ is a quasi-finite morphism between
integral schemes over $k_{\al}$. Then there exists a bounded collection $\{U_{\al}\subseteq X_{\al}\}_{\al}$, where $U_{\al}$ is an open dense  subscheme of $X_{\al}$ such that $f_{\al}|_{U_{\al}}$ is uh-\'etale (see \re{uh-etale}).
\end{Lem}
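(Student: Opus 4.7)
Both parts are established by Noetherian induction on the parametrizing base $S$, combined with generic flatness and standard spreading-out results. The uniform statement is reduced at each step to the non-uniform statement on the generic fiber (elementary for (a); Lemma~\ref{L:uh-etale}(a) for (b)), which is then extended to a dense open of $S$ by constructibility of the relevant fiberwise conditions; Noetherian induction on the closed complement closes the argument.

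For part~(a), let $X \to S$ parametrize $\{X_\alpha\}_\alpha$. It suffices to produce, after replacing $S$ by a dense open $S^0 \subseteq S$, an open subscheme $U \subseteq X \times_S S^0$ whose geometric fiber over each $s \in S^0$ is the absolute smooth locus of $X_s/k(s)$. By Lemma~\ref{L:reduced irreducible} we may assume $S$ is integral, and by generic flatness we shrink $S$ so that $X \to S$ is flat. The relative smooth locus $U \subseteq X$ is then open; for every geometric $s \to S$ mapping to $s_0 \in S$, smoothness of $X_{s_0}$ at $x \in X_{s_0}$ is equivalent (given flatness) to smoothness of $X \to S$ at $x$ and also to smoothness of $X_s$ at all preimages of $x$, since these conditions are preserved and reflected along the field extension $k(s_0) \subseteq k(s)$ to an algebraically closed field. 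Hence $U$ parametrizes the smooth loci over $S^0$; Noetherian induction and disjoint union yield the required family.

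For part~(b), let $f: X \to Y$ over $S$ parametrize $\{f_\alpha\}_\alpha$. After Lemma~\ref{L:reduced irreducible} and generic flatness, we may assume $S$ is integral, $X \to S$ and $Y \to S$ are flat with integral geometric fibers, and $f$ is quasi-finite. Applying Lemma~\ref{L:uh-etale}(a) to the generic fiber $f_\eta: X_\eta \to Y_\eta$ produces a dense open $U_\eta \subseteq X_\eta$ and a factorization $f_\eta|_{U_\eta} = g_{s,\eta} \circ g_{i,\eta}$ with $g_{i,\eta}: U_\eta \to Z_\eta$ a flat universal homeomorphism and $g_{s,\eta}: Z_\eta \to Y_\eta$ \'etale. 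By standard spreading out there exist a dense open $S^0 \subseteq S$, an open $U \subseteq X \times_S S^0$ restricting to $U_\eta$, a scheme $Z$ of finite type over $S^0$, and morphisms $g_i: U \to Z$ and $g_s: Z \to Y \times_S S^0$ extending the generic-fiber data. It then remains to shrink $S^0$ further so that the factorization specializes correctly to every geometric fiber. Each of the properties ``flat'', ``finite'', ``surjective'', ``universally injective'', and ``\'etale'' holds over an open subset of $S^0$ for a morphism of finite type, by \cite[Prop.~9.6.1]{EGAIV(III)} and variants; likewise the locus on which $U_s \subseteq X_s$ is dense is open. Intersecting these open loci and shrinking $S^0$ accordingly ensures that for each geometric $s \in S^0$, $g_{i,s}$ is a flat universal homeomorphism (being finite, flat, surjective, and universally injective) and $g_{s,s}$ is \'etale, so $f_s|_{U_s}$ is uh-\'etale with $U_s$ dense open in $X_s$. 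Noetherian induction on $S \setminus S^0$ closes the argument.

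The main obstacle in both parts is propagating the generic-fiber conclusion to all geometric points over a dense open of $S$. For (a) this is clean once generic flatness is invoked, since then fiberwise smoothness coincides with the open condition of relative smoothness. For (b) the harder step is verifying that each ingredient of the uh-\'etale factorization spreads; this requires applying several openness and constructibility results from EGA~IV in concert, and in particular the spreading of ``flat universal homeomorphism'' via its decomposition into finite $+$ flat $+$ surjective $+$ universally injective.
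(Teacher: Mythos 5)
Your proof is correct, and part (a) is essentially the paper's argument (generic flatness, then identification of the relative smooth locus with the fiberwise smooth locus, then Noetherian induction). For part (b), however, you take a somewhat longer route than the paper. You apply Lemma~\ref{L:uh-etale}(a) to the \emph{generic fiber} $f_\eta:X_\eta\to Y_\eta$ over $S$, produce the factorization $U_\eta\to Z_\eta\to Y_\eta$ there, and then spread all of this out over a dense open $S^0\subseteq S$, which forces you to verify that each ingredient (flatness, finiteness, surjectivity, universal injectivity, \'etaleness, density of $U_s$) holds fiberwise on a dense open, using a battery of openness/constructibility results from EGA~IV. The paper instead reduces to $X,Y$ integral over $S$ and applies Lemma~\ref{L:uh-etale}(a) \emph{directly to the morphism $f:X\to Y$ of total spaces} (these are integral Noetherian schemes of finite type over $\B{Z}$), obtaining a dense open $U\subseteq X$ with $f|_U$ uh-\'etale; after shrinking $S$ so that $U_t\subseteq X_t$ is dense for all $t$, one is done, because ``flat universal homeomorphism'' and ``\'etale'' are both stable under arbitrary base change, so the uh-\'etale decomposition of $f|_U$ restricts to a uh-\'etale decomposition on every geometric fiber. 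This dispenses with the explicit spreading-out of the factorization and most of the EGA~IV bookkeeping; what your version buys in exchange is more explicit control over the factorization scheme $Z$, which could be useful if one later wanted $Z$ itself to belong to a bounded collection, but that is not needed here.
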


\begin{proof}
(a) Let $f:X\to S$ be a morphism parameterizing $\{X_{\al}\}_{\al}$ such that $S$ is reduced. By \cite[Tag 0529]{Stacks}, there exists an open non-empty subset $V\subseteq S$ such that $f|_V:f^{-1}(V)\to V$ is flat. Let $U\subseteq f^{-1}(V)$ be the largest open subscheme,
where $f$ is smooth. Then it follows from \cite[Tag 01V9]{Stacks} that $U_s\subseteq X_s$ is the largest subscheme of $X_s$, which is smooth over $s$, for every geometric point $s$ of $V$. Now the assertion follows by the Noetherian induction on $S$.


(b) Let $f:X\to Y$ be a family parameterizing collection $\{f_{\al}\}_{\al}$ over $S$. Using \cite[Proposition 9.6.1(vii)]{EGAIV(III)} and \cite[Th\'eor\`eme 9.7.7]{EGAIV(III)}, the set of points $t\in S$ such that $f_t:X_t\to Y_t$ is a quasi-finite morphism between geometrically integral schemes is a constructible subset. Thus, arguing as in \rl{simple bounded}(b), we can assume that these properties are satisfied for all points $t\in S$. Using Noetherian induction on $S$, and replacing $S$ by an open subset we can assume that $X$ and $Y$ are integral.
Then, by \rl{uh-etale}(a), there exists an open dense subscheme $U\subseteq X$ such that $f|_U$ is uh-\'etale. Replacing $S$ by an open subscheme,
we can assume that $U_t\subseteq X_t$ is dense for every $t\in S$. Then $U\subseteq X$ parameterizes the collection $\{U_{\al}\subseteq X_{\al}\}_{\al}$ we were looking for.
\end{proof}

The following standard fact seems to be missing the literature.

\begin{Lem} \label{L:blowup}
Let $X$ be a scheme over $S$, and $Z\subseteq X$ a closed subscheme of X such that
the normal cone $N_Z(X)$ is flat over $S$. Then $\Bl_Z(X)\to S$ commutes with
all pullbacks, that is, for all morphisms $f:S'\to S$ the natural morphism
\[
\Bl_{Z\times_S S'}(X\times_S S')\isom \Bl_Z(X)\times_S S'
\]
is an isomorphism.
\end{Lem}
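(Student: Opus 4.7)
The plan is to reduce to the affine case and verify that the Rees algebra $\bigoplus_{n \geq 0} I^n$ commutes with base change. Locally I may write $X = \Spec A$, $S = \Spec R$, $S' = \Spec R'$, and $Z = V(I)$ for an ideal $I \subseteq A$. Recall that $\Bl_Z(X) = \Proj_A(\bigoplus_{n \geq 0} I^n)$, and similarly $\Bl_{Z \times_S S'}(X \times_S S') = \Proj_{A \otimes_R R'}(\bigoplus_{n \geq 0} (I')^n)$, where $I' := I \cdot (A \otimes_R R')$. Since $\Proj$ sends surjections of graded algebras to closed immersions, to produce the asserted isomorphism it suffices to show that the canonical surjection of graded $A \otimes_R R'$-algebras
\[
\bigoplus_{n \geq 0} I^n \otimes_R R' \twoheadrightarrow \bigoplus_{n \geq 0} (I')^n
\]
is an isomorphism in every degree.

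The main observation is that flatness of $N_Z(X) \to S$ is equivalent to flatness of each graded piece $I^n/I^{n+1}$ over $R$ (in particular $\mathcal{O}_Z = A/I$ is flat over $R$). From this I would prove by induction on $n$ that $A/I^n$ is flat over $R$ for every $n \geq 1$: the base case $A/I$ is the degree-zero piece of the normal cone, and for the inductive step the short exact sequence
\[
0 \to I^n/I^{n+1} \to A/I^{n+1} \to A/I^n \to 0
\]
exhibits $A/I^{n+1}$ as an extension of two flat $R$-modules, hence flat (flatness is preserved under extensions, as $\Tor_1^R$ vanishes at both ends after tensoring with an arbitrary $R'$-module).

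Once each $A/I^n$ is flat over $R$, tensoring the short exact sequence $0 \to I^n \to A \to A/I^n \to 0$ with $R'$ over $R$ yields an injection $I^n \otimes_R R' \hookrightarrow A \otimes_R R'$ (since $\Tor_1^R(A/I^n, R') = 0$). The image of this injection is the $A \otimes_R R'$-submodule generated by $I^n$, which equals $(I')^n$ by the elementary identity $(I \cdot B)^n = I^n \cdot B$ for any $A$-algebra $B$. Hence each map $I^n \otimes_R R' \to (I')^n$ is an isomorphism, so the Rees algebras match up after base change, and the desired isomorphism of blow-ups follows by taking $\Proj$.

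The principal point requiring care is the translation of flatness of the normal cone as a scheme over $S$ into flatness of each graded piece $I^n/I^{n+1}$ as an $R$-module; once this is set up, everything else is routine homological bookkeeping via the standard filtration $A \supseteq I \supseteq I^2 \supseteq \cdots$.
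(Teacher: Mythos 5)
Your proof is correct and follows essentially the same route as the paper: translate flatness of the normal cone into flatness of the associated graded pieces $I^n/I^{n+1}$, deduce flatness of each $A/I^n$ by induction on the filtration, and conclude that the Rees algebra $\bigoplus_n I^n$ commutes with base change, hence so does $\Proj$. The paper states this more tersely but the underlying argument is identical; your version merely spells out the homological details (the extension argument for flatness and the identification of the image of $I^n \otimes_R R'$ with $(I')^n$).
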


\begin{proof}
Since the normal cone $N_Z(X)=\C{Spec}(\oplus_{n\geq 0}((\C{I}_{Z,X})^n/(\C{I}_{Z,X})^{n+1}))$
is flat over $S$, we conclude that every $(\C{I}_{Z,X})^n/(\C{I}_{Z,X})^{n+1}$ is flat over $S$.
By induction we therefore conclude that every quotient $\C{O}_X/(\C{I}_{Z,X})^{n}$ is flat over $S$,
which implies that $f^*(\C{I}_{Z,X})^{n}\simeq (\C{I}_{Z\times_S S',X\times_S S'})^n$ for every $n$.
Since $\Bl_Z(X)=\C{Proj}(\oplus_{n\geq 0}(\C{I}_{Z,X})^n)$ and similarly for $\Bl_{Z\times_S S',X\times_S S'}$, the assertion follows.
\end{proof}


\begin{Cor}\label{C:blowup}
Let $\{Z_{\alpha} \subseteq X_{\alpha}\subseteq\B{P}^n_{k_{\al}}\}_{\al}$ be a bounded collection,
where each $Z_{\al}\subseteq X_{\al}$ is a closed subscheme. Then there exists $m\in\B{N}$ and a closed embedding $i_{\al}:\Bl_{Z_{\al}}(X_{\al})\hra \B{P}^m_{X_{\al}}$ over $X_{\al}$ for every $\al$ such that the collection of closed embeddings $\{i_{\al}\}_{\al\in\La}$ is bounded.
\end{Cor}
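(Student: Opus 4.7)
Parameterize the collection by a family $Z\subseteq X\subseteq\B{P}^n_S$ with $S$ of finite type over $\B{Z}$. I would argue by Noetherian induction on $\dim S$: using \rl{simple bounded}(d) together with the fact that an embedding $\wt{X}\hra\B{P}^{m'}_X$ can always be post-composed with a linear $\B{P}^{m'}_X\hra\B{P}^m_X$ for $m\geq m'$, a disjoint union of bounded collections with varying $m'$ becomes a bounded collection with $m=\max m'$. Hence it suffices to construct, on some open dense $U\subseteq S$, a closed embedding $i_U\colon\Bl_{Z_U}(X_U)\hra\B{P}^m_{X_U}$ over $X_U$ whose formation commutes with base change to each geometric fiber; the Noetherian induction then handles $S\smallsetminus U$.

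The next step is to arrange the hypothesis of \rl{blowup}. By generic flatness, after shrinking $S$, both $X\to S$ and $Z\to S$ are flat. To further arrange that the normal cone $N_Z(X)$ is flat over $S$, one uses that the Rees algebra $\bigoplus_n \C{I}^n$ (where $\C{I}:=\C{I}_{Z,X}$) is a finitely generated $\C{O}_X$-algebra in degree one, so that although naively one would need flatness of infinitely many graded pieces $\C{I}^n/\C{I}^{n+1}$, the finite-presentation structure (combined with generic flatness, or a Raynaud--Gruson flattening argument) reduces this to finitely many flatness conditions achievable on a dense open subscheme. Once the normal cone is flat, \rl{blowup} yields that $\Bl_Z(X)\to X$ commutes with arbitrary base change, so its geometric fiber at $s_\al$ recovers $\Bl_{Z_\al}(X_\al)\to X_\al$.

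Finally, for the uniform projective embedding, set $\wt{\C{I}}:=\C{I}_{Z,\B{P}^n_S}$ and $\pi\colon\B{P}^n_S\to S$. By uniform Serre vanishing on the Noetherian base, there exists $d$ such that $\wt{\C{I}}(d)$ is $\pi$-globally generated and $R^i\pi_*\wt{\C{I}}(d)=0$ for $i\geq 1$. After shrinking $S$ to an affine open where $\pi_*\wt{\C{I}}(d)$ is free of some rank $r$, a basis $s_1,\ldots,s_r$ of global sections yields a surjection $\C{O}_{\B{P}^n_S}(-d)^{\oplus r}\twoheadrightarrow\wt{\C{I}}$ whose formation commutes with base change by cohomology-and-base-change. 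This surjection produces a closed embedding $\Bl_Z(\B{P}^n_S)\hra\B{P}^{r-1}_{\B{P}^n_S}$ (as a closure of a graph), and taking the strict transform of $X$ gives $\Bl_Z(X)\hra\B{P}^{r-1}_X$; base-change compatibility follows from \rl{blowup} combined with flatness. The hard part will be the normal cone flattening: lifting pointwise-in-$n$ generic flatness to simultaneous flatness of the entire normal cone on an open subscheme of $S$, which is where the finite generation of the Rees algebra in degree one must be exploited.
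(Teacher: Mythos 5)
Your outline matches the paper's proof: parameterize the collection over a reduced Noetherian $S$, pass by Noetherian induction to a dense open where the hypothesis of \rl{blowup} holds, construct an embedding $\Bl_Z(X)\hra\B{P}^m_X$, and then use \rl{blowup} for base-change compatibility. However, you identify the ``hard part'' as the flattening of the normal cone, worrying that one might need infinitely many flatness conditions for the graded pieces $\C{I}^n/\C{I}^{n+1}$, and invoking Raynaud--Gruson. This concern is misplaced. The normal cone $N_Z(X)=\C{Spec}_X\bigl(\bigoplus_{n\geq 0}\C{I}^n/\C{I}^{n+1}\bigr)$ is an affine scheme of finite type over $X$, hence of finite type over $S$, because the associated graded algebra is a quotient of the Rees algebra, which is a finitely generated $\C{O}_X$-algebra. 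Generic flatness (\cite[Tag 0529]{Stacks}) therefore applies directly to $N_Z(X)\to S$ as a morphism of schemes, giving flatness over a dense open subset of $S$ with no further argument; this is exactly what the paper does, and no decomposition into degrees is needed.

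Similarly, your Serre-vanishing construction of the projective embedding is heavier than required. Once $S$ is shrunk to an affine open, $X\subseteq\B{P}^n_S$ is quasi-projective over an affine base and hence carries an ample line bundle; since $\Bl_Z(X)=\C{Proj}_X\bigl(\bigoplus_n\C{I}^n\bigr)$ is projective over $X$ (the Rees algebra is generated in degree one by the coherent ideal $\C{I}$), a twist of $\C{I}$ is globally generated and one gets a closed embedding into some $\B{P}^m_X$ over $X$ immediately, without any explicit cohomology-and-base-change analysis. Base-change compatibility of the resulting embedding in fibers then follows from \rl{blowup} applied to the now-flat normal cone. So the proposal is correct in its overall route, but both steps you flag as difficult are in fact the easy ones.
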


\begin{proof}
Let $Z \subseteq X \subseteq \B{P}^n_S$ be a family parameterizing $\{Z_{\alpha} \subseteq X_{\alpha}\subseteq\B{P}^n_{k_{\al}}\}_{\al}$ such that $Z \subseteq X$ is a closed subscheme and $S$ is reduced. Then $\Bl_Z(X)$ is of finite type over $S$, thus there exists an open non-empty subset $V\subseteq S$ such that $N_Z(X)$ is flat over $V$ (possibly empty). By Noetherian induction on $S$, we can restrict all schemes to $V$, thus assuming that $S$ is affine and $N_Z(X)$ is flat over $S$. But $S$ is affine, thus $X$ has an ample line bundle, hence there exists a closed embedding $i:\on{Bl}_Z(X)\hra\B{P}^m_X$ over $X$. Then, by \rl{blowup}, this closed embedding parameterizes a bounded collection of closed embeddings
$\{i_{\al}:\Bl_{Z_{\al}}(X_{\al})\hra \B{P}^m_{X_{\al}}\}_{\al}$ over $\{X_{\al}\}_{\al}$.
\end{proof}

\begin{Emp} \label{E:blowup}
{\bf Remark.} The argument of \rco{blowup} also shows that if $\{Z_{\alpha} \subseteq X_{\alpha}\}_{\al}$ is a bounded collection of closed subschemes, then the collection of projections $\{\Bl_{Z_{\al}}(X_{\al})\to X_{\al}\}_{\al}$ is bounded.
\end{Emp}

\begin{Cor}\label{boundedness_normal_cone}
Let $\{Z_{\alpha} \subseteq X_{\alpha}\}_{\al}$  be a bounded family of closed subschemes. Then the collection of projections
$\{N_{Z_{\alpha}}(X_{\alpha})\to X_{\al}\}_{\al}$ is bounded.
\end{Cor}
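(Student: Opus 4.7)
The plan is to mimic the argument of \rl{blowup} and its corollary \rco{blowup}, using generic flatness combined with Noetherian induction. Choose a family $Z\subseteq X\to S$ of finite type over $\B{Z}$ parameterizing the collection $\{Z_\al\subseteq X_\al\}_\al$ (which exists by \rl{simple bounded}(c)), and, by replacing $S$ with $S_{\on{red}}$ if needed, assume $S$ is reduced.

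Form the normal cone $N_Z(X)=\C{Spec}(\oplus_{n\ge 0}\C{I}_{Z,X}^n/\C{I}_{Z,X}^{n+1})$, which is a scheme of finite type over $X$, hence of finite type over $S$. By generic flatness (\cite[Tag 0529]{Stacks}), there exists a non-empty open subscheme $V\subseteq S$ such that the restriction $N_Z(X)|_V\to V$ is flat. Exactly as in the proof of \rl{blowup}, flatness of $N_Z(X)|_V=\C{Spec}(\oplus_{n\ge 0}\C{I}_{Z,X}^n/\C{I}_{Z,X}^{n+1})|_V$ over $V$ implies that each graded piece $\C{I}_{Z,X}^n/\C{I}_{Z,X}^{n+1}$ is flat over $V$, and by induction on $n$ that each $\C{O}_X/\C{I}_{Z,X}^n$ is flat over $V$. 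Consequently, for any morphism $f\colon S'\to V$ the canonical map $f^*(\C{I}_{Z,X}^n/\C{I}_{Z,X}^{n+1})\to \C{I}_{Z\times_S S',X\times_S S'}^n/\C{I}_{Z\times_S S',X\times_S S'}^{n+1}$ is an isomorphism, so the formation of $N_Z(X)$ commutes with arbitrary base change over $V$. In particular, $N_Z(X)|_V\to X|_V$ parameterizes the sub-collection $\{N_{Z_\al}(X_\al)\to X_\al\}$ indexed by those $\al$ whose geometric point $s_\al$ lies in $V$.

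Now apply Noetherian induction: since $\dim(S\smallsetminus V)<\dim S$, there exists a family $N'\to X'\to S'$ (with $S'\to S\smallsetminus V$ surjective and of finite type) parameterizing the remaining sub-collection. Taking disjoint unions $N_Z(X)|_V\sqcup N'\to X|_V\sqcup X'\to V\sqcup S'$ yields a single family over a scheme of finite type over $\B{Z}$ that parameterizes the collection $\{N_{Z_\al}(X_\al)\to X_\al\}_\al$, as required.

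The only non-routine point is the commutation of normal cone formation with base change, and this is handled exactly by the observation (already used in \rl{blowup}) that flatness of the graded algebra forces flatness of every graded piece and every truncated quotient $\C{O}_X/\C{I}_{Z,X}^n$. Everything else is formal manipulation with constructible subsets of $S$ and Noetherian induction, parallel to the proofs in \rco{blowup} and \rr{blowup}.
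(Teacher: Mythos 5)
Your proof is correct, but you take a different route from the paper. The paper's proof of \rco{boundedness_normal_cone} uses the deformation to the normal cone: it identifies $N_{Z_\al}(X_\al)$ with $\Bl_{Z_\al\times\{0\}}(X_\al\times\B{A}^1)_0\sm\Bl_{Z_\al}(X_\al)$ and then simply cites the already-established boundedness of blow-ups (Remark~\re{blowup}) together with \rl{simple bounded}(d),(f) to handle the fiber over $0$ and the open complement. You instead apply generic flatness directly to $N_Z(X)\to S$ and re-run the base-change argument from \rl{blowup}'s proof (flatness of the graded algebra gives flatness of each graded piece, hence of each $\C{O}_X/\C{I}^n$, hence formation of $\C{I}^n$ commutes with base change over the open $V\subseteq S$), closing with Noetherian induction. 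Both arguments work; the paper's is shorter because it piggy-backs on the boundedness of blow-ups already proved, while yours is more self-contained and directly parallels the reasoning already inside \rl{blowup}. One cosmetic point: in your argument what actually commutes with base change over $V$ is the graded algebra $\oplus_n\C{I}^n/\C{I}^{n+1}$ (equivalently its $\C{Spec}$), and this is exactly what one needs; it would be slightly cleaner to state the conclusion in that form rather than via the individual isomorphisms $f^*(\C{I}^n/\C{I}^{n+1})\isom\C{I}'^n/\C{I}'^{n+1}$, but the content is the same.
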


\begin{proof}
Let $\Bl_{Z_{\al}\times\{0\}}(X_{\al}\times\B{A}^1)_0$ be the fiber of $\Bl_{Z_{\al}\times\{0\}}(X_{\al}\times\B{A}^1)\to\B{A}^1$ over $0\in\B{A}^1$. Then $N_{Z_{\al}}(X_{\al})$ can be described as $N_{Z_{\al}}(X_{\al})=\Bl_{Z_{\al}\times\{0\}}(X_{\al}\times\B{A}^1)_0\sm\Bl_{Z_{\al}}(X_{\al})$ (compare \cite[Section 5.1]{Fu} or \cite{Va0}).
Now the result follows by a combination of Remark \re{blowup} and \rl{simple bounded}(d),(f).
\end{proof}

\begin{Lem}\label{L:alteration}
Let $\{Z_{\alpha} \subsetneq X_{\alpha}\subseteq\B{P}^n_{k_{\al}}\}_{\al}$ be a bounded collection,
where $X_{\al}$ is integral and $Z_{\al}\subsetneq X_{\al}$ is a closed subscheme. Then there exists $m\in\B{N}$, an alteration
$f_{\al}:\wt{X}_{\al}\to X_{\al}$ and a closed embedding $i_{\al}:\wt{X}_{\al}\hra \B{P}^m_{X_{\al}}$ over $X_{\al}$ for every $\al$ such that
$\wt{X}_{\al}$ is smooth, $f_{\al}^{-1}(Z_{\al})_{\red}$ is a union of smooth divisors with normal crossings, and the collection of closed embeddings $\{i_{\al}\}_{\al\in\La}$ is bounded.
\end{Lem}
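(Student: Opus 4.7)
The plan is to combine de Jong's alteration theorem \cite[Theorem 4.1]{dJ} with a spreading-out argument and Noetherian induction on the parameterizing base, in the spirit of the other stability results in this appendix. Let $Z \subseteq X \subseteq \B{P}^n_S$ be a family parameterizing the collection $\{Z_\al \subseteq X_\al\}_\al$, with $S$ of finite type over $\B{Z}$. Using \rl{reduced irreducible} I may replace $S$ by a suitable surjective cover so that the non-empty geometric fibers of $X \to S$ are integral, matching the hypothesis that each $X_\al$ is integral. By Noetherian induction on $\dim S$ it suffices to exhibit a dense open $U \subseteq S$ producing the desired bounded subcollection; the complement $S \sm U$ is handled inductively, and one takes $m$ to be the maximum of the finitely many values arising in the process, embedding smaller projective spaces linearly into $\B{P}^m$.

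Restricting to the case $S$ integral, I would apply de Jong's theorem to the generic fiber pair $(Z_\eta \subsetneq X_\eta)$ to obtain an alteration $\wt{f}_\eta:\wt{X}_\eta \to X_\eta$ with $\wt{X}_\eta$ smooth and $\wt{f}_\eta^{-1}(Z_\eta)_{\on{red}} = \bigcup_{i \in I} D_i$ a strict normal crossings divisor. Standard limit arguments (\cite[EGA IV.8]{EGAIV(III)}) spread this out to an alteration $\wt{f}:\wt{X} \to X_U$ over some dense open $U \subseteq S$, together with closed subschemes $\wt{D}_i \subseteq \wt{X}$ extending the $D_i$. I would then shrink $U$ further so that the following open conditions hold on every geometric fiber: $\wt{X} \to U$ is smooth, each $\wt{D}_i \to U$ is smooth of relative codimension one, the $\wt{D}_i$ meet with strict normal crossings, and $\wt{f}^{-1}(Z_U)_{\on{red}} = \bigcup_{i \in I} \wt{D}_i$. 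These reductions use the openness of the smooth locus (cf.\ the proof of \rl{smooth}(a)) together with the constructibility results of \cite[EGA IV.9]{EGAIV(III)}. Since $X_U$ is quasi-projective over $U$, the alteration $\wt{X} \to X_U$ admits a closed embedding $\wt{X} \hra \B{P}^{m_U}_{X_U}$ over $X_U$, completing one step of the induction.

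The principal technical obstacle is the spreading out of the strict normal crossings property. One must verify that smoothness of each non-empty intersection $\bigcap_{i \in J} \wt{D}_i \to U$ and the transversality of the divisors are each open conditions on the base. I would handle this by applying \rl{smooth}(a) (or rather its relative version implicit in its proof) to each of the finitely many intersections $\wt{D}_J := \bigcap_{i \in J} \wt{D}_i$, together with a local coordinate argument near the generic fiber: after a Zariski-local \'etale coordinate change the $\wt{D}_i$ are locally cut out by coordinate hyperplanes, and this normal form persists on a neighborhood of the generic fiber in $S$, hence on an open subset, which is exactly the dense open $U$ needed to run the inductive step.
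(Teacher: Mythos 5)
Your overall architecture — spread out de Jong's theorem over the base and run Noetherian induction on $S$ — matches the paper's proof. However, there is a genuine gap at the point where you apply de Jong: you apply \cite[Theorem 4.1]{dJ} to the generic fiber $X_\eta$ over the residue field $k(\eta)$ and assert that the resulting alteration $\wt{X}_\eta$ is \emph{smooth} over $k(\eta)$ with a strict normal crossings divisor. When $\eta$ lies in characteristic $p>0$ (which is unavoidable in the Noetherian induction, since the components of $S$ over $\B{F}_p$ must eventually be treated), the field $k(\eta)$ is finitely generated over $\B{F}_p$ and generally imperfect; de Jong's theorem over such a field produces a \emph{regular} alteration, and over an imperfect field regular does not imply smooth. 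If $\wt{X}_\eta$ fails to be smooth over $k(\eta)$, the smooth locus of $\wt{X}\to S$ does not contain the generic fiber, and no dense open $U\subseteq S$ will rescue your spreading-out step.

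The paper circumvents this precisely by applying de Jong to the \emph{geometric} generic fiber $X_{\ov\eta}$ over the algebraically closed field $\overline{k(\eta)}$, where regular and smooth coincide. The price is that the alteration and its snc divisor are a priori defined only over some finitely generated subextension of $\overline{k(\eta)}$, so the limit argument spreads them out over an integral scheme $S'$ of finite type over $\B{Z}$ mapping (dominantly) to $S$, rather than over a dense open $U\subseteq S$ directly. This is harmless for boundedness, since one may always replace the parameterizing base by a scheme dominating it: every geometric point of $S$ in the relevant locus lifts to $S'$. Your claim "spread this out to an alteration over some dense open $U\subseteq S$" therefore needs to be replaced by "over some integral $S'\to S$ of finite type with image containing a dense open," and the Noetherian induction then proceeds on the complement in $S$ as you describe. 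The rest of your reductions — openness of smoothness for each $\wt{D}_J$, flatness with geometrically integral fibers, constructibility of the defining conditions — are correct and match the paper once this fix is in place.
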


%
%
%
%
%
%
%
%
%
%

\begin{proof}
Let $Z \subsetneq X \subseteq \B{P}^n_S$ be a family parameterizing $\{Z_{\alpha} \subsetneq X_{\alpha}\subseteq\B{P}^n_{k_{\al}}\}_{\al}$. By \cite[Th\'eor\`eme 9.7.7,~Corollaire 9.5.2]{EGAIV(III)}, the set of points $s \in S$ such that $X_s$ is geometrically integral and $Z_s\neq X_s$ is  a constructible subset. Thus, arguing as in \rl{simple bounded}(b), we can assume that these properties are satisfied for all points $s \in S$. By the Noetherian induction on $S$, we may replace $S$ by an open non-empty subscheme, thus assuming that assume that $S$ is integral with generic fiber $\eta$ and geometric generic fiber $\ov{\eta}$.

By theorem of de Jong (\cite[Theorem 4.1]{dJ}), there exists  an alteration
$f_{\ov{\eta}}:\wt{X}_{\ov{\eta}}\to X_{\ov{\eta}}$ such that $\wt{X}_{\ov{\eta}}$ is a smooth projective variety over $\ov{\eta}$, and $f^{-1}_{\ov{\eta}}(Z_{\ov{\eta}})_{\red}$ is a union of smooth divisors $Z_{j,\ov{\eta}}$ with normal crossings. In particular, there exists a closed embedding  $i_{\ov{\eta}}:\wt{X}_{\ov{\eta}}\hra\B{P}^m_{X_{\ov{\eta}}}$ over $X_{\ov{\eta}}$.

For every morphism $S'\to S$ we set $X':=X\times_S S'$ and $Z':=Z\times_S S'$. By standard limit theorems, the morphism $\ov{\eta}\to S$ factors as $\ov{\eta}\to S'\to S$, where $S'$ is an integral scheme of finite type over $\B{Z}$ such that $\iota_{\ov{\eta}}$ is a pullback of a closed embedding $i:\wt{X}'\hra\B{P}^m_{X'}$, and each $Z_{j,\ov{\eta}}$ is a pullback of a closed subscheme $Z_j\subseteq \wt{X}'$.
Consider the composition $f:\wt{X}'\hra \B{P}^m_{X'}\to X'$.

Replacing $S'$ by an open subscheme, one can assume that  $\wt{X}'\to S'$ is smooth, $\{Z_j\}_j$ are smooth divisors over $S'$ with normal crossings, that is, $\cap_{j\in J}Z_j$ is smooth over $S'$ of correct relative dimension, and $f^{-1}(Z')_{\red}=\cup_j Z_j$.
Furthermore, we can assume that $X'\to S'$ and $\wt{X}'\to S'$ are flat and have geometrically integral fibers.

Then for every geometric point $s'$ of $S'$, the induced morphism $f_s:\wt{X}'_s\to X'_s$ is an alteration (being a surjective morphism between varieties of the same dimension), $\wt{X}'_s$ is smooth and $f_s^{-1}(Z'_s)_{\red}$ is a union of smooth divisors with normal crossings. Now
the assertion of the lemma follows by Noetherian induction.
\end{proof}

\begin{Lem}\label{L:degree bound}
Let $\{f_{\alpha} \colon X_{\alpha} \to Y_{\alpha}\}_{\alpha \in \Lambda}$ be a bounded collection of morphisms, where
each $X_{\al}$ and $Y_{\al}$ are integral, while $f_{\al}$ is generically finite.
Then collection $\{\on{deg}(f_{\alpha})\}_{\al}$ of positive integers is bounded.
\end{Lem}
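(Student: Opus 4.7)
The plan is to pick a family $f : X \to Y$ of schemes over a base $S$ of finite type over $\Z$ parameterizing $\{f_\al\}_{\al\in\La}$, and to show by Noetherian induction on $S$ that the function $s \mapsto \deg(f_s)$ takes only finitely many values on the locus of $s \in S$ at which $f_s$ is a dominant generically finite morphism between integral schemes. Since every $f_\al$ is a geometric fiber of $f$ over some $s_\al \in S(k_\al)$, this will bound $\{\deg(f_\al)\}_\al$.

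To carry out the induction step, I would first use \rl{reduced irreducible} and \rco{reduced irreducible} to replace $S$ by a finite surjective cover and then pass to an irreducible component, so that $S$ becomes integral with generic point $\eta$. Using constructibility results from \cite[Proposition 9.6.1]{EGAIV(III)} and \cite[Th\'eor\`eme 9.7.7]{EGAIV(III)} (exactly as in the proofs of \rl{simple bounded}(b) and \rl{smooth}(b)), I would then shrink $S$ to an open dense subscheme so that both $X \to S$ and $Y \to S$ are flat with geometrically integral fibers and the generic fiber $f_\eta : X_\eta \to Y_\eta$ is a dominant generically finite morphism of some degree $d$.

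The key step is to show that $\deg(f_s) = d$ for every $s$ in an open dense $U \subseteq S$. By generic flatness applied to $f$, there exists a dense open $V \subseteq Y$ such that $f^{-1}(V) \to V$ is finite and flat of rank $d$. Another constructibility argument (the set of $s \in S$ for which $V \cap Y_s$ fails to be dense in $Y_s$ is constructible and misses $\eta$) allows me to shrink $S$ further so that $V_s := V \cap Y_s$ is dense in $Y_s$ for every $s \in S$. Flatness of $Y \to S$ (hence openness) together with $V_\eta \neq \emptyset$ guarantees $V_s \neq \emptyset$, so by base change $(f_s)^{-1}(V_s) \to V_s$ is finite flat of rank $d$; density of $V_s$ inside the integral scheme $Y_s$ then forces $f_s$ to be dominant generically finite of degree exactly $d$. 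Noetherian induction applied to the complement $S \setminus U$ concludes the argument, producing a finite (hence bounded) set of possible values for $\deg(f_s)$. The only subtle point is the uniform propagation of the generic degree to every fiber in an open dense subscheme of $S$, which rests on the same constructibility-in-families techniques used earlier in this appendix.
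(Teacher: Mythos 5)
Your proof is correct but takes a genuinely different route from the paper's. You run Noetherian induction on the parameter scheme $S$, invoking the EGA constructibility results (geometrically integral fibers, etc.) to shrink $S$, then using generic flatness on $f$ to find $V\subseteq Y$ open dense with $f^{-1}(V)\to V$ finite flat of rank $d$, and finally shrinking $S$ further so that $V_s$ stays dense in each $Y_s$. The paper instead observes that $\deg(f_\al)=\dim_{k(\eta_\al)}H^0(f_\al^{-1}(\eta_\al),\sO)$ where $\eta_\al$ is the generic point of $Y_\al$, and that $\eta_\al$ is simply a point of the total space $Y$ of the parameterizing family with $f^{-1}(\eta_\al)$ finite; it therefore suffices to bound $\dim_{k(y)}H^0(f^{-1}(y),\sO)$ uniformly over all $y\in Y$ with finite fiber. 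Noetherian induction on $Y$ itself then reduces to the case where $f$ is finite flat and the fiber rank is locally constant. Your approach works and is in the same spirit as the proofs of \rl{simple bounded}(b) and \rl{smooth}(b) elsewhere in the appendix, but it requires more machinery to propagate the generic degree across $S$; the paper's device of pushing the induction down to $Y$ via the $H^0$-formula bypasses the fiberwise-density and geometric-integrality bookkeeping entirely.
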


\begin{proof}
Let $f:X\to Y$ be a family parameterising $\{f_{\al}\}_{\al}$. Note that if $\eta_{\al}\in Y_{\al}$ is a generic point, then we have an equality
$\deg(f_{\al})=\dim_{k(\eta_{\al})}H^0(f_{\al}^{-1}(\eta_{\al}),~\sO_{f_{\al}^{-1}(\eta_{\al})})$. Therefore it suffices to show that there exists a constant $N$ such that for every $y \in Y$ such that $f^{-1}(y)$ is finite we have $\dim_{k(y)}H^0(f^{-1}(y),~\sO_{f^{-1}(y)})\leq N$. By Noetherian induction, we can replace $Y$ by an open subscheme (and $X$ by its preimage), thus assuming that $Y$ is irreducible. Moreover, we can assume that $f$ is generically finite, thus replacing Y by its open subscheme, thus assuming that $f$ is finite flat. In this case, the function  $y\mapsto \dim_{k(y)}H^0(f^{-1}(y),~\sO_{f^{-1}(y)})$ is constant.
\end{proof}

\begin{Lem}\label{L:ram}
Let $f:X\to S$ be a morphism between schemes of finite type over $\B{Z}$. Then there exists a constant $N$ such that for
every geometric point $s$ of $S$ the thickness of the geometric fiber $X_s$ (see \re{ramification}(a)) is at most $N$. In particular, the ramification $\ram(f)$ (see \re{ramification}(c)) is finite.
\end{Lem}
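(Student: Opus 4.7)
The plan is to deduce this directly from \rl{reduced irreducible}. Applying that lemma to $f: X \to S$, we obtain a surjective morphism $g: S' \to S$ of schemes of finite type over $\B{Z}$ such that, setting $X' := X \times_S S'$, all geometric fibers of $X'_{\red} \to S'$ are reduced. Since $X'$ is Noetherian, its nilradical $\C{N}' \subseteq \C{O}_{X'}$ is nilpotent, so we may fix $N \in \B{N}$ with $(\C{N}')^N = 0$. I claim this $N$ serves as our uniform bound.

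The key point is that for any geometric point $s: \Spec k \to S$, surjectivity of $g$ together with the fact that $k$ is algebraically closed allows us to lift $s$ to a geometric point $s': \Spec k \to S'$ of $S'$: the base change $S' \times_S \Spec k \to \Spec k$ is surjective of finite type over an algebraically closed field, so admits a $k$-point. This lift yields a canonical isomorphism $X_s \cong X'_{s'}$ of $k$-schemes. Now, by construction $(X'_{\red})_{s'}$ is reduced and topologically coincides with $X'_{s'}$, so it agrees with $(X'_{s'})_{\red}$ as schemes. Therefore the ideal sheaf $\C{I}_{(X'_{s'})_{\red}, X'_{s'}}$ is simply the restriction $\C{N}'|_{X'_{s'}}$, and its $N$-th power equals the restriction of $(\C{N}')^N = 0$. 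This bounds the thickness of $X_s$ by $N$.

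For the ``in particular'' statement, I would argue as follows. For a closed point $s \in S$, consider the scheme-theoretic fiber $f^{-1}(s)$ and let $I \subseteq \C{O}_{f^{-1}(s)}$ denote its nilradical. The inclusion $I \otimes_{k(s)} \overline{k(s)} \subseteq \C{N}_{X_s}$ together with the vanishing of $\C{N}_{X_s}^N$ (just established) gives $(I \otimes \overline{k(s)})^N = 0$; faithful flatness of $k(s) \to \overline{k(s)}$ then yields $I^N = 0$. Hence the thickness of $f^{-1}(s)$ is also at most $N$, and $\ram(f) \leq N < \infty$.

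There is no serious obstacle once \rl{reduced irreducible} is in hand; the only two points requiring care are the lift of a geometric point of $S$ to one of $S'$ along a surjective morphism of finite type (handled by the algebraic-closedness of the residue field, as above) and the faithful-flatness comparison between a scheme-theoretic fiber and its geometric counterpart in the final step.
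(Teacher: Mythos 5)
Your proof is correct and follows essentially the same route as the paper's: apply \rl{reduced irreducible} to pass to $X' \to S'$, take $N$ to be the nilpotency order of $\C{I}_{X'_{\red},X'}$ (which is exactly the thickness of $X'$, so your $N$ agrees with the paper's), and observe that $(X'_{\red})_{s'} = (X'_{s'})_{\red}$ so that the thickness of each geometric fiber is controlled by $N$. You spell out the ``in particular'' step via faithful flatness of $k(s) \hookrightarrow \overline{k(s)}$, which the paper leaves implicit; that extra care is sound and does no harm.
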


\begin{proof}
Let $f':X'\to S'$ be as \rl{reduced irreducible}, and let $N$ be the thickness of $X'$. We claim this $N$ satisfies the required property.
Indeed, by definition, we have $(\C{I}_{X'_{\red},X'})^N=0$. Hence for every geometric point $s'$ of $S'$, we have
$(\C{I}_{(X'_{\red})_{s'},X'_{s'}})^N=0$. By our assumption, $(X'_{\red})_s$ is reduced, therefore $(\C{I}_{(X'_{s'})_{\red},X'_{s'}})^N=0$,
thus the thickness of $X'_{s'}$ is at most $N$. Since every geometric fiber $X_s$ is isomorphic to some geometric fiber $X'_{s'}$, the assertion follows.
\end{proof}


\begin{Cor}\label{C:ram}

(a) For every bounded family $\{X_{\alpha}\}_{\al}$, the collection of thicknesses of $X_{\alpha}$ is bounded.

(b) Let $\{f_{\alpha} \colon X_{\alpha} \to Y_{\alpha}, Z_{\al}\subseteq Y_{\al}\}_{\alpha \in \Lambda}$, where $Z_{\al}\subseteq Y_{\al}$ is a closed subscheme, be a bounded collection. Then the collection $\{\on{ram}(f_{\alpha},Z_{\alpha})\}_{\al}$ of positive integers is bounded.

(c) Let $\{f_{\alpha} \colon X_{\alpha} \to Y_{\alpha}\}_{\alpha \in \Lambda}$ be a bounded collection. Then the collection
 $\{\on{ram}(f_{\alpha})\}_{\al}$ of positive integers is bounded.

%

\end{Cor}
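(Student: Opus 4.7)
My plan is to deduce all three parts of the corollary from Lemma \ref{L:ram}, together with the stability results for bounded collections proved earlier in the appendix.

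For (a), the argument is immediate: given a bounded family $\{X_\alpha\}_\alpha$, pick a parameterizing morphism $X\to S$ and apply \rl{ram} to it. This produces a constant $N$ which bounds the thickness of every geometric fiber $X_s$, in particular the thickness of every $X_\alpha$.

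For (b), the plan is to reduce to (a) via a direct description of ramification as a thickness. By \rl{simple bounded}(f), the hypothesis that $\{f_\alpha\colon X_\alpha\to Y_\alpha,\,Z_\alpha\subseteq Y_\alpha\}_\alpha$ is bounded implies that the collection of schematic preimages $\{f_\alpha^{-1}(Z_\alpha)\subseteq X_\alpha\}_\alpha$ is bounded. In particular, the collection $\{f_\alpha^{-1}(Z_\alpha)\}_\alpha$ of schemes is bounded, so (a) gives a uniform bound on their thicknesses. Since $\ram(f_\alpha,Z_\alpha)$ is by definition the thickness of $f_\alpha^{-1}(Z_\alpha)$ (see \re{ramification}(b)), this is exactly the desired bound.

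For (c), the main point is to apply \rl{ram} to the total family, not to $S$. Choose $f\colon X\to Y$ parameterizing $\{f_\alpha\}_\alpha$ over some base $S$, and apply \rl{ram} to $f\colon X\to Y$ itself (now viewed as a morphism between schemes of finite type over $\Z$). This yields a constant $N$ bounding the thickness of every geometric fiber $X_y$ for $y$ a geometric point of $Y$. Now any closed point $y\in Y_\alpha$ is in particular a geometric point of $Y$ (via the composite $Y_\alpha\to Y$), and the fiber $f_\alpha^{-1}(y)$ coincides with the fiber $X_y$ of $f$. Hence $\ram(f_\alpha,y)\le N$ for every closed point $y\in Y_\alpha$, whence $\ram(f_\alpha)\le N$ uniformly in $\alpha$. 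No step should be a real obstacle, since the bulk of the work was already carried out in \rl{ram}; the only subtlety is realizing in (c) that one should apply that lemma to $X\to Y$ rather than to $X\to S$, so that the full range of closed points of every $Y_\alpha$ is covered at once.
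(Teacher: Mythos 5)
Your parts (a) and (b) match the paper's proof exactly. For part (c), you take a genuinely different route: the paper deduces (c) from (b) in a single line ("follows immediately from (b)"), which implicitly requires re-indexing the collection over pairs $(\alpha, y)$ with $y$ a closed point of $Y_\alpha$, taking $Z_{(\alpha,y)} := \{y\}$, and verifying that this larger collection is still bounded (e.g.\ by base-changing the parameterizing family $f:X\to Y$ over $S$ along $Y\to S$ and using the diagonal $Y\hra Y\times_S Y$). You instead apply Lemma~\ref{L:ram} directly to the total morphism $f:X\to Y$ (legitimate, since $X$ and $Y$ are of finite type over $\B{Z}$), and then identify the fiber $f_\alpha^{-1}(y)$ with the geometric fiber $X_{\bar y}$ of $f$ over $y$ viewed as a geometric point of $Y$. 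Both arguments are correct; yours has the advantage of avoiding the implicit re-indexing step and making explicit exactly which application of Lemma~\ref{L:ram} is doing the work, while the paper's is shorter once one is willing to accept the boundedness of the enlarged collection as routine.
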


\begin{proof}
(a) Let $f \colon X \to S$ be a family parameterizing collection $\{X_{\alpha}\}_{\al}$. Then the assertion
follows from Lemma \ref{L:ram} applied to $f$.

(b) By \rl{simple bounded}(f), the collection $\{f_{\alpha}^{-1}(Z_{\alpha})\}$ is a bounded. Thus the result follows from (a).

(c) follows immediately from (b).
\end{proof}

\subsection{Boundedness of collections of quasi-projective varieties}


%
%
%
%
%

The following result is central for what follows.

\begin{Thm}\label{T:boundedness_degree}
For every $n, \dt\in\B{N}$, the collection $\{X_{\al}\subseteq\B{P}^n_{k_{\al}}\}_{\al}$, where $X_{\al}\subseteq\B{P}^n_{k_{\al}}$
is closed subvariety of $\B{P}^n_{k_{\al}}$ of degree at most $\dt$, is bounded.
\end{Thm}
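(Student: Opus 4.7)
The plan is to exhibit a scheme of finite type over $\B{Z}$ that parameterizes all closed subvarieties of $\B{P}^n$ of degree at most $\dt$. I will outline the approach via defining equations, which is close in spirit to the suggestion attributed to Hrushovski in the acknowledgements.

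First, I would reduce to a fixed dimension: since $0\leq d=\dim X_\al\leq n$ takes only finitely many values, by partitioning the index set we may assume that all $X_\al$ have a common dimension $d$.

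The key geometric input is the classical fact that every integral closed subvariety $X\subseteq\B{P}^n_k$ of degree at most $\dt$ coincides, set-theoretically, with the intersection of the hypersurfaces of degree $\dt$ that contain it; this can be proved by a projection argument from a general linear subspace, or extracted from a Castelnuovo--Mumford regularity bound. Since the space of homogeneous forms of degree $\dt$ on $\B{P}^n$ has dimension $N:=\binom{n+\dt}{\dt}$, it follows that $X$ equals the reduction of the common zero locus of some $N$-tuple $(f_1,\ldots,f_N)$ of such forms.

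Next, consider the affine scheme $T$ over $\B{Z}$ whose points are $N$-tuples of homogeneous forms of degree $\dt$ in $n+1$ variables; this is of finite type over $\B{Z}$. Over $T$ there is a tautological closed subscheme $Z\subseteq\B{P}^n_T$, namely the common zero locus of the universal $N$ forms. By the constructibility results used throughout Appendix A (\cite[Proposition 9.6.1]{EGAIV(III)} and \cite[Th\'eor\`eme 9.7.7]{EGAIV(III)}), the subset $T^\circ\subseteq T$ of points $t$ at which $(Z_t)_{\red}$ is geometrically integral of dimension $d$ and degree at most $\dt$ is constructible. Hence there is a scheme $S$ of finite type over $\B{Z}$ and a morphism $S\to T^\circ$ hitting every point. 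Pulling $Z$ back to $S$, replacing it by $(Z\times_T S)_{\red}$, and then applying \rl{reduced irreducible}, produces, possibly after refining $S$, a family whose geometric fibers realize every closed integral subvariety of $\B{P}^n$ of dimension $d$ and degree at most $\dt$. By the key fact above, every $X_\al$ arises as such a fiber, so the collection $\{X_\al\subseteq\B{P}^n_{k_\al}\}_\al$ is bounded.

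The main obstacle is the uniform bound on the degrees and number of defining equations in terms of $n$ and $\dt$ alone; this is the only genuinely geometric ingredient and does not follow formally from Noetherian limit arguments. An alternative route that bypasses this input is to invoke the Chow variety $\cal{C}_{d,\dt}(\B{P}^n_\B{Z})$ of effective $d$-cycles of degree $\dt$ in $\B{P}^n$, together with its universal family: this is projective of finite type over $\B{Z}$ and immediately yields the required parameter scheme.
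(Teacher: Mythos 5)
Your proposal is essentially correct and follows the same line as the paper, but with two noteworthy differences in how the key geometric fact is formulated and used. The paper proves (Lemma~\ref{L:noether} and Proposition~\ref{irreducible_component_Complete_intersection}) the slightly \emph{weaker} statement that $X$ is an \emph{irreducible component} of a complete intersection cut out by at most $n-\dim X$ forms of degree $\le\deg(X)$; it then invokes Corollary~\ref{C:reduced irreducible} to pass from the (obviously bounded) family of such intersections to the bounded family of their irreducible components. You instead assert the \emph{stronger} classical statement that $X$ is set-theoretically the full intersection of the degree-$\dt$ hypersurfaces containing it, which lets you avoid appealing to Corollary~\ref{C:reduced irreducible} and instead cut the parameter scheme down to the constructible locus where $(Z_t)_{\red}$ is geometrically integral. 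Both mechanisms work and both ultimately rest on a projection argument, but the paper's weaker claim is easier to prove directly (a simple induction on codimension using Lemma~\ref{L:noether}), whereas your stronger claim -- while also classical and provable by projecting from general linear centers -- is left as a black box in your write-up; in a self-contained proof you would need to supply that argument. One genuinely different alternative you mention, the Chow variety $\mathcal{C}_{d,\dt}(\B{P}^n_{\B{Z}})$, would also work and is conceptually cleaner, but it imports a much heavier construction than the explicit complete-intersection approach that the paper (following Hrushovski's suggestion) is deliberately trying to avoid; one must also take care to restrict to the open locus of reduced irreducible cycles, which is again a constructibility argument. Finally, two minor remarks: the reduction to fixed dimension $d$ is harmless but unnecessary, and the condition ``degree at most $\dt$'' in your definition of $T^\circ$ can be dropped -- what matters is only that every $X_\al$ occurs as a fiber, not that all fibers have small degree.
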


Theorem \ref{T:boundedness_degree} is a standard consequence of a Kleiman's boundedness Theorem (\cite[Exp. XIII,~Corollary 3.10(ii)]{SGA6}, whose proof is rather involved. For convenience of the reader, we include a much shorter proof using a method  suggested to us by Udi Hrushovski.

Let $k$ be an algebraically closed field. The following fact is standard.

\begin{Lem}\label{L:noether}
Let $X \subsetneq \B{P}^n_k$ be a closed subvariety, let $p \in \P_k^n \sm X$ be a closed point, and let $\pi_p \colon \B{P}^n_k \sm \{p\} \to \B{P}^{n-1}_k$ be the linear projection from $p$. Then $\pi_p|_X$ is a finite map, and we have an inequality $\on{deg}(\pi_p(X)) \leq \on{deg}(X)$.
\end{Lem}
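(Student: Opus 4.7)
The plan is to establish the two claims separately: first finiteness of $\pi_p|_X$, and then the degree inequality.

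For finiteness, I would observe that $\pi_p|_X$ is proper since $X$ is projective over $k$, so it suffices to check that its fibers are finite. The fiber of $\pi_p$ over a point $q \in \B{P}^{n-1}_k$ is the punctured line $L_{p,q} \smallsetminus \{p\}$, where $L_{p,q}$ denotes the unique line in $\B{P}^n_k$ through $p$ and $q$; hence the fiber of $\pi_p|_X$ above $q$ equals $L_{p,q} \cap X$. If this intersection were infinite, then since $L_{p,q} \cong \B{P}^1_k$ is an irreducible curve of dimension one, we would have $L_{p,q} \subseteq X$, which forces $p \in X$ and contradicts the hypothesis $p \notin X$. Thus all fibers of $\pi_p|_X$ are finite, so $\pi_p|_X$ is a proper quasi-finite morphism, and hence finite.

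For the degree inequality, I would carry out the standard generic linear subspace argument. Set $r := \dim X$, so $\dim \pi_p(X) = r$ by finiteness of $\pi_p|_X$. Choose a generic linear subspace $H \subseteq \B{P}^{n-1}_k$ of codimension $r$, arranged so that $H \cap \pi_p(X)$ is a reduced finite set of exactly $\deg(\pi_p(X))$ points. The preimage $L := \pi_p^{-1}(H) \cup \{p\}$ is a linear subspace of $\B{P}^n_k$ of codimension $r$ containing $p$, and since $p \notin X$ we have $L \cap X = \pi_p^{-1}(H) \cap X = (\pi_p|_X)^{-1}(H)$. On the one hand, $\pi_p|_X$ sends this preimage surjectively onto $H \cap \pi_p(X)$, which yields
\[
\#(L \cap X) \;\geq\; \#(H \cap \pi_p(X)) \;=\; \deg(\pi_p(X)).
\]
On the other hand, for generic $H$ the intersection $L \cap X$ is zero-dimensional, so Bezout's theorem applied to the codimension-$r$ linear subspace $L \subseteq \B{P}^n_k$ gives $\#(L \cap X) \leq \deg(L) \cdot \deg(X) = \deg(X)$. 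Combining these two estimates yields $\deg(\pi_p(X)) \leq \deg(X)$, as required.

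The only substantive subtlety is arranging the genericity of $H$ so that simultaneously $H \cap \pi_p(X)$ consists of exactly $\deg(\pi_p(X))$ reduced points and $L \cap X$ is zero-dimensional. Both conditions fail only on proper closed subsets of the Grassmannian parametrizing codimension-$r$ linear subspaces $H \subseteq \B{P}^{n-1}_k$, so a generic choice suffices.
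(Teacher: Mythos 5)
The paper states this lemma without proof (with the remark ``The following fact is standard''), so there is no in-paper argument to compare against; your job is simply to supply the missing proof, and your argument is correct. The finiteness part (proper plus quasi-finite, with quasi-finiteness coming from the fact that an infinite closed subscheme of the irreducible curve $L_{p,q}$ would be all of $L_{p,q}$ and hence force $p\in X$) is exactly right, and the degree bound via a generic linear slice $H\subseteq\B{P}^{n-1}_k$, lifted to the linear space $L=\Span(p,H)$ of the same codimension in $\B{P}^n_k$, combined with Bezout in the form $\#(L\cap X)\le\deg(L)\deg(X)=\deg(X)$ when the intersection is proper, is the standard route. One small simplification you could note: the second genericity requirement ($L\cap X$ zero-dimensional) is automatic once $H\cap\pi_p(X)$ is finite, because $L\cap X=(\pi_p|_X)^{-1}(H)$ and $\pi_p|_X$ has already been shown finite, so only a single open dense condition on $H$ is actually needed. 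An alternative, slightly slicker, proof of the degree inequality uses the projection formula: since $\pi_p$ is a linear projection, $(\pi_p|_X)^*\C{O}_{\B{P}^{n-1}}(1)\cong\C{O}_X(1)$, so with $e:=\deg(\pi_p|_X)\ge 1$ one gets $\deg(X)=\int_X c_1(\C{O}_X(1))^r=e\int_{\pi_p(X)}c_1(\C{O}(1))^r=e\,\deg(\pi_p(X))$, giving the exact relation $\deg(X)=e\cdot\deg(\pi_p(X))\ge\deg(\pi_p(X))$. Either argument is acceptable here.
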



\begin{Prop}\label{irreducible_component_Complete_intersection}
Let $X \subsetneq \B{P}^n_k$ be a closed subvariety.~Then $X$ is an irreducible component of a complete intersection in $\B{P}^n_k$ defined by equations of degree at most $\deg(X)$.
\end{Prop}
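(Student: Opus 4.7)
I would argue by induction on the codimension $r := n - \dim X$, writing $d := \dim X$ and $e := \deg X$. When $r = 1$, $X$ is itself a hypersurface of degree $e$, defined by a single irreducible form, and the conclusion is immediate. So suppose $r \geq 2$ and that the proposition holds in every lower codimension and every projective space.

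Pick any closed point $p \in \B{P}^n_k \smallsetminus X$ and consider the linear projection $\pi_p \colon \B{P}^n_k \smallsetminus \{p\} \to \B{P}^{n-1}_k$. By Lemma \ref{L:noether}, the restriction $\pi_p|_X$ is finite and $Y := \pi_p(X) \subsetneq \B{P}^{n-1}_k$ is a closed irreducible subvariety of dimension $d$ and degree $\leq e$. The inductive hypothesis, applied to $Y$ (of codimension $r-1$ in $\B{P}^{n-1}_k$), furnishes a complete intersection $V(G_1, \ldots, G_{r-1}) \subseteq \B{P}^{n-1}_k$ with $Y$ as an irreducible component and $\deg G_i \leq e$. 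Choosing coordinates so that $p = [1{:}0{:}\cdots{:}0]$, the pullbacks $F_i := G_i(x_1, \ldots, x_n)$ are forms on $\B{P}^n_k$ of the same degrees, and $V(F_1, \ldots, F_{r-1})$ is the cone from $p$ over $V(G_1, \ldots, G_{r-1})$, whose irreducible components are the cones $C_1, \ldots, C_s$ (each of dimension $d+1$) over the components of $V(G_1, \ldots, G_{r-1})$; in particular $C := C_1$, the cone over $Y$, is an irreducible component containing $X$.

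The crux is to construct an additional form $F_r$ of degree $\leq e$ vanishing on $X$ but not identically on any $C_j$; granting this, $V(F_1, \ldots, F_r)$ has pure dimension $d$, so the irreducible variety $X$ of dimension $d$ contained in it is forced to be an irreducible component. To produce $F_r$, I would choose a point $p' \in C \smallsetminus (X \cup \bigcup_{j>1} C_j)$, nonempty and open in $C$ by a dimension count. The inductive hypothesis, applied to $\pi_{p'}(X) \subsetneq \B{P}^{n-1}_k$ (of dimension $d$ and degree $\leq e$), yields a complete intersection $V(G'_1, \ldots, G'_{r-1}) \subseteq \B{P}^{n-1}_k$ with $\deg G'_i \leq e$ and all components of dimension $d$. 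For each $j > 1$, since $p' \notin C_j$, the projection $\pi_{p'}|_{C_j}$ is generically finite, so $\pi_{p'}(C_j)$ has dimension $d+1$; for $j = 1$, the same holds provided $C$ is not a cone with vertex $p'$, which can fail only when $C$ is a linear subspace — a degenerate case in which $X \subsetneq C$ can be directly cut out as a hypersurface in $C$, giving the complete intersection by $r-1$ linear forms plus one form of degree $e$. In the nondegenerate case, each $\pi_{p'}(C_j)$ has dimension $d+1 > d$, so $\pi_{p'}(C_j) \not\subseteq V(G'_1, \ldots, G'_{r-1})$, and for each $j$ some $G'_{i(j)}$ does not vanish on $\pi_{p'}(C_j)$. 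After multiplying each pullback $\pi_{p'}^{*}G'_i$ by a suitable power of a generic linear form $L$ (chosen not to vanish identically on any $C_j$) to uniformize all degrees to $e$, each subspace $I(X)_e \cap I(C_j)$ is proper in $I(X)_e$, and since $k$ is infinite, the union of these finitely many proper subspaces cannot exhaust $I(X)_e$; a generic $k$-linear combination thus provides the desired $F_r$.

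The principal obstacle is to control the degree of $F_r$ by $e$ while simultaneously ensuring non-vanishing on every cone $C_j$. This is handled by uniformizing degrees through multiplication by powers of a single generic linear form and invoking the classical fact that a vector space over an infinite field is not the union of finitely many proper subspaces.
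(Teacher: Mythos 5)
Your plan is correct in outline and reaches the same conclusion, but the route to the final cutting equation $F_r$ diverges from the paper's in an essential way. Both proofs start identically: induct on codimension, project from a point $p\notin X$ to get $Y=\pi_p(X)\subsetneq\B{P}^{n-1}_k$, apply the inductive hypothesis to $Y$, and take the cone over the resulting complete intersection to obtain the first $r-1$ equations of degree $\leq e$. The paper then produces the last equation in one shot: iterate Lemma \ref{L:noether} a further $n-d-1$ times to get a linear projection $\pi_L\colon\B{P}^n_k\smallsetminus L\to\B{P}^{d+1}_k$ that is finite on the cone $W$; then $\pi_L(X)$ is a hypersurface of degree $\leq e$ in $\B{P}^{d+1}_k$, and $W''$ is the pullback of its defining form. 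The proof that $W\cap W''$ is a complete intersection is immediate from finiteness of $\pi_L|_W$ (each component of $W$, having dimension $d+1$, has image of dimension $d+1 > d = \dim\pi_L(X)$, hence cannot lie in $W''$). You instead reapply the inductive hypothesis a second time, to $\pi_{p'}(X)$ where $p'$ is a carefully chosen point of the cone $C$ itself, and then build $F_r$ as a generic linear combination (after degree-uniformization by a linear form $L$) of the pulled-back cutting forms. This works, but requires handling more bookkeeping: the dimension-count on $\pi_{p'}(C_j)$ for each component $C_j$, the degenerate case where $p'$ is a vertex of $C$, and the "not a union of finitely many proper subspaces" genericity argument. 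The paper's projection to $\B{P}^{d+1}$ collapses all of that into the single observation that a finite projection cannot send a variety into one of strictly smaller dimension. One small point in your treatment of the degenerate case: you write that "$C$ is not a cone with vertex $p'$" can fail only when $C$ is linear; the precise statement is that the vertex locus of $C$ is a linear subspace, so for non-linear $C$ it is proper and one can additionally require $p'$ to avoid it — this should be made explicit in your choice of $p'$, though it is a minor and easily fixable imprecision rather than a genuine gap.
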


\begin{proof}
We set $d:=\dim(X)$, and proceed by induction on $n-d$. If $d=n-1$, then $X\subset \B{P}^n_k$ is a hypersurface, so the result is obvious.
Assume now that $d< n-1$, and let $p \in \P_k^n \sm X$ be a closed point.~Then \rl{noether} implies that $\pi_p(X) \subsetneq \B{P}^{n-1}_k$ is a closed subvariety of dimension $d$ and degree at most $\deg(X)$. Hence by induction hypothesis $\pi_p(X)$ as an irreducible component of a complete intersection $W'$ in $\B{P}^{n-1}_k$ defined by equations of degree at most $\on{deg}(\pi_p(X))\leq\on{deg}(X)$.

Denote by $W\subseteq\B{P}^n$ the closure of $\pi_p^{-1}(W')\subseteq\B{P}^n_k\sm\{p\}$. Then $W$ is a complete intersection in $\B{P}^n_k$
of dimension $d+1$, defined by equations of degree at most $\on{deg}(\pi_p(X))\leq\on{deg}(X)$, and $X\subseteq W$.

Next, applying \rl{noether} $(n-d-1)$ times, we get a linear projection $\pi_L \colon \B{P}^n_k \sm L \to \B{P}^{d+1}_k$ such that
$\pi_L|_W:W\to \B{P}^{d+1}_k$ is finite. Moreover, $\pi_L(X)\subsetneq \B{P}^{d+1}_k$ is a hypersurface of degree
$\deg(\pi_L(X))\leq\deg(X)$.

Let $W''\subseteq \B{P}_k^n$ be the closure of $\pi_L^{-1}(\pi_L(X))\subseteq\B{P}^n_k\sm L$. Then $W''$ is a hypersurface of degree
$\deg(\pi_L(X))\leq\deg(X)$, and $X\subseteq W\cap W''$.

It remains to show that $W \cap W''$ is a complete intersection.
But this follows from the fact that since $W$ is a complete intersection, while $\pi_L|_W$ is finite, for each irreducible component $W_i\subseteq W$, we have $\pi_L(W_i)\not\subseteq \pi_L(X)$, thus $W_i\not\subseteq W''$.
\end{proof}

\begin{proof}[Proof of Theorem \ref{T:boundedness_degree}]
Since the collection of hypersurfaces in $\P_{k_{\al}}^n$ of degree at most $\dt$ are bounded, the assertion follows from a combination of Proposition \ref{irreducible_component_Complete_intersection} and Corollary \ref{C:reduced irreducible}.
\end{proof}

As a consequence, we obtain an easy criterion when a collection of quasi-projective schemes is bounded.

\begin{Cor} \label{C:bounded gdeg}
Let $n$ be fixed positive integer.~Then the collection $\{X_{\al}\}_{\al}$ of reduced locally closed subschemes of $\B{P}^n_{k_{\al}}$  is bounded if and only if the collection of geometric degrees $\{\on{gdeg}(X_{\al})\}_{\al}$ are bounded.
\end{Cor}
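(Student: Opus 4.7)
The plan is to prove the two implications separately. The \textbf{if} direction follows from combining \rt{boundedness_degree} with standard operations on bounded collections. Suppose $\gdeg(X_\alpha)\le\delta$ uniformly. By the definition of geometric degree for locally closed subschemes, for each $\alpha$ one may choose closed reduced $X_{1,\alpha},X_{2,\alpha}\subseteq\P^n_{k_\alpha}$ with $X_\alpha=X_{1,\alpha}\sm X_{2,\alpha}$ and $\gdeg(X_{i,\alpha})\le\delta$. Each $X_{i,\alpha}$ is a set-theoretic union of at most $\delta$ closed irreducible subvarieties of $\P^n_{k_\alpha}$, each of degree $\le\delta$. Since by \rt{boundedness_degree} this collection of subvarieties is bounded, iterating \rl{simple bounded}(a) at most $\delta$ times and taking the scheme-theoretic unions of the closed parameterizing subschemes inside a common $\P^n_S$ shows that $\{X_{1,\alpha}\}_\alpha$ and $\{X_{2,\alpha}\}_\alpha$ are each bounded. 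A final application of \rl{simple bounded}(a),(d) then yields a bounded parameterization of the differences $\{X_\alpha\}_\alpha$.

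For the \textbf{only if} direction, start from a parameterizing locally closed $X\subseteq\P^n_S$, form the scheme-theoretic closure $\tilde X\subseteq\P^n_S$, and note that $\tilde X\sm X$ is closed in $\P^n_S$. For every geometric point $s_\alpha$ one has, set-theoretically, $X_\alpha=(\tilde X_{s_\alpha})_{\red}\sm((\tilde X\sm X)_{s_\alpha})_{\red}$, and both $\{(\tilde X_{s_\alpha})_{\red}\}_\alpha$ and $\{((\tilde X\sm X)_{s_\alpha})_{\red}\}_\alpha$ are bounded by \rco{reduced irreducible}. By the definition of $\gdeg$ it therefore suffices to bound $\gdeg$ on geometric fibers of a bounded family of closed reduced subschemes of $\P^n$. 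Decomposing into irreducible components via \rco{reduced irreducible} a second time further reduces the problem to showing that, for a bounded family $Y\to S$ of closed integral subvarieties of $\P^n$, the degrees $\deg(Y_s)$ are uniformly bounded.

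This last claim I would establish by Noetherian induction on $S$: it suffices to find a non-empty open $U\subseteq S$ on which $\deg(Y_s)$ is bounded. Generic flatness furnishes a $U$ over which $Y|_U\to U$ is flat with geometrically integral fibers, and the Hilbert polynomial (hence the degree) of a flat projective family over a connected base is constant; so $\deg(Y_s)$ is constant on each connected component of $U$. The main obstacle is orchestrating the reductions in the \textbf{only if} direction so that the set-theoretic definition of $\gdeg$ is compatible with the scheme-theoretic parameterizing families; once \rco{reduced irreducible} is used to match up fiberwise reduced structures and irreducible-component decompositions, the remainder is the classical constancy of Hilbert polynomials in flat projective families combined with Noetherian induction.
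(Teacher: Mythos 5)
Your proof is correct and follows essentially the same route as the paper: both directions rest on Theorem \ref{T:boundedness_degree}, Lemma \ref{L:simple bounded}, Corollary \ref{C:reduced irreducible}, and (for the ``only if'' direction) generic flatness together with the constancy of Hilbert polynomials in flat projective families, organized by Noetherian induction. The only place to be slightly more careful is in the ``if'' direction: the scheme-theoretic union of the parameterizing subschemes in $\B{P}^n_S$ may have non-reduced geometric fibers even when the pieces do not, so one should pass through Corollary \ref{C:reduced irreducible} (as the paper does via Lemma \ref{L:simple bounded}(e)) before forming the difference.
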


\begin{proof}
Assume that the geometric degrees $\{\on{gdeg}(X_{\al})\}_{\al}$ are bounded. Then each $X_{\al}$ has a presentation $X_{\al}=X'_{\al}\sm X''_{\al}$, where
$X'_{\al},X''_{\al}\subseteq\B{P}^n_{k_{\al}}$ are closed reduced subschemes, and the geometric degrees $\on{gdeg}(X'_{\al})$ and
$\on{gdeg}(X''_{\al})$ are bounded. Thus, by \rl{simple bounded}(a),(d), we reduce to the case when each $X_{\al}\subseteq\B{P}^n_{k_{\al}}$ is closed. Since $\gdeg(X_{\al})$ is the sum of degrees of irreducible components of $X_{\al}$, the assertion in this case
follows from a combination of Theorem \ref{T:boundedness_degree} with \rl{simple bounded}(e) and  \rco{reduced irreducible}.

To show the converse, we have to show that if $S$ is a scheme of finite type over $\B{Z}$, and $X\subseteq\B{P}^n_S$ is a locally closed subscheme,
the geometric degrees of all reduced geometric fibers of $X\to S$ are bounded. Writing $X$ as $X_1\sm X_2$ such that $X_1,X_2\subseteq\B{P}^n_S$ are closed, we can assume that $X\subseteq\B{P}^n_S$ is a closed subscheme. Let $S',X'$ and $X'_1,\ldots,X'_m$ be as in \rl{reduced irreducible}.
Replacing $X\to S$ by $X'_i\to S'$, we can assume that all geometric fibers of $X\to S$ are integral. Replacing $S$ by a disjoint union of a constructible stratification (and $X$ by its pullback), we can assume that $X\to S$ is flat. In this case,  the degrees of all geometric fibers of $X\to S$ are locally constant, hence bounded.
\end{proof}

\begin{Cor} \label{C:gdeg mor}
For every $n,m,\dt\in\B{N}$, the following collection is bounded
\[
\{X_{\al}\subseteq\B{P}^n_{k_{\al}},X'_{\al}\subseteq\B{P}^m_{X_{\al}}\,|\, \gdeg(X_{\al})\leq \dt\text{ and }\gdeg(X'_{\al})\leq \dt\}_{\al}.
\]
\end{Cor}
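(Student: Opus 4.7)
The plan is to apply \rco{bounded gdeg} separately to the collections $\{X_\al\}_\al$ and $\{X'_\al\}_\al$, and then glue the resulting parameterizing families together using the standard constructibility argument from the proof of \rl{simple bounded}(b).

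First, by \rco{bounded gdeg}, the hypothesis $\gdeg(X_\al)\leq\dt$ ensures that $\{X_\al\subseteq\B{P}^n_{k_\al}\}_\al$ is bounded; fix a parameterizing family $X\subseteq \B{P}^n_S$. Next, by the very definition of geometric degree for subschemes of $\B{P}^m_X$ given in \re{geom deg}(c), the bound $\gdeg(X'_\al)\leq\dt$ means that $X'_\al$, viewed as a subscheme of $\B{P}^{mn+m+n}_{k_\al}$ via the Segre embedding, has geometric degree at most $\dt$. Applying \rco{bounded gdeg} again yields a parameterizing closed subscheme $X'_0\subseteq \B{P}^{mn+m+n}_T$ for $\{X'_\al\}_\al$. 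Since every geometric fiber of $X'_0\to T$ lies in the (closed) Segre image of $\B{P}^m\times\B{P}^n$, I would invoke \rl{simple bounded}(a),(b) to replace $X'_0$ by a family of the form $X'_1\subseteq (\B{P}^m\times\B{P}^n)_{T'}$ over some scheme $T'$ of finite type over $\B{Z}$.

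To merge the two parameterizations, I would set $U:=S\times_{\B{Z}} T'$ and pull both families back to $U$, obtaining closed subschemes $X_U\subseteq \B{P}^n_U$ and $X'_U\subseteq\B{P}^m\times\B{P}^n_U$. The required compatibility $X'_\al\subseteq \B{P}^m\times X_\al$ translates, on the geometric fiber over $u\in U$, to the statement that the natural inclusion $X'_{U,u}\cap(\B{P}^m\times X_U)_u\hra X'_{U,u}$ is an isomorphism. Exactly as in the proof of \rl{simple bounded}(b), \cite[Proposition 9.6.1(xi)]{EGAIV(III)} shows that this locus is a constructible subset $U_0\subseteq U$; taking a surjection $V\surj U_0$ from a scheme $V$ of finite type over $\B{Z}$ and pulling $X_U$, $X'_U$ back to $V$ produces a pair $X_V\subseteq \B{P}^n_V$, $X'_V\subseteq \B{P}^m_{X_V}$ parameterizing the original collection in the sense of \rn{bounded}(e)(ii).

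The only real difficulty is enforcing this last compatibility, since the two independent parameterizations produced by \rco{bounded gdeg} \emph{a priori} know nothing about one another; but it is handled cleanly by constructibility of containment in a family, which is by now a routine tool in this appendix.
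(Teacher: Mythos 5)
Your proposal is correct and takes essentially the same route as the paper: apply Corollary \ref{C:bounded gdeg} separately to $\{X_\al\}$ and to $\{X'_\al\}$ (the latter via its Segre embedding in $\B{P}^{mn+m+n}$), combine the two parameterizing families by base-changing to a common parameter scheme as in Lemma \ref{L:simple bounded}(a), and cut out the constructible locus where the fiberwise containment holds exactly as in the proof of Lemma \ref{L:simple bounded}(b). Your intermediate step of first landing $X'_0$ inside $(\B{P}^m\times\B{P}^n)_{T'}$ is an unnecessary but harmless refinement — the single constructibility argument against $\B{P}^m\times X_U$ would already give the result since $\B{P}^m_{X_\al}$ is itself a bounded family of closed subschemes of $\B{P}^{mn+m+n}$.
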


\begin{proof}
By \rco{bounded gdeg} and \rl{simple bounded}(a), the collection
\[
\{X_{\al}\subseteq\B{P}^n_{k_{\al}}, X'_{\al}\subseteq\B{P}^{nm+n+m}_{k_{\al}}\,|\,\gdeg(X_{\al})\leq\dt\text{ and }\gdeg(X'_{\al})\leq\dt\}_{\al}
\]
is bounded. Therefore the assertion follows from (the proof of) \rl{simple bounded}(b).
\end{proof}

\begin{Emp} \label{E:pfapp}
\begin{proof}[Proof of assertions \re{app}(c)-(k)]
(c) By \rco{bounded gdeg} and \rl{simple bounded}(a), the collection
$\{X_{\al},Y_{\al}\subseteq\B{P}^n_{k_{\al}}\,|\,\gdeg(X_{\al})\leq\dt\text{ and }\gdeg(Y_{\al})\leq\dt\}_{\al}$ is bounded. Therefore the collection $\{(X_{\al}\cap Y_{\al})_{\red}\subseteq\B{P}^n_{k_{\al}}\,|\,\gdeg(X_{\al})\leq\dt\text{ and }\gdeg(Y_{\al})\leq\dt\}_{\al}$ is bounded (by \rl{simple bounded}(a) and \rco{reduced irreducible}). Hence $\gdeg((X_{\al}\cap Y_{\al})_{\red})$ is bounded in terms of $n$ and $\dt$ (by \rco{bounded gdeg}).

(d) Combining \rco{bounded gdeg} and \rl{simple bounded}(a),(b) we get that the collection
\[
\{Z_{\al}\subseteq X_{\al}\subseteq\B{P}^n_{k_{\al}}\,|\,\gdeg(X_{\al})\leq\dt\text{ and }\gdeg(Z_{\al})\leq\dt\}_{\al}
\]
is bounded. Now arguing as in (c), the assertion follows from \rl{simple bounded}(d).

(e) Arguing as above, the boundness of $\gdeg(f^{-1}(Z)_{\red})$ follows from \rco{gdeg mor} and \rl{simple bounded}(f),
while the boundness of $\ram(f,Z)$ from \rco{ram}(b).

The remaining assertions are proved similarly: assertion (f) follows from \rl{simple bounded}(g); assertion
(g) from \rl{degree bound} and \rco{ram}(c); assertion (h) from \rl{smooth}(a); assertion
 (i) from \rl{smooth}(b); assertion (j) from \rco{blowup}; assertion (k) from \rl{alteration}.
\end{proof}
\end{Emp}

\begin{Emp} \label{E:pfclapp}
\begin{proof}[Proof of \rcl{app}]
Both assertions follow from the explicit formulas in the proof of \rp{blowup}, using previous results.

(a) Since $V=V_{\dim(X)}$, it suffices to show that the geometric degrees of the subschemes $V_j$, $Z_j$ and $U_j$ of $\B{P}_k^n$ defined by explicit formulas \form{f}-\form{zj} are bounded in terms of $n,\dt$ and $j$. By induction on $j$,
the assertion follows from a combination of \rl{gdeg}(a) and \re{app}(b)-(f).

(b) Consider collection of quadruples $\{(k_{\al},q_{\al},X^0_{\al},C_{\al}^0)\}_{\al\in\La}$ from \rt{main}(b), satisfying the assumptions of \re{assumptions}(a),(b). For $\al\in \La$, and we denote by $X_{\al}$,  $F_{\al}$ and $Z_{\al}$ the corresponding objects, denoted by
$X, F$ and $Z$, respectively, in the proof of \rp{blowup}.

By a combination of (a), \rl{gdeg}(a) and \re{app}(b)-(f) we see that the
 geometric degrees of $\{\ov{c_1(S_{\al})},{}^{\si^{-1}}\ov{c_2(S_{\al})}\subseteq\B{P}^n_{k_{\al}}\}_{\al\in\La, S_{\al}\in \on{Irr}(F_{\al})}$ and cardinalities $\{|\on{Irr}(F_{\al})|\}_{\al\in\La}$ are bounded. Hence, by a combination of \rco{bounded gdeg}
and \rl{simple bounded}(a),(e), the collection $\{Z_{\al}\subseteq X_{\al}\subseteq\B{P}^n_{k_{\al}}\}_{\al\in\La}$ is bounded.
Therefore the assertion follows by a combination of \rco{blowup} and \rco{bounded gdeg}.
\end{proof}
\end{Emp}

The following lemma lists simple properties of geometric degrees.

\begin{Lem} \label{L:gdeg}
Let $X\subseteq\B{P}_k^n$ be a reduced subscheme, and $\ov{X}\subseteq\B{P}_k^n$ the closure of $X$.

(a) For every $\si\in\Aut(k)$, we have an equality $\gdeg({}^{\si}X)=\gdeg(X)$.

(b) There exists a closed reduced subscheme $X''\subseteq\B{P}_k^n$ such that
$X=\ov{X}\sm X''$ and we have $\gdeg(X)=\gdeg(\ov{X})+\gdeg(X'')$. In particular, we have $\gdeg(\ov{X})\leq\gdeg(X)$.

(c) Let $X_1,\ldots,X_r$ be the irreducible components of $X$. Then we have inequalities $r\leq\gdeg(X)$ and  $\gdeg(X_i)\leq\gdeg(X)$ for all $i$.
\end{Lem}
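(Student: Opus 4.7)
My plan is to unwind the definitions from \re{geom deg}, handling the three parts in order.

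For (a), the automorphism $\si$ of $k$ induces an isomorphism of schemes ${}^{\si}X \to X$ covering $\si : \Spec k \to \Spec k$, and since $\P^n$ is defined over $\B{Z}$ the base change identifies ${}^{\si}\P^n_k$ canonically with $\P^n_k$. This puts irreducible components in bijection and preserves their degrees, because the degree is determined by the Hilbert polynomial and a field automorphism preserves the $k$-dimensions of graded pieces. The locally closed case follows by applying the closed case to each side of a decomposition $X = X_1 \sm X_2$.

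The main content is (b). The pair $(\ov{X}, (\ov{X} \sm X)_{\red})$ shows the defining set is nonempty, so the minimum is attained by some pair $(X_1^*, X_2^*)$. I would then establish the following claim: every irreducible component $V$ of $\ov{X}$ is in fact an irreducible component of $X_1^*$. Indeed, $V \cap X$ is nonempty (otherwise $V$ would lie in the union of the remaining components of $\ov{X}$) and open dense in $V$, so $V = \ov{V \cap X} \subseteq X_1^*$; any component $W$ of $X_1^*$ containing $V$ is not contained in $X_2^*$ (since $V \cap X$ meets $W$ and is disjoint from $X_2^*$), hence $W \sm X_2^*$ is dense open in $W$ and lies in $X$, forcing $W \subseteq \ov{X}$ and then $W = V$ by maximality of $V$ in $\ov{X}$. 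Granting this, the components of $X_1^*$ split into components of $\ov{X}$ and components contained in $X_2^*$, yielding $\gdeg(\ov{X}) \leq \gdeg(X_1^*)$ together with $\ov{X} \sm X_2^* = X_1^* \sm X_2^* = X$. By minimality $\gdeg(\ov{X}) + \gdeg(X_2^*) = \gdeg(X)$, so $X'' := X_2^*$ works; the final assertion $\gdeg(\ov{X}) \leq \gdeg(X)$ is then immediate.

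Part (c) follows quickly from (b). Let $X''$ be as there and $\ov{X} = V_1 \cup \cdots \cup V_r$ the irreducible decomposition; the irreducible components of $X$ are precisely $X_i = V_i \cap X = V_i \sm X''$, so the count of components of $X$ agrees with that of $\ov{X}$. The bound $r \leq \gdeg(\ov{X}) \leq \gdeg(X)$ is immediate from $\deg(V_i) \geq 1$, and the pair $(V_i, X'')$ witnesses $X_i = V_i \sm X''$, giving $\gdeg(X_i) \leq \deg(V_i) + \gdeg(X'') \leq \gdeg(\ov{X}) + \gdeg(X'') = \gdeg(X)$. The only substantive point in the whole argument is the component-level claim in (b); everything else is routine bookkeeping with the definition.
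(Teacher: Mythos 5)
Your proof is correct and follows essentially the same route as the paper: you pick a minimizing pair $(X_1^*,X_2^*)$, show that the irreducible components of $\overline{X}$ occur among those of $X_1^*$ (which the paper gets more tersely from the observation that $X$ is open in $X_1^*$), and then combine $X=\overline{X}\setminus X_2^*$ with minimality, deriving (c) from (b) exactly as the paper does. The extra classification of the components of $X_1^*$ is harmless but not needed for the inequality $\gdeg(\overline{X})\leq\gdeg(X_1^*)$.
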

\begin{proof}
(a) is clear.

(b) By definition, there exists  closed reduced subschemes $X',X''\subseteq\B{P}^n$ such that
$X=X'\sm X''$ and $\gdeg(X)=\gdeg(X')+\gdeg(X'')$. Then $X\subseteq X'$ is an open subscheme, thus $\ov{X}$ is a union of some irreducible
components of $X'$. Then $X=\ov{X}\sm X''$ and $\gdeg(\ov{X})\leq\gdeg(X')$, which implies the assertion.

(c) Note that $\ov{X}_1,\ldots,\ov{X}_r$ are irreducible components of $\ov{X}$. Then, by (b), we have inequalities $r\leq\gdeg(\ov{X})\leq\gdeg(X)$.
Choose $X''\subseteq\B{P}^n_k$, satisfying (b). Then we have  $X_i=\ov{X}_i\sm X''$, hence $\gdeg(X_i)\leq \deg(\ov{X_i})+\gdeg(X'')\leq\gdeg(\ov{X})+\gdeg(X'')=\gdeg(X)$.
\end{proof}

\section{Norm on cycles classes and uniform boundedness}\label{uniform_Gysin_Appendix}

Let $k$ be an algebraically closed field. Using the notion of geometric degree, in this section we define an integer valued norm on the Chow group of any quasi-projective variety and study its properties.

\subsection{Definition of norm and basic properties}
Motivated by Hrushovski's Norm \cite[Notation 10.12]{Hr}, we propose the following definition.






\begin{Def}\label{Hrushovski_Norm}
For a subscheme $X\subseteq\B{P}^n_k$ and $\al\in A_*(X)$, we define

\begin{equation}\label{norm_definition}
|\al|\colonequals \min \limits_{\sum_j m_j [Z_j]} \{\sum_j |m_j| \on{gdeg}(Z_j)\},
\end{equation}
\noindent where the minimum is taken over all representatives $\sum_j m_j [Z_j]\in Z_*(X)$ of $\al$, where each
$Z_j$ is a closed subvariety of $X$.
\end{Def}

\begin{Emp}\label{E:dep}
{\bf Remark.} By definition, the norm $|\al|$ depends on the embedding $\iota:X\hra \B{P}^n_k$. To indicate this dependence, we will sometimes
write  $|\al|_{\iota}$ instead of $|\al|$.
\end{Emp}

%
%
%
%


\begin{Lem}\label{L:elementary}
(a) For any $\al_1,\al_2 \in A_*(X)$ and $m_1,m_2\in\B{Z}$, we have an inequality
\[
|m_1\al_1+m_2\al_2|\leq |m_1| |\al_1|_i+|m_2||\al_2|.
\]

(b) Assume that $X\subseteq\B{P}^n_k$ is closed. Then for every closed equidimensional subscheme $Y\subseteq X$ of dimension $i$, its class $[Y]\in A_i(X)$ satisfies $|[Y]|=\deg(Y)$.
\end{Lem}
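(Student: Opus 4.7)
The plan is to treat (a) and (b) separately. Part (a) will follow directly from the subadditivity built into the infimum definition (\ref{norm_definition}). Given representatives $\al_1=\sum_j n_j[Z_j]$ and $\al_2=\sum_k p_k[W_k]$ attaining the minima $|\al_1|$ and $|\al_2|$, the formal sum $\sum_j (m_1 n_j)[Z_j] + \sum_k (m_2 p_k)[W_k]$ is a representative of $m_1\al_1+m_2\al_2$ in $Z_*(X)$, so
\[
|m_1\al_1+m_2\al_2|\;\le\;|m_1|\sum_j|n_j|\gdeg(Z_j)+|m_2|\sum_k|p_k|\gdeg(W_k)=|m_1||\al_1|+|m_2||\al_2|.
\]
Possible coincidences $Z_j=W_k$ between the two sums cause no trouble: regrouping uses only the triangle inequality on $\Z$, which preserves the bound.

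For part (b) the plan is to prove matching upper and lower bounds. For the upper bound, I would exhibit the canonical representative: letting $Y_1,\dots,Y_r$ be the irreducible components of $Y$ with generic points $\eta_l$ and multiplicities $m_l:=\on{length}(\sO_{Y,\eta_l})$, the class $[Y]\in A_i(X)$ is represented by $\sum_l m_l[Y_l]\in Z_i(X)$ (see \re{basic}(c)). Each $Y_l$ is irreducible, hence $\gdeg(Y_l)=\deg(Y_l)$, and additivity of the projective degree over irreducible components with multiplicities gives
\[
|[Y]|\;\le\;\sum_l m_l\,\deg(Y_l)\;=\;\deg(Y).
\]

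For the reverse inequality, I would push cycle classes forward along the closed embedding $\iota\colon X\hra\B{P}^n_k$. Since $A_i(\B{P}^n_k)=\Z\cdot[L]$ for $L$ a linear $i$-plane, and $\iota_*[Z]=\deg(Z)[L]$ for every closed subvariety $Z\subseteq X$ of dimension $i$ (which is the very definition of the projective degree), applying $\iota_*$ to any representative $[Y]=\sum_j n_j[Z_j]$ and comparing with $\iota_*[Y]=\deg(Y)[L]$ yields the numerical identity
\[
\deg(Y)\;=\;\sum_j n_j\,\deg(Z_j)\qquad\text{in }\Z.
\]
The triangle inequality on $\Z$ then gives $\deg(Y)\le\sum_j|n_j|\deg(Z_j)=\sum_j|n_j|\gdeg(Z_j)$, and taking the minimum over representatives yields $|[Y]|\ge\deg(Y)$, completing (b). I do not anticipate any serious obstacle: the proof relies only on the infimum definition, on the coincidence of geometric degree with degree for irreducible subvarieties, and on functoriality of proper pushforward on Chow groups (\re{pushforward.}).
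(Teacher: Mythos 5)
Your proof is correct and takes essentially the same approach as the paper's: for (b) the paper likewise obtains the upper bound from the canonical nonnegative representative $\sum_a n_a[Y_a]$ and the lower bound from the numerical identity $\deg(Y)=\sum_j m_j\deg(Y'_j)$ for any representative, followed by the triangle inequality. You merely make explicit the justification of that identity (pushforward of $[Y]$ along $X\hra\B{P}^n_k$ to $A_i(\B{P}^n_k)\cong\Z$), which the paper leaves implicit.
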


\begin{proof}
Assertion (a) is clear, so it remains to show (b). Notice that by definition, the cycle $[Y]\in Z_i(X)$ equals
$\sum_a n_a[Y_a]$, where $Y_a$ are the irreducible components of $Y$ and each $n_a\geq 0$. The assertion follows from
the fact that for every representative $\sum_{j=1}^r m_j [Y'_j]\in Z_i(X)$ of $[Y]\in A_*(X)$, we have an inequality
\[
\on{deg}(Y)=\sum_j  m_j \on{deg}(Y'_j)\leq \sum_j  |m_j| \on{deg}(Y'_j)=\sum_j  |m_j| \on{gdeg}(Y'_j).
\]
\end{proof}




The following simple lemma is crucial for what follows.

\begin{Prop}\label{naive_Bezout}
For every reduced subschemes $X,Y \subseteq \B{P}^n_k$ we have an inequality
\begin{equation*}\label{main_bound_app_uniform}
\on{gdeg}\left ( \left (X\cap Y\right )_{\on{red}} \right)  \leq \on{gdeg}(X) \on{gdeg}(Y).
\end{equation*}
\end{Prop}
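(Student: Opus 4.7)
The plan is to first establish the inequality in the closed case, and then to extend to locally closed subschemes via an explicit set-theoretic identity.

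Granting the closed case, here is how to treat the general one. Choose optimal presentations $X = X_1 \sm X_2$ and $Y = Y_1 \sm Y_2$ with $X_i, Y_j$ closed reduced subschemes realizing $\gdeg(X) = \gdeg(X_1)+\gdeg(X_2)$ and $\gdeg(Y) = \gdeg(Y_1)+\gdeg(Y_2)$. A direct set-theoretic check yields the key identity
\[
X \cap Y = (X_1 \cap Y_1) \sm \bigl((X_2 \cap Y_1) \cup (X_1 \cap Y_2)\bigr),
\]
in which both the minuend and the subtrahend are closed in $\B{P}^n_k$. Applying the definition of $\gdeg$ for locally closed subschemes, then the closed case of the proposition to each of the three closed intersections $X_1 \cap Y_1$, $X_2 \cap Y_1$, $X_1 \cap Y_2$, and finally the obvious subadditivity of $\gdeg$ on unions of closed reduced subschemes, we obtain
\[
\gdeg((X \cap Y)_{\red}) \leq \gdeg(X_1)\gdeg(Y_1) + \gdeg(X_2)\gdeg(Y_1) + \gdeg(X_1)\gdeg(Y_2),
\]
which is at most $(\gdeg(X_1)+\gdeg(X_2))(\gdeg(Y_1)+\gdeg(Y_2)) = \gdeg(X)\gdeg(Y)$, since the missing term $\gdeg(X_2)\gdeg(Y_2)$ is non-negative.

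For the closed case itself, decompose $X = \bigcup X_i$ and $Y = \bigcup Y_j$ into irreducible components. Then $(X \cap Y)_{\red} = \bigcup_{i,j}(X_i \cap Y_j)_{\red}$, and subadditivity of $\gdeg$ on unions of closed reduced subschemes reduces the task to the bound $\gdeg((V \cap W)_{\red}) \leq \deg(V)\deg(W)$ for irreducible closed subvarieties $V, W \subseteq \B{P}^n_k$.

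This last bound is the main (and essentially only non-formal) obstacle; it is the classical refined Bezout inequality of Fulton--MacPherson and Heintz. The cleanest route, given the paper's heavy reliance on Fulton's framework, is to invoke the refined intersection class $[V]\cdot[W] \in A_*(V\cap W)$ from \cite[Chapter 8]{Fu}: it is representable by a positive cycle whose support contains all the distinguished varieties of the intersection, and in particular every irreducible component of $V\cap W$, while its pushforward to $\B{P}^n_k$ has total degree exactly $\deg(V)\deg(W)$ by \cite[Theorem 12.3]{Fu}. Since every multiplicity is a positive integer, summing degrees of irreducible components of $V\cap W$ yields the desired inequality.
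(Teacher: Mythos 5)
Your reduction to the closed case follows the same route as the paper: present $X$ and $Y$ as differences of closed reduced subschemes and use a set-theoretic identity to peel off a closed piece. Your identity $X\cap Y=(X_1\cap Y_1)\sm((X_2\cap Y_1)\cup(X_1\cap Y_2))$ is a mild refinement of the paper's $X\cap Y=(X'\cap Y')\sm(X''\cup Y'')$; both work. Your further reduction to irreducible $V,W$ via components and subadditivity is likewise what the paper implicitly does before invoking Fulton.

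The flaw is in your treatment of the irreducible case. You invoke the Chapter 8 refined intersection class $[V]\cdot[W]\in A_{\dim V+\dim W-n}(V\cap W)$ and claim it is a positive cycle supported on all irreducible components of $V\cap W$, with pushforward of degree $\deg(V)\deg(W)$. This is false precisely when $\dim V+\dim W<n$: that Chow group is then zero, so $[V]\cdot[W]=0$, yet $V\cap W$ may be nonempty (e.g.\ two conics in $\B{P}^3_k$ meeting in four points). So the positivity-of-support claim and the degree-$\deg(V)\deg(W)$ claim both fail, and the argument collapses in exactly the excess-codimension situation that genuinely occurs here, since $V$ and $W$ are arbitrary components of $X$ and $Y$. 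The theorem you cite (Fulton's Theorem 12.3, equivalences of distinguished varieties) and the paper's reference (Example 8.4.6, the refined B\'ezout inequality) both avoid this by working with the ruled join $J(V,W)\subseteq\B{P}^{2n+1}_k$ and intersecting with the diagonal linear space, rather than with the diagonal Gysin map of Chapter 8; the join has dimension $\dim V+\dim W+1$, so the construction never enters negative dimension, and this is what makes the degree equation and positivity hold without a dimension hypothesis. So the conclusion you want is correct and your outer reductions are fine, but the inner appeal to the Chapter 8 class needs to be replaced by the join construction, as the paper does.
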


\begin{proof}
When $X$ and $Y$ are closed subschemes, the bound was proved in \cite[Example 8.4.6]{Fu} using the join construction.
In general, assume that $X= X'\sm X''$ and $Y= Y'\sm Y''$, where $X', X'', Y', Y''$ are closed reduced subschemes of $\B{P}^n_k$ such that $\on{gdeg}(X)= \on{gdeg}(X')+\on{gdeg}(X'')$ and  $\on{gdeg}(Y)= \on{gdeg}(Y')+\on{gdeg}(Y'')$. Using identity
$X\cap Y=(X'\cap Y') \sm (X''\cup Y'')$, the result follows from  \cite[Example 8.4.6]{Fu}.
\end{proof}

We will also be using the following standard lemma.

\begin{Lem}\label{degree_product}
Let $X \subseteq \B{P}_k^m$ and $Y \subseteq \B{P}_k^n$ be closed subvarieties of degrees $\dt_X$ and $\dt_Y$, respectively.~Then the degree of $X \times Y\subseteq \B{P}_k^m\times \B{P}_k^n$  is $\binom{\dim(X) +\dim(Y)}{\dim(X)} \dt_X\dt_Y$.
\end{Lem}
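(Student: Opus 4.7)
The plan is to compute the degree of $X \times Y$ under the Segre embedding $\sigma \colon \B{P}^m_k \times \B{P}^n_k \hookrightarrow \B{P}^{mn+m+n}_k$ via intersection theory on the product. Set $d_X = \dim(X)$, $d_Y = \dim(Y)$, and write $h_1, h_2 \in A^1(\B{P}^m_k \times \B{P}^n_k)$ for the pullbacks of the hyperplane classes from the two factors, so that $A^*(\B{P}^m_k \times \B{P}^n_k) = \B{Z}[h_1,h_2]/(h_1^{m+1}, h_2^{n+1})$ with $\int h_1^m h_2^n = 1$.

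First, I will recall the standard identifications: any closed subvariety $Z$ of $\B{P}^N_k$ of dimension $r$ and degree $\delta$ has class $\delta\, H^{N-r}$ in the Chow ring of $\B{P}^N_k$. Applying this to the two factors yields $[X] = \dt_X\, h_1^{m - d_X}$ and $[Y] = \dt_Y\, h_2^{n - d_Y}$, and the external product formula (which follows immediately from the K\"unneth decomposition of $A^*(\B{P}^m_k \times \B{P}^n_k)$) gives
\[
[X \times Y] \;=\; \dt_X \dt_Y \, h_1^{m - d_X} h_2^{n - d_Y}.
\]

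Next, since $\sigma^*\C{O}_{\B{P}^{mn+m+n}}(1) = \C{O}(1,1)$, the hyperplane class in the Segre embedding pulls back to $h_1 + h_2$. By the definition of degree (see \cite[Ex.~2.5.2]{Fu}) applied to the variety $X \times Y$ of dimension $d_X + d_Y$ inside $\B{P}^{mn+m+n}_k$, we have
\[
\deg(X \times Y) \;=\; \int_{\B{P}^m_k \times \B{P}^n_k} [X \times Y] \cdot (h_1 + h_2)^{d_X + d_Y}.
\]

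The last step is an elementary binomial expansion. Expanding $(h_1 + h_2)^{d_X + d_Y} = \sum_k \binom{d_X + d_Y}{k} h_1^k h_2^{d_X + d_Y - k}$ and multiplying by $\dt_X \dt_Y\, h_1^{m - d_X} h_2^{n - d_Y}$, only the term with $k = d_X$ yields the monomial $h_1^m h_2^n$ (all other terms vanish by degree in $h_1$ or $h_2$). That single surviving term contributes $\binom{d_X + d_Y}{d_X} \dt_X \dt_Y$, which is the claimed value. There is no real obstacle here; the only point requiring a bit of care is keeping straight the convention that the degree in question is taken with respect to the Segre-embedded hyperplane $h_1 + h_2$, not either of the factor hyperplanes individually.
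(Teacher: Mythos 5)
Your proof is correct, but it takes a different route from the paper's. The paper argues via Hilbert polynomials: the Hilbert polynomial of $X \times Y$ under the Segre embedding is the product $P_X(t)P_Y(t)$ (since $H^0(X\times Y, \sO(t,t)) \cong H^0(X,\sO(t))\otimes H^0(Y,\sO(t))$ for large $t$), and comparing the leading coefficients $\frac{\dt_X}{d_X!}\cdot\frac{\dt_Y}{d_Y!} = \frac{1}{(d_X+d_Y)!}\binom{d_X+d_Y}{d_X}\dt_X\dt_Y$ immediately yields the formula. You instead compute in the Chow ring $A^*(\B{P}^m\times\B{P}^n)\cong\B{Z}[h_1,h_2]/(h_1^{m+1},h_2^{n+1})$, using $[X\times Y]=\dt_X\dt_Y\,h_1^{m-d_X}h_2^{n-d_Y}$ and the pullback of the Segre hyperplane class $h_1+h_2$, with a binomial expansion producing the coefficient $\binom{d_X+d_Y}{d_X}$. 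The two arguments are close in spirit (both reduce to the same combinatorial identity) but invoke different machinery: the paper's version is shorter and needs only the multiplicativity of Hilbert polynomials, while yours stays entirely inside the intersection-theoretic framework of \cite[Ex.~2.5.2]{Fu} that the paper uses elsewhere, which some readers may find more transparent. Both are complete and correct.
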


\begin{proof}
The Hilbert polynomial of $X \times Y$ under the Segre embedding equals the product of the Hilbert polynomials of $X$ and $Y$. Thus the result follows.
\end{proof}
%
%
%
%


\begin{Lem}\label{L:pushforward}
Let $X \subseteq \B{P}^n_k$ be a subvariety, $Z \subseteq \B{P}^m_X$ a closed subvariety, and $p\colon \B{P}^m_X \to X$
the projection map.

(a) Assume that $X \subseteq \B{P}^n_k$ is closed. Then $\on{deg}(p_*([Z]) )\leq \on{deg}(Z)$.

(b) For a general $X$, we have an inequality $\on{gdeg}(p_*([Z]))\leq \on{gdeg}(Z)\on{gdeg}(X)$.
\end{Lem}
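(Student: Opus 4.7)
For (a), $p$ is proper, so $p(Z)$ is closed in $X$. If $p|_Z$ is not generically finite then $p_*([Z])=0$ and the inequality is trivial; otherwise $p_*([Z])=e\,[p(Z)]$ with $e=\deg(p|_Z)$, so it suffices to prove $e\deg(p(Z))\leq\deg(Z)$. I would compute $\deg(Z)$ in the Segre embedding $\B{P}^m_k\times\B{P}^n_k\hookrightarrow\B{P}^{mn+m+n}_k$: if $H_1,H_2$ denote the hyperplane pullbacks from the two factors, the Segre hyperplane pulls back to $H_1+H_2$, so with $d:=\dim Z$,
\[
\deg(Z)=(H_1+H_2)^d\cdot[Z]=\sum_{i=0}^{d}\binom{d}{i}H_1^iH_2^{d-i}\cdot[Z].
\]
Each summand $H_1^iH_2^{d-i}\cdot[Z]$ is a non-negative integer, since by the moving lemma a generic pair of linear subspaces of the appropriate codimensions on the two factors meets $Z$ properly in finitely many points counted with positive multiplicity. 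In particular $\deg(Z)\geq H_2^d\cdot[Z]$. Applying the projection formula to $q\colon\B{P}^m_k\times\B{P}^n_k\to\B{P}^n_k$ then yields $H_2^d\cdot[Z]=H^d\cdot q_*([Z])=H^d\cdot e[p(Z)]=e\deg(p(Z))$, where $H$ is the hyperplane class on $\B{P}^n_k$; this finishes (a).

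For (b), the plan is to reduce to (a) via closures and control the error via Proposition \ref{naive_Bezout}. Let $\bar X\subseteq\B{P}^n_k$ be the closure of $X$ and let $\bar Z\subseteq\B{P}^m_{\bar X}$ be the closure of $Z$, which under Segre is also the closure of $Z$ in $\B{P}^{mn+m+n}_k$. By Lemma \ref{L:gdeg}(b) applied to $Z$, $\deg(\bar Z)\leq\gdeg(Z)$; and applied to $X$ it furnishes a closed reduced $X''\subseteq\B{P}^n_k$ with $X=\bar X\setminus X''$ and $\gdeg(X)=\gdeg(\bar X)+\gdeg(X'')$. Applying (a) to the projection $\bar p\colon\B{P}^m_{\bar X}\to\bar X$ and to $\bar Z$ gives $e\deg(Z'')\leq\gdeg(Z)$, where $Z'':=\bar p(\bar Z)$ is the closure of $p(Z)$ in $\B{P}^n_k$ and $e=\deg(p|_Z)$.

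Since $p$ is proper, $p(Z)$ is closed in $X$, and $p(Z)=Z''\cap X=Z''\setminus(Z''\cap X'')$. By the definition of $\gdeg$ for locally closed subschemes and Proposition \ref{naive_Bezout},
\[
\gdeg(p(Z))\leq\deg(Z'')+\gdeg((Z''\cap X'')_{\red})\leq\deg(Z'')\bigl(1+\gdeg(X'')\bigr).
\]
Finally, Lemma \ref{L:elementary}(a) together with $\gdeg(\bar X)\geq 1$ yields
\[
|p_*([Z])|\leq e\,\gdeg(p(Z))\leq e\deg(Z'')\bigl(1+\gdeg(X'')\bigr)\leq\gdeg(Z)\bigl(\gdeg(\bar X)+\gdeg(X'')\bigr)=\gdeg(Z)\gdeg(X).
\]
The main subtlety is the bookkeeping between closures of $Z$ and $X$ in going from $\B{P}^m_X$ to $\B{P}^m_{\bar X}$ and back; the geometric core is the positivity of mixed degrees on $\B{P}^m_k\times\B{P}^n_k$ combined with the projection formula.
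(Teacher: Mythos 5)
Your proof is correct and follows essentially the same approach as the paper's. In part (a), the paper invokes Fulton's Example 8.4.2 to write $[Z]$ in $\B{P}^m\times\B{P}^n$ as a non-negative combination $\sum_j n_j[\B{P}^j\times\B{P}^{\dim Z-j}]$ and reads off $\deg(p_*[Z])=n_0\leq\deg(Z)$; your expansion of $(H_1+H_2)^{\dim Z}\cdot[Z]$ and the observation that each mixed term $H_1^iH_2^{d-i}\cdot[Z]$ is non-negative is the same calculation in a slightly more explicit form. In part (b), the paper bounds $\gdeg(p(Z))$ via the presentation $p(Z)=\bar p(\bar Z)\setminus X''$ directly, giving $\gdeg(p(Z))\leq\deg(\bar p(\bar Z))+\gdeg(X'')$ without any appeal to Bezout, whereas you first intersect $X''$ with $Z''=\bar p(\bar Z)$ and invoke Proposition \ref{naive_Bezout} to bound $\gdeg((Z''\cap X'')_{\red})\leq\deg(Z'')\gdeg(X'')$; this is a small detour but lands on the identical final estimate $\deg(Z'')(1+\gdeg(X''))$. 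One cosmetic point: the final citation of Lemma \ref{L:elementary}(a) is not really what is being used for $|p_*([Z])|\leq e\,\gdeg(p(Z))$ — that follows directly from the definition of the norm applied to the representative $e[p(Z)]$; but this does not affect correctness.
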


\begin{proof}

(a) The class of $[Z]$ in $\B{P}^m_k \times \B{P}^n_k$ is of the form $\sum_j n_j [\B{P}^{j}_k \times \B{P}^{\on{dim}(Z)-j}_k]$, where all the $n_j$'s are non-negative \cite[Example 8.4.2]{Fu}. On the other hand, the image of $p_*([Z])$ in $A_{\on{dim}(Z)}(\B{P}^{n}_k)$ is $n_0 \leq \on{deg}(Z)$.

(b) Note that $p(Z)\subseteq X$ is a closed subvariety. We may assume that $\on{dim}(p(Z))=\on{dim}(Z)$ (since otherwise $p_*([Z])=0$),
In this case, it suffices to show the inequality
\[
\on{deg}(p|_Z)\on{gdeg}(p(Z))\leq\on{gdeg}(Z)\gdeg(X).
\]

By \rl{gdeg}, there exists a closed reduced subscheme $X''\subseteq\B{P}^n_k$ such that $X=\ov{X}\sm X''$ and $\gdeg(X)=\gdeg(\ov{X})+\gdeg(X'')$.
Let $\ov{p}$ be the projection $\B{P}^m_{\ov{X}}\to \ov{X}$, and let $\bar{Z}\subseteq\B{P}^m_{\ov{X}}$ be the closure of $Z$. Then the image $\ov{p}(\bar{Z})\subseteq\B{P}^n_k$ is closed, and we have $p(Z)=\ov{p}(\bar{Z})\sm X''$, thus
\[
\on{gdeg}(p(Z))\leq \on{deg}(\ov{p}(\bar{Z}))+\on{gdeg}(X'')\leq  \on{deg}(\ov{p}(\bar{Z}))(1+\on{gdeg}(X'')).
\]
Combining this with equality $\on{deg}(\ov{p}|_{\bar{Z}})=\on{deg}(p|_Z)$ and part (a), we conclude that
\[
\on{deg}(p|_Z)\on{gdeg}(p(Z))\leq \on{deg}(\ov{p}|_{\bar{Z}})\on{deg}(\ov{p}(\bar{Z}))(1+\on{gdeg}(X''))\leq \on{deg}(\bar{Z})(1+\on{gdeg}(X'')).
\]
Since $1+\on{gdeg}(X'')\leq \on{gdeg}(X)$ and $\on{deg}(\bar{Z})\leq\on{gdeg}(Z)$ (by \rl{gdeg}), the assertion follows.
\end{proof}

\subsection{Uniform boundness of standard operations}

In this section we study the behaviour of the norm under standard operations in intersection theory. Our ultimate goal will be to study its behaviour under Gysin pullback and hence under intersection products.

\begin{Emp} \label{proj mor}
{\bf Set up.} Let $X\subseteq\B{P}_k^n$ and $Y\subseteq \B{P}^m_X\subseteq\B{P}^{m+n+mn}_k$ be subschemes, and let
$p$ be the composition $Y\hookrightarrow\B{P}^m_X\surj X$.
\end{Emp}


\begin{Lem}\label{L:pullback}
 Suppose that we are in the situation of \ref{proj mor}.

(a) There exists a constant $M$ depending only on $m,n$, the geometric degrees of $X_{\red},Y_{\red}$ such that
for every subvariety $Z\subseteq X$ we have we have an inequality $\on{gdeg}((p^{-1}(Z))_{\on{red}}) \leq M \on{gdeg}(Z)$.

(b) Assume that $p$ is smooth, and let $M$ be as in (a).  Then
for every $\al\in A_*(X)$ we have we have an inequality $|p^{*}(\al)|\leq M |\al|$.

(c) Assume that $Y\subseteq \B{P}^m_X$ is closed. Then $p$ is proper, and for every $\al\in A_*(Y)$ we have an inequality $|[p_*(\al)]|\leq |\al|\on{gdeg}(X_{\red})$. Moreover, we have $|[p_*(\al)]|\leq |\al|$, if $X\subseteq\B{P}_k^n$ is closed.
\end{Lem}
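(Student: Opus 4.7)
The plan is to handle the three parts in sequence, each reducing directly to a result established earlier in the excerpt.

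For part (a), I would identify the scheme-theoretic preimage $p^{-1}(Z)$ with an intersection inside the Segre embedding $\B{P}^m \times \B{P}^n \hookrightarrow \B{P}^{m+n+mn}$. Namely, $Y \subseteq \B{P}^m \times X$ and $\B{P}^m \times Z \subseteq \B{P}^m \times X$, and $p^{-1}(Z) = Y \cap (\B{P}^m \times Z)$ scheme-theoretically. Then Proposition~\ref{naive_Bezout} gives
\[
\gdeg(p^{-1}(Z)_{\red}) \leq \gdeg(Y_{\red})\,\gdeg((\B{P}^m \times Z)_{\red}),
\]
while Lemma~\ref{degree_product} bounds the second factor by $\binom{m+\dim Z}{m}\gdeg(Z) \leq \binom{m+n}{m}\gdeg(Z)$, using $\dim Z \leq n$. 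The constant $M := \binom{m+n}{m}\gdeg(Y_{\red})$ then works, and depends only on $m$, $n$, $\gdeg(X_{\red})$, $\gdeg(Y_{\red})$ as required.

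For part (b), the key point is that smoothness of $p$ forces the schematic preimage of a reduced subscheme to be reduced. For a closed subvariety $V \subseteq X$, the pullback $p^{-1}(V)$ is equidimensional and reduced, so $p^{*}[V] = [p^{-1}(V)]$ decomposes as a sum of the classes of its irreducible components, each with multiplicity one. By part (a), the total geometric degree of these components is at most $M\gdeg(V)$, which gives $|p^{*}[V]| \leq M\gdeg(V)$. For a general $\alpha \in A_{*}(X)$, linearity applied to a representative $\sum_j n_j[V_j]$ gives $|p^{*}(\alpha)| \leq M\sum_j |n_j|\gdeg(V_j)$, and the bound $|p^{*}(\alpha)| \leq M|\alpha|$ follows upon taking the infimum over representatives.

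For part (c), linearity reduces the assertion to a closed subvariety $Z \subseteq Y$, for which $p_{*}[Z] = \deg(p|_Z)[p(Z)]$ whenever $p|_Z$ is generically finite and zero otherwise. Since $p(Z)$ is irreducible, it is contained in some irreducible component $X_i$ of $X_{\red}$, and hence $Z \subseteq \B{P}^m_{X_i}$. Applying \rl{pushforward}(b) with ambient subvariety $X_i$ yields
\[
|p_{*}[Z]| \leq \deg(p|_Z)\gdeg(p(Z)) \leq \gdeg(Z)\gdeg(X_i) \leq \gdeg(Z)\gdeg(X_{\red}),
\]
and extending by linearity and taking the infimum gives $|p_{*}(\alpha)| \leq |\alpha|\gdeg(X_{\red})$. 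For the sharper bound when $X \subseteq \B{P}^n$ is closed, the same argument with \rl{pushforward}(a) in place of (b) removes the factor of $\gdeg(X_{\red})$. The main obstacle is less a conceptual difficulty than careful bookkeeping of which ambient projective embedding computes each geometric degree (Segre $\B{P}^{m+n+mn}$ for cycles on $Y$ versus $\B{P}^n$ for cycles on $X$), together with the observation that part (b) genuinely requires smoothness rather than mere flatness so that no multiplicities appear in $[p^{-1}(V)]$.
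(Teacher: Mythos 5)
Your proof follows the paper's argument closely for all three parts, in the same order and using the same ingredients: for (a), the identity $p^{-1}(Z)_{\red} = (Y \cap (\B{P}^m \times Z))_{\red}$ in the Segre space, Proposition~\ref{naive_Bezout}, and Lemma~\ref{degree_product}; for (b), smoothness forces $p^*[V]=[p^{-1}(V)]$ with reduced preimage, reducing to (a) via Lemma~\ref{L:elementary}(a); for (c), reduction to a closed subvariety and \rl{pushforward}. Your part (c) is in fact a bit more careful than the paper's terse ``applied to $X_{\red}$'', since \rl{pushforward} requires an \emph{integral} ambient base and you correctly route through an irreducible component $X_i$ of $X_{\red}$ and then bound $\gdeg(X_i)\le\gdeg(X_{\red})$.

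The one place where your argument, like the paper's, is slightly too quick is the step in (b) asserting that ``by part (a), the total geometric degree of these components is at most $M\gdeg(V)$''. Part (a) bounds $\gdeg(p^{-1}(V)_{\red})$, whereas you need $\sum_a\gdeg(V_a)$ over the irreducible components of the (typically only locally closed) preimage, and for a reducible locally closed subscheme this sum can strictly exceed the geometric degree of the whole: two lines in $\B{P}^2$ meeting at $p$, with $p$ deleted, have $\gdeg=3$, while each component $L_i\setminus\{p\}$ has $\gdeg=2$. The fix is easy and uses only data already in play: write $Y_{\red}=\ov{Y_{\red}}\setminus Y''$ with $\gdeg(Y_{\red})=\gdeg(\ov{Y_{\red}})+\gdeg(Y'')$; each $V_a$, being closed in $Y$, satisfies $V_a=\ov{V_a}\setminus Y''$, so
\[
\sum_a\gdeg(V_a)\le\sum_a\deg(\ov{V_a})+r\,\gdeg(Y'')\le\gdeg\bigl(\ov{p^{-1}(V)_{\red}}\bigr)\bigl(1+\gdeg(Y'')\bigr)\le M\bigl(1+\gdeg(Y_{\red})\bigr)\gdeg(V),
\]
using $r\le\gdeg(\ov{p^{-1}(V)_{\red}})$ from Lemma~\ref{L:gdeg}. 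This still depends only on $m,n,\gdeg(X_{\red}),\gdeg(Y_{\red})$, so the lemma and everything downstream are unaffected, but strictly speaking the constant in (b) is not ``the same $M$'' as in (a).
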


\begin{proof}
(a) Note that $\left ( p^{-1}(Z) \right)_{\on{red}} = ( Y \cap \left (\B{P}^m\times Z \right) )_{\on{red}}$, where the intersection is taking place inside of $\B{P}^{mn+m+n}_k$. Hence the lemma follows from Proposition \ref{naive_Bezout} and Lemma \ref{degree_product}.

(b) By \rl{elementary}(a), it suffices to show that for every irreducible closed subvariety $Z$ of $X$, we have an inequality
$|p^{*}([Z])|\leq M\gdeg(Z)$. Since  $p$ is smooth, we have $p^{*}([Z])=[p^{-1}(Z)]$ and $p^{-1}(Z)$ is reduced. Thus the assertion follows from (a).

(c) Arguing as in (b), the assertion follows from \rl{pushforward} applied to $X_{\red}$.
\end{proof}

As a consequence, we study the dependence of a norm on an embedding.

\begin{Cor}\label{norms_different_embeddings}
Let $\iota_1\colon X \hookrightarrow \B{P}^{n_1}_k$ and $\iota_2\colon X \hookrightarrow \B{P}^{n_2}_k$ be two embeddings of $X$.
Then there exists a constant $M$ depending only on $n_1,n_2$ and the geometric degrees of $\iota_1(X)$ and $\iota_2(X)$  such that for every
$\al\in A_*(X)$,~we have $|\al|_{\iota_1} \leq M|\al|_{\iota_2}$.
\end{Cor}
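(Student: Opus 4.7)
The plan is to introduce the product (Segre) embedding
\[
\iota \colonequals (\iota_1, \iota_2) \colon X \hookrightarrow \B{P}^{n_1}_k \times \B{P}^{n_2}_k \hookrightarrow \B{P}^{n_1 + n_2 + n_1 n_2}_k
\]
as an intermediate, and to compare the associated norm $|\cdot|_{\iota}$ with both $|\cdot|_{\iota_1}$ and $|\cdot|_{\iota_2}$. The key observation is that the two projections $p_i \colon \B{P}^{n_1}_k \times \B{P}^{n_2}_k \to \B{P}^{n_i}_k$ restrict to isomorphisms $p_i|_{\iota(X)} \colon \iota(X) \overset{\sim}{\to} \iota_i(X)$, which places us squarely in the framework of Lemmas~\ref{L:pushforward} and \ref{L:pullback}.

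First, viewing $\iota(X)$ as a closed subscheme of $\B{P}^{n_2}_{\iota_1(X)}$ via $p_1$, the proper (in fact iso) map $p_1|_{\iota(X)}$ combined with Lemma~\ref{L:pushforward}(c) yields $|\al|_{\iota_1} \leq \gdeg(\iota_1(X)) \cdot |\al|_{\iota}$ for every $\al \in A_*(X)$, after the canonical identifications $A_*(X) \cong A_*(\iota(X)) \cong A_*(\iota_1(X))$; this factor simplifies to $1$ when $\iota_1(X)$ is closed in $\B{P}^{n_1}_k$. Dually, viewing $\iota(X) \subseteq \B{P}^{n_1}_{\iota_2(X)}$ via $p_2$, the smooth (iso) morphism $p_2|_{\iota(X)}$ and Lemma~\ref{L:pullback}(b) produce a constant $M'$, depending only on $n_1, n_2, \gdeg(\iota_2(X))$ and $\gdeg(\iota(X))$, for which $|\al|_{\iota} \leq M' |\al|_{\iota_2}$. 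Chaining these two inequalities gives $|\al|_{\iota_1} \leq \gdeg(\iota_1(X)) \cdot M' \cdot |\al|_{\iota_2}$, which is the required estimate.

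The hard part will be to bound $\gdeg(\iota(X))$ in terms of $n_1, n_2, \gdeg(\iota_1(X))$ and $\gdeg(\iota_2(X))$: the circular attempt via $\iota(X) = \iota(X) \cap (\B{P}^{n_1}_k \times \iota_2(X))$ and Proposition~\ref{naive_Bezout} only reproduces $\gdeg(\iota(X))$ on the right-hand side. For this, I would invoke the boundedness machinery of Appendix~A: by Theorem~\ref{T:boundedness_degree} the images $\iota_1(X)$ and $\iota_2(X)$ each range over bounded families, and combined with a moduli argument for pairs of very ample line bundles of bounded degree on the abstract variety $X$, this shows that the collection of triples $(X, \iota_1, \iota_2)$ is itself bounded; consequently the family of graphs $\iota(X) \subseteq \B{P}^{n_1 + n_2 + n_1 n_2}_k$ is bounded, and Corollary~\ref{C:bounded gdeg} supplies the desired uniform bound. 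Alternatively one could attempt a direct intersection-theoretic bound by expanding $\gdeg(\iota(X)) = \int_X (h_1 + h_2)^d$ with $h_i = \iota_i^* \sO(1)$ and $d = \dim X$, and controlling the mixed intersection numbers via a nef-cone comparison between $h_1$ and $h_2$.
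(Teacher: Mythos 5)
Your proposal follows exactly the route taken in the paper: insert the joint (Segre) embedding $\iota=(\iota_1,\iota_2)$, observe that both projections restrict to isomorphisms on $\iota(X)$, and transfer norms using Lemma~\ref{L:pullback} (in the paper's compressed version this is packaged as the single identity $[\iota_1(Z)]=(p_1)_*\bigl([(\B{P}_k^{n_1}\times \iota_2(Z)) \cap \iota(X)]\bigr)$ together with Proposition~\ref{naive_Bezout} and Lemma~\ref{L:pullback}(a),(c)). Two small points of bookkeeping: what you call Lemma~\ref{L:pushforward}(c) does not exist — that lemma has only parts (a),(b); the pushforward inequality you want is Lemma~\ref{L:pullback}(c). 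Also, the reduction to $\al=[Z]$ should be stated the other way around: pick a representative cycle $\sum_j m_j[Z_j]$ that realises $|\al|_{\iota_2}$, and then bound $|\al|_{\iota_1}$ termwise via Lemma~\ref{L:elementary}(a); what one must actually prove is $\gdeg(\iota_1(Z))\leq M\,\gdeg(\iota_2(Z))$ for each subvariety $Z\subseteq X$.

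Your instinct about ``the hard part'' is a genuine and useful observation, and you should be aware that the paper's own proof is just as silent on it: every one of \ref{naive_Bezout}, \ref{L:pullback}(a) and \ref{L:pullback}(c), applied with $Y=\iota(X)$, feeds in a factor of $\gdeg(\iota(X))$, and the Corollary as stated requires this to be eliminated. Of your two proposed fixes, the boundedness-machinery one is, as written, circular: Theorem~\ref{T:boundedness_degree} bounds the images $\iota_1(X)$ and $\iota_2(X)$ \emph{separately} over independent parameter spaces, and the ``moduli argument for pairs of very ample line bundles on the abstract $X$'' that would tie them into a bounded family of \emph{graphs} is precisely what needs a degree bound for $\iota(X)$ to get off the ground (the relevant component of the Hilbert scheme is singled out by the Hilbert polynomial, i.e.\ by the very degree you are trying to control). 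Your second suggestion is the one that actually closes the gap: writing $\deg(\overline{\iota(X)})=\sum_j\binom{d}{j}(h_1^j h_2^{d-j})$ with $h_i$ the (nef) restrictions of the two hyperplane classes, the Khovanskii--Teissier inequality $(h_1^j h_2^{d-j})^d\leq (h_1^d)^j(h_2^d)^{d-j}$ bounds each mixed term by $\max(\deg\overline{\iota_1(X)},\deg\overline{\iota_2(X)})$, which in turn is $\leq\max(\gdeg(\iota_1(X)),\gdeg(\iota_2(X)))$ by Lemma~\ref{L:gdeg}(b). If you adopt that route, the argument is complete; otherwise, be explicit that the constant you produce still depends on $\gdeg(\iota(X))$, since that is strictly more than what the statement allows.
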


\begin{proof}
Using \rl{elementary}(a), we may assume that $\al$ is the class $[Z]$ of an irreducible closed subvariety $Z$ of $X$, and we want to show the
inequality $|[\iota_1(Z)]|\leq M|[\iota_2(Z)]|$. Let
\[
\iota:(\iota_1,\iota_2)\colon X \hookrightarrow \B{P}^{n_1}_k \times \B{P}^{n_2}_k \subseteq \B{P}^{n_1n_2+n_1+n_2}_k
\]
be the diagonal embedding of $X$, and let $p_1:\iota_1(X)\times \B{P}^{n_2}\to \iota_1(X)$ be the projection.
Using identity
\[
[\iota_1(Z)]=(p_1)_*([(\B{P}_k^{n_1}\times \iota_2(Z)) \cap \iota(X)]),
\]
the result now follows from Proposition \ref{naive_Bezout} and \rl{pullback}(a),(c).
\end{proof}

Next we show the uniformly boundness of Chern class operations.

\begin{Prop}\label{P:chern}
(a) Let $X\subseteq\B{P}^n_k$ be a subscheme and let $\C{E}$ be a vector bundle on $X$. Then there exists a constant $M$ such that for any $\al\in A_*(X)$,~we have an inequality $| c_i(\C{E}) \cap \al|\leq M |\al|$.

(b) Moreover, $M$ only depends on the {\em numerical invariants of $(X\subseteq\B{P}^n_k,\C{E})$}, by which we mean that if the pair $(X\subseteq\B{P}^n_k,\C{E})$ belongs to a bounded collection $\{(X_{\al}\subseteq\B{P}^n_{k_{\al}},\sL_{\al})\}_{\al}$
\footnote{We say that a collection  $\{(X_{\al}\subseteq\B{P}^n_{k_{\al}},\C{E}_{\al})\}_{\al}$ is {\em bounded}, if there exists a locally closed embedding $X\hra \B{P}^n_S$ parameterizing $\{X_{\al}\subseteq\B{P}^n_{k_{\al}}\}_{\al}$ (see \rn{bounded}(e),(i)) and a vector bundle on $\C{E}$ on $X$
whose pullback to each $X_{\alpha}$ is $\sL_{\alpha}$.}, then there exists a constant $M$ that satisfies property (a)
for every $(X_{\al}\subseteq\B{P}^n_{k_{\al}},\sL_{\al})$.
 \end{Prop}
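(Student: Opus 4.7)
By the triangle inequality \rl{elementary}(a), applied to any cycle representative of $\al$ achieving the minimum in \eqref{norm_definition}, it suffices to produce, for every closed subvariety $Z\subseteq X$, a bound
\[
|c_i(\sE)\cap[Z]|\leq M\cdot \gdeg(Z),
\]
with $M$ depending only on the numerical invariants of $(X\subseteq\B{P}^n_k,\sE)$. Throughout, $\sE$ may be assumed of constant rank $r$ on each connected component of $X$.

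The first reduction is from arbitrary Chern classes to first Chern classes of line bundles. In the operational Chow ring one has the identity $c(\sE)\cdot s(\sE)=1$, where $s(\sE)=\sum_{j\geq 0} s_j(\sE)$ is the total Segre class (\cite[Chapter 3]{Fu}), so each operator $c_i(\sE)$ is a universal $\B{Z}$-polynomial in $s_1(\sE),\ldots,s_i(\sE)$. Since
\[
s_j(\sE)\cap \al = p_*\bigl(c_1(\sO_{\B{P}(\sE)}(1))^{r-1+j}\cap p^*\al\bigr),
\]
where $p\colon\B{P}(\sE)\to X$ is the projective bundle, the uniform bounds on smooth pullback and proper pushforward in \rl{pullback}(b),(c) reduce the problem to bounding the operator $c_1(\sO_{\B{P}(\sE)}(1))^k\cap(-)$ uniformly for each $k$, once $\B{P}(\sE)$ has been equipped with a uniformly bounded closed embedding in projective space. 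Such an embedding is obtained by choosing $m$ large enough so that $\sE(m):=\sE\otimes\sO_X(m)$ is globally generated; a surjection $\sO_X^N\twoheadrightarrow \sE(m)$ then yields a closed embedding $\B{P}(\sE)\hookrightarrow X\times \B{P}^{N-1}_k$, which composed with the Segre embedding gives the desired embedding into projective space.

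Iterating the cap product then reduces everything to bounding a single application of $c_1(\sL)$ for a line bundle $\sL$ on a quasi-projective scheme $W\subseteq \B{P}^{N'}_k$. Pick $m'$ large so that both $\sL(m')$ and $\sO_W(m')$ are very ample; then $c_1(\sL)=c_1(\sL(m'))-c_1(\sO_W(m'))$ reduces to the very ample case. A very ample line bundle $\sL'$ on $W$ defines an embedding $\iota_{\sL'}\colon W\hookrightarrow \B{P}^{N''}_k$, and for any closed subvariety $Z\subseteq W$, the class $c_1(\sL')\cap[Z]$ is represented by a generic hyperplane section of $\iota_{\sL'}(Z)$, which is a cycle of degree $\deg_{\iota_{\sL'}}(Z)$. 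Hence $|c_1(\sL')\cap[Z]|_{\iota_{\sL'}}\leq \gdeg_{\iota_{\sL'}}(Z)$, and by Corollary \ref{norms_different_embeddings} this bound transfers to the original norm on $W$ with a multiplicative factor depending only on the boundedness data.

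The main obstacle is to ensure that the integers $m$, $m'$, $N$, $N''$ and the auxiliary generating surjections can be chosen uniformly across a bounded collection of pairs. This requires a relative form of Serre's theorems on generation and ampleness: for a parameterizing family $X\to S$ carrying a vector bundle $\sE$, there exists $m_0$ (depending only on the bounded data) such that $\sE(m)$, $\sL(m)$, $\sO_{\B{P}(\sE)}(1)(m)$ and their analogues are globally generated and/or very ample on every geometric fiber for $m\geq m_0$. This follows from cohomology and base change together with Noetherian induction on $S$, in the spirit of \rl{simple bounded} and \rco{blowup}. Combined with the uniform constants in \rl{pullback} and Corollary \ref{norms_different_embeddings}, this yields the uniform $M$ claimed in part (b).
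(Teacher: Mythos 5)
Your plan follows essentially the same route as the paper's proof: reduce from Chern to Segre classes, use the projective bundle formula on $\B{P}(\sE)$ together with Lemma \ref{L:pullback} to reduce to $c_1(\sL)$ for a line bundle, write $\sL$ as a difference of two very ample line bundles, and bound the very ample case by a hyperplane-section computation, with uniformity handled by spreading out over a parameterizing family.

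There is, however, a genuine gap at the crucial inequality $|c_1(\sL')\cap[Z]|_{\iota_{\sL'}}\leq\gdeg_{\iota_{\sL'}}(Z)$. Your justification---the generic hyperplane section is ``a cycle of degree $\deg_{\iota_{\sL'}}(Z)$''---controls only the quantity $\sum_j m_j\deg(\ov{Z_j})$ (with $\ov{Z_j}$ the projective closures of the components and $m_j$ their multiplicities), whereas the norm is controlled by $\sum_j |m_j|\,\gdeg_{\iota_{\sL'}}(Z_j)$. For a quasi-projective $Z$, the geometric degree of a locally closed component $Z_j$ is in general strictly larger than $\deg(\ov{Z_j})$, and the multiplicities $m_j$ need not equal $1$, so the degree bound does not imply the norm bound. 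The paper closes exactly this gap with Jouanolou's Bertini theorem \cite[Th\'eor\`eme 6.3(4)]{Jo}: for a generic hyperplane $H$, the scheme $H\cap Z$ is integral when $\dim Z\geq 2$ (hence a single component of multiplicity one) and reduced of dimension zero when $\dim Z=1$ (hence closed); in both cases $|[H\cap Z]|\leq\gdeg((H\cap Z)_{\red})\leq\gdeg(Z)$ follows from Proposition \ref{naive_Bezout}. You need this Bertini input; without it the step does not go through. The remaining elaborations in your plan---the global-generation construction of the embedding of $\B{P}(\sE)$, and the relative Serre argument for uniformity---are consistent with and essentially unpack what the paper states more tersely (``$p$ is smooth and projective''; ``$S$ is affine, so line bundles are differences of very ample'').
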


\begin{proof}
(a) Recall that the Chern class operations are homogeneous polynomials in the Segre class operations with integral coefficients \cite[Section 3.2]{Fu}. Hence by Lemma \ref{L:elementary}(a), it suffices to prove a similar assertion for the Segre class operations.
Let $e+1$ be the rank of $\C{E}$, let $p:P(\C{E})\to X$ be the projective bundle of lines in $\C{E}$, and let
$\C{O}(1)$ be the canonical line bundle on $P(\C{E})$. Recall (see \cite[Section 3.1]{Fu}) that Segre class operations are given by the formula
\[
s_i(\C{E})\cap \al=p_*[c_1(\C{O}(1))^{e+i}\cap p^*(\al)].
\]
Since $p$ is smooth and projective, it follows from \rl{pullback} that it suffices to show the assertion in the case when
$\C{E}$ is a line bundle $\C{L}$ and $i=1$.

Since $X$ is quasi-projective, $\sL$ can be written as a difference of two very ample line bundles (use \cite[Ex II, 5.7]{Ha}). Using Lemma \ref{L:elementary}(a) we may assume that $\sL$ is very ample, hence there exists an embedding $\iota:X \hookrightarrow \B{P}^{n'}_k$ such that $\sL$ is the pullback of $\sO(1)$. Using Lemma \ref{norms_different_embeddings}, we can there assume that $\iota$ is the inclusion $X\hra\B{P}^n_k$.

In this case, we claim that $|c_1(\C{L}) \cap\al|\leq |\al|$. Using Lemma \ref{L:elementary}(a) again, it suffices to show that for every
closed subvariety $Z\subseteq X$ we have an inequality $|c_1(\C{L}) \cap [Z]|\leq\gdeg(Z)$. Let $H \subseteq \B{P}^{n}_k$ be a hyperplane
such that $Z \nsubseteq H$. By definition, $c_1(\sL) \cap [Z]$ is the class $[H \cap Z]$ of the schematic intersection $H \cap Z\subseteq X$.
Moreover, by Bertini theorem \cite[Th\'eor\`eme 6.3(4)]{Jo}, there exists $H$ such that $H \cap Z$ is an integral scheme, if $\dim(Z)\geq 2$, and $H \cap Z$ is reduced, if $\dim(Z)=1$. In both cases, we have $|[H \cap Z]|\leq\gdeg((H \cap Z)_{\red})\leq\gdeg(Z)$ by Proposition
\ref{naive_Bezout}.

(b) The assertion follows from the fact that the argument of (a) ``can be carried out in families''.  Namely, we can assume that $S$ is affine.  Then, as in (a), for every locally closed subscheme $X\subseteq\B{P}_S$ every line bundle is a difference of two very ample bundles.
The rest of the argument follows by repeating the argument of (a) word-by-word and using Corollary \ref{C:bounded gdeg} and the uniformity assertions in Lemma \ref{norms_different_embeddings} and \rl{pullback}.
\end{proof}






Finally, we show the uniform boundedness under Gysin pullbacks.

\begin{Thm}\label{uniform_bounded_Gysin_pullback}
Let $i:Y\hra X$ be a closed regular embedding of codimension $d$ of subschemes of $\B{P}^n_k$, let $X' \subseteq \B{P}^m_X$ be
a subscheme, and let $i':Y':=Y\times_X X'\hra X'$ be the pullback of $i$.


(a) Then there exists a constant $M$ such that for every cycle $\al\in A_*(X')$, the refined pullback $i^*(\al)\in A_{*-d}(Y')$ satisfies
 $|i^*(\al)| \leq M |\al|$.

(b) Moreover, $M$ only depends on {\em the numerical invariants of $(Y\overset{i}{\hra}X\subseteq\B{P}^n_k, X' \subseteq \B{P}^m_X)$.}\footnote{As in \rp{chern}, it means that if a diagram $(Y\overset{i}{\hra}X\subseteq\B{P}^n_k, X' \subseteq \B{P}^m_X)$ belongs to a bounded collection $\{(Y_{\al}\overset{i_{\al}}{\hra}X_{\al}\subseteq\B{P}^n_{k_{\al}}, X'_{\al} \subseteq \B{P}^m_{X_{\al}})\}_{\al}$,
 then there exists a constant $M$ that satisfies property (a) for every diagram
 $(Y_{\al}\overset{i_{\al}}{\hra}X_{\al}\subseteq\B{P}^n_{k_{\al}}, X'_{\al} \subseteq \B{P}^m_{X_{\al}})$.}
\end{Thm}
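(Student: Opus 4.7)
The strategy is to follow Fulton's construction of the refined Gysin pullback (\cite[\S 6.2]{Fu}) and show that each ingredient is uniformly bounded in the norm $|\cdot|$, using \rp{chern}, the pushforward/pullback estimates of \rl{pushforward} and \rl{pullback}, the Bezout-type bound of Proposition~\ref{naive_Bezout}, and the boundedness results of Appendix~A.

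First I would reduce via \rl{elementary}(a) to showing $|i^*[V]|\le M\,\gdeg(V)$ for every closed subvariety $V\subseteq X'$, with $M$ depending only on the numerical invariants. I then invoke \cite[Prop.~6.1(a)]{Fu}: writing $k:=\dim V$, $W:=V\cap Y'$ (with the schematic structure), $g\colon W\to Y$, and $N:=g^*N_YX$ (a rank $d$ vector bundle on $W$), one has
\[
i^*[V]=\bigl\{c(N)\cap s(W,V)\bigr\}_{k-d},\qquad s(W,V)=\sum_{j\ge 0}\eta_*\bigl(c_1(\sO_{\tilde W}(1))^{j}\cap[\tilde W]\bigr),
\]
where $\tilde V:=\Bl_WV$, $\tilde W\subseteq\tilde V$ is the exceptional divisor, $\eta\colon\tilde W\to W$ the structure map, and $\sO_{\tilde W}(1):=\sO_{\tilde V}(-\tilde W)|_{\tilde W}$. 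The sum is finite, with at most $\dim X'$ nonzero terms.

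The heart of the argument is a linear bound $\gdeg(\tilde W)\le M_1\,\gdeg(V)$ in a fixed embedding. By Proposition~\ref{naive_Bezout}, $\gdeg(W)\le\gdeg(Y')\,\gdeg(V)$. By \rco{blowup}, $\tilde V$ admits a closed embedding $\tilde V\hra\B{P}^{m'}_V$ with $m'$ depending only on the numerical invariants, so that $\tilde V$ sits inside $\B{P}^{m'}\times\B{P}^{m+n+mn}$. Since $\tilde V\to V$ is birational and the $\B{P}^{m'}$-factor is defined by sections of the ideal sheaf $\sI_W\subseteq\sO_V$, tracking the bidegree of $\tilde V$ in $\B{P}^{m'}\times\B{P}^{m+n+mn}$ together with Proposition~\ref{naive_Bezout} yields $\gdeg(\tilde V)\le M'_1\,\gdeg(V)$ in the Segre embedding. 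The exceptional divisor $\tilde W\subseteq\tilde V$ is a Cartier divisor; a further Bezout estimate gives $\gdeg(\tilde W)\le M_1\,\gdeg(V)$, hence $|[\tilde W]|\le M_1\,\gdeg(V)$ by \rl{elementary}(b). Via these embeddings, the line bundle $\sO_{\tilde W}(1)$ and the bundle $N$ on $W$ both belong to bounded families of pairs in the sense of \rp{chern}(b).

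Once the linear bound on $|[\tilde W]|$ is established, the rest is routine assembly: \rp{chern}(b) bounds each $|c_1(\sO_{\tilde W}(1))^{j}\cap[\tilde W]|$; \rl{pushforward}(b) bounds $\eta_*$; summing the finitely many nonzero terms gives $|s(W,V)|\le M_2\,\gdeg(V)$. A final application of \rp{chern}(b) to the Chern classes $c_j(N)$ yields $|i^*[V]|\le M\,\gdeg(V)$, completing~(a). Assertion~(b) follows because every estimate depends only on a bounded collection of ancillary data, tracked via the stability lemmas of Appendix~A. The main obstacle is precisely the linear bound $\gdeg(\tilde W)\le M_1\,\gdeg(V)$; once this is in hand, the remainder of the proof is a direct application of the norm estimates already established in this appendix.
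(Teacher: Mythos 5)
Your high-level plan (reduce to a single subvariety $V$ by linearity, invoke Fulton's Segre-class description of $i^!$, control each ingredient by the norm estimates of this appendix) is conceptually sound, and you correctly isolate the real obstacle: a linear bound $\gdeg(\tilde W)\le M_1\,\gdeg(V)$ on the exceptional divisor of $\Bl_W(V)$. The difficulty is that your proposed justification of this bound does not work as stated. You apply \rco{blowup} to the pair $W\subset V$, but \rco{blowup} produces a uniform $m'$ only for a \emph{bounded} collection of pairs; since $V$ ranges over all closed subvarieties of $X'$ with no a priori degree bound, the family $\{W\subset V\}_V$ is not bounded, and the corollary cannot be invoked. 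Trying to restrict to $V$ of bounded degree is circular, because the constant $M$ in the theorem is precisely supposed to be uniform in $\gdeg(V)$. The sentence ``tracking the bidegree of $\tilde V$ in $\B{P}^{m'}\times\B{P}^{m+n+mn}$ together with Proposition~\ref{naive_Bezout} yields $\gdeg(\tilde V)\le M'_1\,\gdeg(V)$'' is where the gap sits: no mechanism is supplied that would produce a bound \emph{linear} in $\gdeg(V)$.

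The fix, which is also what the paper does, is to blow up the \emph{fixed ambient pair} $Y'\subseteq X'$ rather than the variable pair $W\subseteq V$. The scheme $X''=\Bl_{Y'}(X')$ together with its embedding $X''\hra\B{P}^{m'}_{X'}$ (via \rco{blowup}) is chosen once and for all, independently of $V$. One then takes the \emph{strict transform} $V''\subseteq X''$ of $V$. Since $p^{-1}(V)_{\red}$ has geometric degree $\le M\,\gdeg(V)$ by \rl{pullback}(a), and $V''$ is an irreducible component of $p^{-1}(V)_{\red}$ (\rl{gdeg}(c)), the linear bound follows immediately. Note that $\Bl_W(V)$ is in fact canonically identified with the strict transform $V''$, so the two constructions agree as schemes; what matters is that the \emph{embedding} is induced from the fixed one of $X''$, which is what makes the linearity automatic. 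The paper then replaces your Segre-class formula by the excess intersection formula \cite[Theorem 6.3]{Fu} applied to the Cartesian square relating $i$ and the codimension-one embedding $i'':Y''\hra X''$; this reduces codimension $d$ to codimension $1$ with a Chern class correction, and codimension $1$ is then handled directly via $c_1(\sO(Y))\cap-$ and \rp{chern}. Two further points your outline omits and the paper has to address: the degenerate case where $i'|_{V'}$ is already an isomorphism (i.e., $V\subseteq Y'$), which is treated separately by the self-intersection formula \cite[Corollary 6.3]{Fu} since there is no sensible blowup to take; and the case where $X'$ is not closed in $\B{P}^m_X$, which requires passing to closures and then controlling the discrepancy between $[Y\cap V]$ and $[Y'\cap V']$ by comparing with the ambient degree $\gdeg(X'_{\red})$, as in Step~3(b) of the paper's proof.
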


\begin{proof} We will carry out the proof in 6 steps.

\vskip 4truept

\noindent{\bf Step 1.} By \rl{elementary}(a), it suffices to show that there exists a constant $M$, only depending on the numerical invariants of
$(Y\overset{i}{\hra}X\subseteq\B{P}^n_k, X' \subseteq \B{P}^m_X)$, such that for every closed subvariety $V'\subseteq X'$, we have inequality
$|i^*([V'])|\leq M\gdeg(V')$.
%

\vskip 4truept

\noindent{\bf Step 2.} Assume that the closed embedding $i'|_{V'}:i'^{-1}(V')\hra V'$ is an isomorphism. In this case we claim that we have an equality
\begin{equation} \label{Eq:triv case}
i^*([V']_{X'})=c_d(N_Y(X)|_{Y'}) \cap [i'^{-1}(V')]_{Y'},
\end{equation}
where $N_Y(X)|_{Y'}$ is the pullback to $Y'$ of the normal bundle of $Y$ in $X$. Indeed, consider Cartesian diagram

\begin{equation*}
\xymatrix{ i'^{-1}(V') \ar[r]_{\sim}^{i'|_{V'}} \ar[d]_{q} & V' \ar[d]^-{p} \\
                 Y' \ar[r]^-{i'} \ar[d] & X' \ar[d] \\
                 Y \ar[r]^-{i} & X.}
\end{equation*}

Then using the self intersection formula \cite[Corollary 6.3]{Fu}, compatibility of Gysin pullbacks and with proper pushforward \cite[Theorem 6.2(a)]{Fu} and projection formula for Chern classes \cite[Theorem 3.2(c)]{Fu}, we have an equality
\[
i^*([V']_{X'})=i^*(p_*([V']_V))=q_*(i^*([V']_{V'}))=q_*(q^*(c_d(N_Y(X)|_{Y'})) \cap [i'^{-1}(V')]_{i'^{-1}(V')})=
\]
\[
=c_d(N_Y(X)|_{Y'}) \cap q_*([i'^{-1}(V')]_{i'^{-1}(V')})=c_d(N_Y(X)|_{Y'}) \cap [i'^{-1}(V')]_{Y'}.
\]

Moreover, if $(Y\overset{i}{\hra}X\subseteq\B{P}^n_k, X' \subseteq \B{P}^m_X)$ belongs to a bounded collection, then
the vector bundle $N_Y(X)|_{Y'}$ on $Y'$ belongs a bounded collection (by Corollary \ref{boundedness_normal_cone} and \rl{simple bounded}(f)). Since $|[i'^{-1}(V')]|\leq\gdeg(i'^{-1}(V'))=\gdeg(V')$, the assertion of Step 1 now
follows from a combination of \form{triv case} and Proposition \ref{P:chern}.

\vskip 4truept

From now on we assume that  $i'|_{V'}:i'^{-1}(V')\hra V'$ is not an isomorphism.

\vskip 4truept

\noindent{\bf Step 3.} Assume that $X\subseteq\B{P}^n$ is a closed subscheme, $d=1$ and $X'\to X$ is an open embedding with dense image.

\vskip 4truept

To simplify the notation, we will view $Y$ and $X'$ as a subschemes of $X$. Then the schematic intersection $Y'\cap V'\subseteq V'$ is a Cartier divisor, and we have an equalities
$i^*([V'])=i'^*([V'])=[Y'\cap V']\in A_*(Y')$ (see \cite[Ex 2.6.5 and Corollary 6.3]{Fu}) and $i'_*i'^*([V'])=c_1(\C{O}(Y'))\cap [V']$ (see \cite[Propositions 2.6(d) and 6.1(c)]{Fu}).

\vskip 4 truept

(a) Assume first that $X'=X$ (hence also $Y'=Y$ and $i'=i$). Set $V:=V'$. Then by \rl{elementary}(b), we have an equality
$|[Y\cap V]|=\deg(Y\cap V)=|i_*([Y\cap V])|$, which implies that $|i^*([V])|=|c_1(\C{O}(Y))\cap [V]|$.

Moreover, if $(Y\subseteq X\subseteq \B{P}^n)$ belongs to a bounded collection, then the line bundle $\C{O}(Y)$ on $Y$ belongs to a bounded collection (by \rl{simple bounded}(c)), so in this case the assertion of Step 1 follows from Proposition \ref{P:chern}.

\vskip 4truept

(b) In general, let $V\subseteq X$ be the closure of $V'$. It suffices to show an inequality
\begin{equation} \label{Eq:ineq gysin}
|i^*([V'])|\leq |i^*([V])|\gdeg(X'_{\red}).
\end{equation}
Indeed, the assertion of Step 1 would then follow from an inequality $\gdeg(V)\leq\gdeg(V')$ (see \rl{gdeg}(c)), Corollaries \ref{C:reduced irreducible},   \ref{C:bounded gdeg} and case (a), shown above.

Let $Z_1,\ldots,Z_r$ be all irreducible components of $Y\cap V$. Then each
$Z'_j:=Z_j\cap X'$ is either an irreducible components of $Y'\cap V'$ or empty. Choose a closed reduced subscheme $X_2\subseteq\B{P}^n_k$ such that
$X'=X\sm X_2$ and $\gdeg(X'_{\red})=\gdeg(X_{\red})+\gdeg(X_2)$. Then we have
$1+\gdeg(X_2)\leq \gdeg(X'_{\red})$ and $Z'_j:=Z_j\sm X_2$ for all $j$.

Recall the cycle $[Y\cap V]\in Z_*(Y)$ equals $\sum_{j=1}^r n_j[Z_j]$ for some  $n_j\geq 0$.
Hence we have an equality of cycles $[Y'\cap V']=\sum_{j=1}^r n_j[Z'_j]\in Z_*(Y')$, therefore
an inequality
\[
|i^*([V'])|\leq\sum_{j=1}^r n_j\gdeg(Z'_j)\leq \sum_{j=1}^r n_j\gdeg(Z_j)+ (\sum_{j=1}^r n_j) \gdeg(X_2).
\]
Using the equality $\sum_{j=1}^r n_j\gdeg(Z_j)=\deg(Y\cap V)=|i^*([V])|$ (compare \rl{elementary}(b)), we conclude that
$(\sum_{j=1}^r n_j)\leq |i^*([V])|$, implying \form{ineq gysin}.

\vskip 4truept

\noindent{\bf Step 4}. Consider Cartesian diagram

\begin{equation*}
\xymatrix{ Y'' \ar[r]^-{i''} \ar[d]_-{q} & X'' \ar[d]^-{p} \\
                 Y' \ar[r]^-{i'} \ar[d] & X' \ar[d] \\
                 Y \ar[r]^-{i} & X,}
\end{equation*}
where $X'' \colonequals \on{Bl}_{Y'}(X')$, and $p \colon X'' \to X'$ is the projection map.

Let $V''\subseteq X'' $ be the strict transform of $V'$. Note that $p$ is proper and that $i$ (resp. $i''$) is a regular immersion of codimension $d$ (resp. $1$). Since $p_*([V''])=[V']$, it follows  by compatibility of Gysin pullback with proper pushforward \cite[Theorem 6.2(a)]{Fu} and excess intersection formula \cite[Theorem 6.3]{Fu}, that  we have an equality
\begin{equation}\label{Eq:uniform gysin}
i^*([V'])=i^*(p_*([V'']))=q_*(i^*([V'']))=q_* \left (c_{d-1}(\C{E}) \cap (i'')^*([V'']) \right ),
\end{equation}
%
where $\C{E}:= q^*(N_Y(X)|_{Y'})/N_{Y''}(X'')$.

\vskip 4truept

\noindent{\bf Step 5.}
Let $\ov{X}\subseteq \B{P}^n_k$ be the closure of $X$, and let  $\ov{X}',\ov{Y}'\subseteq  \B{P}^m_{\ov{X}}$ be the closures of $X'$  and $Y'$, respectively. We set  $\ov{X}'' \colonequals \on{Bl}_{\ov{Y}'}(\ov{X}')$, and let $\ov{Y}''\subseteq\ov{X}''$ be the exceptional divisor.
Then we have a Cartesian diagram

\begin{equation*}
\xymatrix{ Y'' \ar[r]^-{i''} \ar[d] & X'' \ar[d]^-{j} \\
                 \ov{Y}'' \ar[r]^-{\ov{i}''} & \ov{X}'',}
\end{equation*}
where $\ov{i}''$ is a regular embedding of codimension one, and $j$ is an open embedding with dense image. In particular,
we get to the situation of Step 3.
\vskip 4truept

\noindent{\bf Step 6.} We claim that there exist constants $M_1,M_2,M_3$, which only depend on the numerical invariants of $(Y\overset{i}{\hra}X\subseteq\B{P}^n_k, X' \subseteq \B{P}^m_X)$, such that for each closed subvariety $V'\subseteq X'$ such that $i'|_{V'}:i'^{-1}(V')\hra V'$ is not an isomorphism, we have
\[
|i^*([V'])|\leq M_1 |(i'')^*([V''])|\leq M_1M_2 \gdeg(V'')\leq  M_1M_2M_3 \gdeg(V').
\]

Indeed, assume that a diagram $(Y\overset{i}{\hra}X\subseteq\B{P}^n_k, X' \subseteq \B{P}^m_X)$ belongs to a bounded collection. Then it follows from a combination of Lemma \ref{L:simple bounded}(f),(g) and Corollary \ref{C:blowup} that the full Cartesian diagrams of Steps 4 and 5 belong to
bounded collections. In particular, the existence of $M_2$ follows from Steps 3 and 5.

Next, it follows from Corollary \ref{boundedness_normal_cone} that vector bundle $\C{E}$ on $Y'$, defined in Step 4, belongs to a bounded collection. Therefore the existence of $M_1$ follows from equality \form{uniform gysin} together with Proposition \ref{P:chern}, Lemma \ref{L:pullback}(c) and \rco{bounded gdeg}.

Finally, since $V''$ is an irreducible component of $p^{-1}(V')$, we get an inequality $\gdeg(V'')\leq \gdeg(p^{-1}(V')_{\red})$ (by \rl{gdeg}(b)),
so the existence of $M_3$ follows from \rco{bounded gdeg} and Lemma \ref{L:pullback}(a).
\end{proof}

\begin{Cor} \label{C:bound gysin}
Let $X\subseteq \B{P}^n_k$ and $Y\subseteq \B{P}^m_X$ be smooth subvarieties, let $Z\subseteq X$ be a closed reduced subscheme,
and let  $f$ be a composition $Y\overset{i}{\hra} \B{P}^m_X\overset{p}{\surj} X$. Then there exists a constant $M$ depending on the
on $n,m$ and geometric degrees of $X,Y$ and $Z$ such that for every $\al\in A_*(Z)$, the refined pullback $f^*(\al)\in A_*(f^{-1}(Z))$ satisfies
 $|f^*(\al)| \leq M |\al|$.
\end{Cor}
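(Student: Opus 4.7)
The plan is to factor $f$ as $Y\overset{i}{\hra}\B{P}^m_X\overset{p}{\surj} X$ and bound the two refined Gysin pullbacks separately. Via the Segre embedding I regard $\B{P}^m_X$ as a closed subscheme of $\B{P}^{mn+m+n}_k$; since both $Y$ and $\B{P}^m_X$ are smooth over $k$, the closed embedding $i$ is automatically a regular embedding between smooth subschemes of $\B{P}^{mn+m+n}_k$. By \re{gysin}(b) the refined Gysin pullback is compatible with composition, so $f^{*}=i^{*}\circ p^{*}$, and it suffices to bound each factor.

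For the smooth projection $p:\B{P}^m_X\to X$, \rl{pullback}(b) provides a constant $M_1$, depending only on $m$, $n$, $\gdeg(X_{\on{red}})$ and $\gdeg((\B{P}^m_X)_{\on{red}})$, with $|p^{*}(\al)|\leq M_1|\al|$ for every $\al\in A_*(Z)$ (the argument of \rl{pullback}(b) works verbatim for cycles supported on any closed subscheme of $X$, since the preimage of a reduced subscheme under a smooth morphism is reduced). Lemma \ref{degree_product} bounds $\gdeg(\B{P}^m_X)$ in terms of $m$, $n$ and $\gdeg(X)$, so in fact $M_1$ depends only on $m$, $n$ and $\gdeg(X)$.

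For the regular embedding $i$, apply Theorem \ref{uniform_bounded_Gysin_pullback} to the data $(Y\overset{i}{\hra}\B{P}^m_X\subseteq\B{P}^{mn+m+n}_k,\; X'=p^{-1}(Z))$ with the auxiliary parameter of that theorem taken to be $0$, so that $X'$ is viewed as a closed subscheme of $\B{P}^m_X$ itself. Since $Y\times_{\B{P}^m_X}p^{-1}(Z)=f^{-1}(Z)$, this yields a constant $M_2$ with $|i^{*}(\beta)|\leq M_2|\beta|$ for every $\beta\in A_*(p^{-1}(Z))$, depending on the numerical invariants of the diagram. As $p^{-1}(Z)\cong\B{P}^m\times Z$, another application of Lemma \ref{degree_product} bounds $\gdeg(p^{-1}(Z))$ in terms of $m$, $n$ and $\gdeg(Z)$; combined with \rco{bounded gdeg}, this shows that $M_2$ depends only on $n$, $m$, $\gdeg(X)$, $\gdeg(Y)$ and $\gdeg(Z)$.

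Combining, $|f^{*}(\al)|=|i^{*}(p^{*}(\al))|\leq M_2|p^{*}(\al)|\leq M_1M_2|\al|$, so $M:=M_1M_2$ works. There is no substantive obstacle; the only real work is the bookkeeping to confirm that both $M_1$ and $M_2$ depend on the permitted parameters, which reduces to the product-of-degrees estimate (Lemma \ref{degree_product}) and the boundedness criterion \rco{bounded gdeg}.
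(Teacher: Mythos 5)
Your proposal follows essentially the same route as the paper: factor $f^* = i^* \circ p^*$ by \re{gysin}(b), bound the flat pullback $p^*$ via \rl{pullback}(b), and bound the regular-embedding pullback $i^*$ via Theorem \ref{uniform_bounded_Gysin_pullback}, then verify that the constants depend only on $n$, $m$, $\gdeg(X)$, $\gdeg(Y)$, $\gdeg(Z)$. The paper compresses the bookkeeping into a single citation to \rl{pullback}(b), the proof of \rco{gdeg mor}, and Theorem \ref{uniform_bounded_Gysin_pullback}, whereas you spell out the same bookkeeping with Lemma \ref{degree_product} and \rco{bounded gdeg}; these do the same job. Two small points worth keeping in mind: Lemma \ref{degree_product} as stated treats a single closed subvariety in $\B{P}^m_k\times\B{P}^n_k$, so for $Z$ reduced (possibly reducible, possibly only locally closed in $\B{P}^n_k$) you should sum over irreducible components and use the locally closed definition of $\gdeg$ from \re{geom deg}(b) before concluding that $\gdeg(p^{-1}(Z))$ is controlled; and the remark that the argument of \rl{pullback}(b) applies verbatim to cycles supported on $Z$ (rather than all of $X$) is exactly what is needed and is correct since $p$ is smooth, hence flat, and preimages of subvarieties are reduced. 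Neither of these affects the soundness of the argument, which matches the paper's.
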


\begin{proof}
Since $f^*=i^*\circ p^*$, the assertion follows from a combination of \rl{pullback}(b), (the proof of) \rco{gdeg mor} and Theorem \ref{uniform_bounded_Gysin_pullback}.
\end{proof}

\begin{Cor}\label{behaviour_norm_intersection}
Let $X\subseteq\B{P}_k^n$ be a smooth closed subscheme. Then there exists a constant $M$,
 depending on $n$ and the geometric degree of $X$, such that
for all $\al,\beta\in A_*(X)$, we have an inequality $|\al\cap\beta| \leq M |\al| |\beta|$.
\end{Cor}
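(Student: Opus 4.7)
The plan is to express $\al\cap\beta$ as a refined Gysin pullback along the diagonal and then invoke Theorem \ref{uniform_bounded_Gysin_pullback}. Set $d:=\dim(X)\leq n$. Since $X$ is smooth, the diagonal $\Dt:X\hra X\times X$ is a closed regular embedding of codimension $d$, and by the defining property of the intersection pairing (see \re{ref inters}(a)) one has $\al\cap\beta=\Dt^*(\al\times\beta)$, where $\al\times\beta\in A_*(X\times X)$ is the exterior product of cycle classes. So the problem reduces to two sub-tasks: bound $|\al\times\beta|$ in terms of $|\al||\beta|$, and then apply a uniform bound on the Gysin pullback $\Dt^*$.

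For the first sub-task, I would embed $X\times X\hra\B{P}^{n^2+2n}_k$ via Segre. For irreducible closed subvarieties $V,W\subseteq X$, the product $V\times W\subseteq X\times X$ is irreducible and by Lemma \ref{degree_product} has Segre degree $\binom{\dim V+\dim W}{\dim V}\deg V\deg W\leq\binom{2n}{n}\deg V\deg W$. Choosing optimal representatives $\al=\sum_i m_i[V_i]$ and $\beta=\sum_j n_j[W_j]$ and using bilinearity of the exterior product on cycles, one immediately obtains
\[
|\al\times\beta|\leq\binom{2n}{n}|\al||\beta|.
\]

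For the second sub-task, I would apply Theorem \ref{uniform_bounded_Gysin_pullback} to the diagram $X\overset{\Dt}{\hra}X\times X\subseteq\B{P}^{n^2+2n}_k$ in the degenerate case $X'=X\times X$ (i.e.\ $m=0$ in the notation of that theorem). The numerical invariants of this diagram are bounded in terms of $n$ and $\deg(X)$: Lemma \ref{degree_product} gives $\deg(X\times X)\leq\binom{2n}{n}(\deg X)^2$ under Segre, while $\deg(\Dt(X))=2^d\deg(X)$ in $\B{P}^{n^2+2n}_k$ since $\Dt$ composed with Segre pulls $\C{O}(1)$ back to $\C{O}_X(2)$. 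The theorem then yields a constant $M_1=M_1(n,\deg X)$ with $|\Dt^*(\gm)|'\leq M_1|\gm|$ for every $\gm\in A_*(X\times X)$, where $|\cdot|'$ denotes the norm on $A_*(X)$ computed with respect to the composite embedding $X\hra\B{P}^{n^2+2n}_k$.

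Finally, Corollary \ref{norms_different_embeddings} supplies a constant $M_2=M_2(n,\deg X)$ comparing $|\cdot|$ and $|\cdot|'$ on $A_*(X)$, so that $M:=M_1M_2\binom{2n}{n}$ works. There is no genuine obstacle: the proof is a direct assembly of previously established tools. The only technical point is the bookkeeping of which projective embedding defines the norm on $A_*(X)$ at each step, which is precisely what Corollary \ref{norms_different_embeddings} resolves.
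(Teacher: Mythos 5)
Your proposal is correct and follows essentially the same route as the paper: write $\al\cap\beta=\Dt^*(\al\times\beta)$, bound $|\al\times\beta|$ via Lemma \ref{degree_product} after reducing to prime cycles by bilinearity, apply Theorem \ref{uniform_bounded_Gysin_pullback} to the regular embedding $\Dt:X\hra X\times X$, and reconcile the two projective embeddings of $X$ via Corollary \ref{norms_different_embeddings}. The only difference is that you make the constants and the degree of $\Dt(X)$ explicit, which the paper leaves implicit.
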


\begin{proof}
Let $\Delta:X\hra X\times X\subseteq \B{P}_k^n\times\B{P}_k^n$ be the diagonal embedding. By definition, we have $\al\cap\beta=\Delta^*(\al\times \beta)$. Thus using Theorem \ref{uniform_bounded_Gysin_pullback} and Corollary \ref{norms_different_embeddings},
it suffices to show inequality $|\al\times\beta|\leq M|\al||\beta|$, where $M$ depends only on $n$ and the degree of $X$.
As before,~using Lemma \ref{L:elementary}(a) we may assume that $\al=[V]$ and $\beta=[W]$ for some closed subvarieties
$V,W\subseteq X$. The result now follows from Lemma \ref{degree_product}.
\end{proof}

We need the following estimate for the norm of a composition of correspondences.

\begin{Emp} \label{E:ring}
{\bf Composition of correspondences.} (a) Let $X_1$, $X_2$ and $X_3$ be $d$-dimensional smooth proper schemes over $k$.
Then to every $u \in A_d(X_1 \times X_2)$ and $v \in A_d(X_2 \times X_3)$ one associates the composition
$v \circ u:= p_{13*}(p_{12}^*(u) \cap p_{23}^*(v))\in A_d(X_1 \times X_3)$. Here as usual $p_{ij}$ is the projection from $X_1 \times X_2 \times X_3$ to $X_i \times X_j$.

(b) In particular, when $X_1=X_2=X_3$, the above composition equips $A_d(X \times X)$ with the structure of a ring with the graph of identity as the unit. Furthermore, the assignment $\al\mapsto H^i(\al)$ defines an action of $A_d(X \times X)$ on $H^i(X,\ql)$
for all $\ell\neq\on{char}(k)$ and all $i\in\B{N}$.
\end{Emp}

\begin{Cor}\label{Hrushovski_Norm_Correspondence}(\cite[Lemma 10.17(3)]{Hr})
For every $n,\dt\in\B{N}$, there exists a constant $N$, depending on $(n,\dt)$ such that for all $d\leq n$,

$\bullet$ all triples $d$-dimensional smooth closed subvarieties $X_1,X_2,X_3 \subseteq \B{P}^n_k$ (over an arbitrary algebraically closed field $k$) of degree $\dt$;

$\bullet$ and all pairs $u\in A_d(X_1 \times X_2)$ and $v\in A_d(X_2 \times X_3)$,

~we have inequality
\begin{equation}\label{modified_norm_0}
|v\circ u| \leq N |v|\cdot |u|.
\end{equation}
Here the norms are taken with respect to the Segre embedding of the product varieties.
\end{Cor}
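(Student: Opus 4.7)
The plan is to estimate each of the three operations entering the definition $v\circ u=p_{13*}\bigl(p_{12}^*(u)\cap p_{23}^*(v)\bigr)$ separately and then compose the estimates. Write $W:=X_1\times X_2\times X_3$ and $W_{ij}:=X_i\times X_j$, all embedded into projective space through the iterated Segre embedding. Since the $X_i$ are smooth, $W$ and each $W_{ij}$ are smooth closed subvarieties of projective space; moreover, by Lemma \ref{degree_product} their degrees are bounded by a function of $(n,\dt)$ only (using $d\leq n$). In particular, the three projections $p_{ij}\colon W\to W_{ij}$ are smooth projective morphisms whose fibre is one of the $X_k$'s.

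First I would bound the flat pullbacks. The inclusion $X_k\subseteq\B{P}^n$ induces a closed embedding $W\hookrightarrow\B{P}^n_{W_{ij}}$ through which $W\to W_{ij}$ takes the form required by \rl{pullback}(b); this yields a constant $M_1=M_1(n,\dt)$ with $|p_{12}^*(u)|\leq M_1|u|$ and $|p_{23}^*(v)|\leq M_1|v|$. Strictly speaking, the norm on $W_{ij}$ that enters \rl{pullback} is read off from the ambient $\B{P}^n_k$'s of the $X_k$, not from the Segre embedding $W_{ij}\subseteq\B{P}^{n^2+2n}$ used to define $|u|$ and $|v|$, but this discrepancy is absorbed by Corollary \ref{norms_different_embeddings} at the cost of yet another constant depending on $(n,\dt)$.

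Next I would bound the intersection product and the pushforward. Since $W$ is smooth with bounded geometric degree, Corollary \ref{behaviour_norm_intersection} provides a constant $M_2=M_2(n,\dt)$ satisfying
\[
|p_{12}^*(u)\cap p_{23}^*(v)|\leq M_2\,|p_{12}^*(u)|\cdot|p_{23}^*(v)|.
\]
For the pushforward, the map $p_{13}\colon W\to W_{13}$ factors as $W\hookrightarrow\B{P}^n_{W_{13}}\surj W_{13}$ with $W_{13}$ closed in projective space, so the ``moreover'' clause of \rl{pullback}(c) yields $|p_{13*}(\gamma)|\leq|\gamma|$ for every $\gamma\in A_*(W)$. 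Concatenating these three estimates gives $|v\circ u|\leq M_1^2\,M_2\,|u|\,|v|$, and one takes $N:=M_1^2M_2$.

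The only real difficulty is the bookkeeping of which ambient projective space carries which norm: the norms on $u$, $v$, and $v\circ u$ are taken with respect to the Segre embeddings of the various $W_{ij}$, whereas the intermediate classes $p_{ij}^*(\cdot)$ and $p_{12}^*(u)\cap p_{23}^*(v)$ live naturally on $W$ with its own (iterated) Segre embedding. Once Corollary \ref{norms_different_embeddings} is used to pass between these embeddings at a uniform cost, the argument becomes a routine concatenation of the boundedness theorems already established in this appendix, and no additional geometric input is needed.
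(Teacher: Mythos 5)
Your proposal is correct and is in essence the argument the paper gives: the paper's proof is the one-line remark that the estimate follows by combining Corollary \ref{behaviour_norm_intersection} with Lemma \ref{L:pullback}, and your write-up simply makes explicit the three estimates (two flat pullbacks via \rl{pullback}(b), one intersection via \rco{behaviour_norm_intersection}, one projective pushforward via \rl{pullback}(c)) together with the comparison of ambient embeddings via \rco{norms_different_embeddings} and the degree control from \rl{degree_product}. No gap, no deviation in strategy.
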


\begin{proof}
Using definition of $v\circ u$ (see \re{ring}(a)), the assertion follows from a combination of Corollary \ref{behaviour_norm_intersection} with
\rl{pullback}.
\end{proof}

\begin{Def}\label{modified_norm}
(a) For  $n,\dt\in\B{N}$, we denote by $N(n,\dt)$ the smallest possible integer $N$ satisfying the assertion of Corollary \ref{Hrushovski_Norm_Correspondence}.

(b) For $d\leq n$, a pair $X_1,X_2\subseteq\B{P}^n_k$ of $d$-dimensional smooth closed subvarieties of degree $\dt$ and a class $\al\in A_d(X_1 \times X_2)$, we define a {\em renormalized norm} of $||\al||$ of $\al$ by the formula $||\al|| \colonequals N(n,\dt)|\al|$.
\end{Def}

%

\begin{Emp} \label{reform}
{\bf Remark.} In the notation of Definition \ref{modified_norm}, the inequality (\ref{modified_norm_0}) of Corollary \ref{Hrushovski_Norm_Correspondence} can now be rewritten in the form
\begin{equation}\label{Eq:mult}
||v\circ u|| \leq ||v||\cdot ||u||.
\end{equation}
\end{Emp}


\subsection{A bound of the spectral radius}
In this section we apply the submultiplicativity of the renormalized norm \form{mult}
to obtain a bound a spectral radius of an action of a cycle on a cohomology. First we treat of the case of varieties over finite fields,
following \cite{Hr} very closely and using Katz-Messing \cite{KM} (see \cite[Theorem 1.4]{Shu19b} for a similar argument). Then we deduce the general case by standard spreading out argument.



\begin{Emp} \label{E:purity2}
{\bf Spectral radius.}
Let $X$ be a smooth proper variety of dimension $d$ over an algebraically closed field $k$ and let $\al\in A_d(X\times X)$.
For every $\ell\neq\on{char}(k)$ and a field embedding $\tau:\ql\hra\B{C}$ one associates an endomorphism $\tau(H^i(\al))$ of the $\B{C}$-vector space $H^i(X,\ql)\otimes_{\ql}\B{C}$. Hence one can consider its spectral radius $\rho(\tau(H^i(\al))\in\B{R}$, that is, maximum of norms of its eigenvalues.

Using the identity $\rho(A)=\limsup_n |\Tr(A^n)|^{\frac{1}{n}}$, it follows from \re{purity} that the spectral radius $\rho(H^i(\al)):=\rho(\tau(H^i(\al))$ is independent of $\ell\neq\on{char}(k)$ and $\tau$.
\end{Emp}

\begin{Prop} \label{P:spectral}(\cite[Proposition 11.11]{Hr})
Let $k$ be an algebraically closed field, let $X\subseteq\B{P}^n_k$ be a smooth closed subvariety over $k$ of dimension $d$, let
$\al\in A_{d}(X \times X)$, and let $i\in\B{N}$. Then the spectral radius of $H^{i}(\al)$ (see \re{purity2}) is bounded above by the $||\al||$ (see Definition \ref{modified_norm}).
\end{Prop}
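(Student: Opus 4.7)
\emph{The plan.} Reduce to the case where $k=\overline{\F_q}$ with $X$ and $\al$ defined over $\F_q$, then combine the submultiplicativity \form{mult} of the renormalized norm with Deligne's purity and a Cesaro/Kronecker extraction. For the reduction, I would spread out $(X\subseteq\P^n_k,\al)$ to a scheme of finite type over a finitely generated subring of $k$ and specialize to a closed geometric fiber, obtaining $(X_0/\F_q,\al_0\in A_d(X_0\times X_0))$ whose base change to $k$ recovers $(X,\al)$ --- the same sort of reduction as in the ``general case'' remark of \re{purity}. By smooth proper base change for $\ell$-adic cohomology and Katz--Messing, the characteristic polynomial of $H^i(\al)$, and hence its spectral radius, is preserved by this specialization; the norm $||\al||$ is also preserved as it only depends on the projective embedding. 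Thus we may assume $X$ and $\al$ are defined over $\F_q$.

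\emph{Main trace estimate.} Because $\al$ is defined over $\F_q$, the operator $T:=H^i(\al)$ commutes with $F:=F^*_{X,q}$ on $H^i(X,\ql)$. Picking a simultaneous upper-triangularization with paired diagonal entries $(\alpha_j,\beta_j)_{j=1}^D$ gives $\Tr(F^mT^n)=\sum_j\alpha_j^m\beta_j^n$ for all $m,n\geq 0$. Iterating \form{mult} yields $||\al^{\circ n}||\leq||\al||^n$, hence $|\al^{\circ n}|\leq||\al||^n/N(n,\dt)$. Applying the uniform trace bound of \re{spectral} to $\al^{\circ n}\in A_d(X\times X^{(q^m)})$ --- valid since $X^{(q^m)}=X$, and using $H^i(\al^{\circ n})=T^n$ from \re{ring}(b) --- produces a constant $C=C(n,\dt)$ independent of $m$ and $q$ such that
\[
\Bigl|\sum_{j=1}^D\alpha_j^m\beta_j^n\Bigr|\leq C\,||\al||^n\,q^{mi/2}\quad\text{for all }m,n\geq 0.
\]
By Deligne's purity $|\alpha_j|=q^{i/2}$, so setting $\gamma_j:=\alpha_j/q^{i/2}$ on the unit circle we have $|\sum_j\gamma_j^m\beta_j^n|\leq C\,||\al||^n$ uniformly in $m$.

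\emph{Extraction and conclusion.} Grouping the indices $j$ by the distinct values $\gamma^{(1)},\ldots,\gamma^{(s)}$ of $\gamma_j$ into subsets $S_k$, and applying Cesaro averages $\tfrac{1}{M}\sum_{m=1}^M\overline{\gamma^{(k)}}^m$ --- which kill every term with $\gamma_j\neq\gamma^{(k)}$ in the $M\to\infty$ limit --- I extract $|\sum_{j\in S_k}\beta_j^n|\leq C\,||\al||^n$ for each $k$ and every $n\geq 0$. A standard Dirichlet/Kronecker argument now forces $\max_{j\in S_k}|\beta_j|\leq||\al||$: if $R:=\max_{j\in S_k}|\beta_j|>||\al||$, then simultaneous Diophantine approximation to $1$ among the unit-modulus numbers $\{\beta_j/R:j\in S_k,\,|\beta_j|=R\}$ would make $R^{-n}\sum_{j\in S_k}\beta_j^n$ exceed a fixed positive constant along a subsequence of $n$'s, contradicting the bound after dividing by $R^n$ (since $||\al||/R<1$). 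Taking the maximum over $k$ delivers $\rho(T)\leq||\al||$. The main obstacle is the spreading-out step, especially verifying that the commutation $FT=TF$ passes cleanly through specialization and that $||\al||$ really matches the norm of a spread-out class; the Cesaro/Kronecker extraction is elementary but must be carried out with some care.
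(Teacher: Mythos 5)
Your reduction to $k=\ov{\B{F}_q}$ via spreading out is sound in outline and matches the paper's general-case step (which uses \rcl{indep} and flatness to see that both the characteristic polynomial and the norm specialize correctly). However, your finite-field argument is circular: you invoke ``the uniform trace bound of \re{spectral}'' for the classes $\al^{\circ n}$, but \re{spectral} is a restatement of Corollary~\ref{C:bound trace}, which is proved in Appendix~B from Corollary~\ref{C:spectral_twisted}, which in turn is a direct consequence of the very Proposition~\ref{P:spectral} you are trying to prove. The inequality $|\sum_j\alpha_j^m\beta_j^n|\leq C\,||\al||^n q^{mi/2}$ that launches your Cesaro/Kronecker extraction therefore has no non-circular source at this point. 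What is actually available at this stage of the paper is only the alternating Lefschetz formula \form{traces}, the norm submultiplicativity of Corollaries~\ref{behaviour_norm_intersection} and~\ref{Hrushovski_Norm_Correspondence}, and Deligne's purity --- none of which by itself isolates the trace on a single cohomological degree $i$.

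The paper's proof supplies exactly the missing ingredient via Katz--Messing: there is a polynomial $P_i\in\B{Q}[T]$ such that $P_i(F_X^*)$ acts as the identity on $H^i$ and as zero on $H^j$ for $j\neq i$. Inserting the class $(mP_i)([\Gm_X])$ into the Lefschetz formula turns the alternating sum into $\Tr(H^i(\al)^n)$ alone, and the norm estimates then give $|\Tr(T^n)|\leq M'||\al||^n$ directly for some constant $M'$ independent of $n$, from which $\rho(T)=\limsup_n|\Tr(T^n)|^{1/n}\leq||\al||$. This avoids the commutation $FT=TF$, the simultaneous triangularization, and the Kronecker step entirely. To make your structure work you would need to import the Katz--Messing projector (or an equivalent device for isolating $H^i$) as a prior ingredient rather than invoking the downstream Corollary.
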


\begin{proof}
{\bf Finite field case.} First we are going to show the assertion in the case when $k$ is an algebraic closure of a finite field $\F_q$. Replacing $q$ by its power, we can assume that $X$  is defined over $\F_q$. Let $\on{F}_{X} \colon X \to X$ be the (geometric) Frobenius with respect to $\F_q$.
By \cite[Theorem 2]{KM},~there exists a polynomial $P_{i}[T] \in \B{Q}[T]$ such that the endomorphism $P_{i}(\on{F}_X^*)$ of $H^j(X,~\ql)$ is zero for $j \neq i$ and identity for $j=i$. We choose $m\in\B{N}$ such that $(mP_i)(T)\in\B{Z}[T]$.

Let $\al^{\circ n} \in A_{d}(X \times X)$ be the $n$-th power of $\al$ in the ring  $A_{d}(X \times X)$ (see \re{ring}),
let $[\Gm_X]\in A_{d}(X \times X)$ be the class of the graph of $F_X$, and let $(mP_i)([\Gm_X])\in A_{d}(X \times X)$
be the element obtained from $[\Gm_X]$ using the ring structure on $A_{d}(X \times X)$.

By repeated application of equation \form{traces},~we get inequality
\begin{equation}\label{trace_norm}
\on{Tr} \left( H^i(\al)^{n}, H^{i}(X,\ql)\right)= \frac{(-1)^{i}}{m}  \al^{\circ n} \cdot (mP_{i})([\Gm_X])\in\B{Q}.
\end{equation}

Hence, by Corollaries \ref{behaviour_norm_intersection} and \ref{Hrushovski_Norm_Correspondence} (in the form of (\ref{Eq:mult}), there exists a constant $M$, such that for all
$n\in\B{N}$ we have an inequality
\begin{equation}\label{trace_norm_1}
|\on{Tr} \left( H^i(\al)^{n},H^i(X,\ql) \right)|\leq   M |(mP_{i})([\Gm_X])|\cdot ||\al||^n.
\end{equation}
Since the spectral radius of $H^{i}(\al)$ equals $\limsup_n |\on{Tr} \left( H^i(\al)^{n},H^{i}(X,\ql)\right)|^{\frac{1}{n}}$,
the proposition in this case follows from inequality (\ref{trace_norm_1}).

\vskip 4truept
{\bf General case.} We claim that the general case follows from the finite field case, shown above.
Indeed, fix a prime $\ell$, invertible in $k$. By \rl{elementary}(a), we can assume that $\al$ is the class $[C]$ of a closed irreducible subvariety $C\subseteq X\times X$. We can assume that $k$ is a geometric generic point of a scheme $S$  of finite type over $\B{Z}$, that
that $X$ is a geometric generic fiber of a closed subscheme $\wt{X}\subseteq\B{P}^n_S$ and $C$ is the geometric generic fiber of a closed subscheme $\wt{C}\subseteq\wt{X}\times_S\wt{X}$. Moreover, shrinking $S$, if necessary, we can assume that $S$ is connected, $\ell$ is invertible in $S$, $\wt{X}$ is smooth over $S$ and $\wt{C}$ is flat over $S$.

For every geometric point $\bar{s}$ of $S$ we denote by $\wt{X}_{\bar{s}}$ and $\wt{C}_{\bar{s}}$ the fibers of $\wt{X}$ and $\wt{C}$ over $\bar{s}$, respectively. Since every closed point of $S$ is defined over a finite field, to show the reduction, it suffices to show that the modified norm $||[\wt{C}_{\bar{s}}]||$ and the spectral radius of the action of $H^{i}([\wt{C}_{\bar{s}}])$ on $H^{i}(\wt{X}_{\bar{s}},\ql)$ do not depend on $\bar{s}$. The assertion about $||[\wt{C}_{\bar{s}}]||$ follows from equality $|[\wt{C}_{\bar{s}}]|=\deg (\wt{C}_{\bar{s}})$ (Lemma \ref{L:elementary}(b)) and
observation that $\deg (\wt{C}_{\bar{s}})$ and $\deg (\wt{X}_{\bar{s}})$ do not depend on $\bar{s}$, because $\wt{X}$ and $\wt{C}$ are flat over $S$. The assertion about the spectral radius follows from \rcl{indep} below.
\end{proof}

\begin{Cl} \label{C:indep}
The characteristic polynomial of $H^i([\wt{C}_{\bar{s}}])$ is independent of $\bar{s}$.
\end{Cl}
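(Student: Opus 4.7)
The plan is to realize the family of endomorphisms $\{H^i([\wt{C}_{\bar{s}}])\}_{\bar{s}}$ as the fibers of a single endomorphism of a lisse $\ql$-sheaf on the connected base $S$, and then conclude that the characteristic polynomial is locally constant, hence constant.

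Denote by $\pi:\wt{X}\to S$ the structural morphism and by $\pi_2=\pi\times_S\pi:\wt{X}\times_S\wt{X}\to S$. Since $\pi$ is smooth and projective (after further shrinking $S$ we may assume $\wt{X}\hra\B{P}^n_S$ is projective rather than merely quasi-projective; this does not affect the cohomology considered), the smooth and proper base change theorems imply that $\sH^i:=R^i\pi_*\ql$ is a lisse $\ql$-sheaf on $S$, with canonical isomorphism $\sH^i_{\bar{s}}\cong H^i(\wt{X}_{\bar{s}},\ql)$ at every geometric point $\bar{s}$. Moreover, the relative Künneth formula gives
\[
R^{2d}(\pi_2)_*\ql(d)=\bigoplus_{i+j=2d}\sH^i\otimes\sH^j(d),
\]
and relative Poincaré duality identifies $\sH^{2d-i}(d)$ with the dual $(\sH^i)^{\vee}$; thus one of the Künneth summands equals $\mathcal{E}nd(\sH^i)$.

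Since $\wt{C}\to S$ is flat of relative dimension $d$ (its general fiber is the base change of $C\subseteq X\times X$, of dimension $d$, and flat base change preserves dimension), the relative cycle class of \cite{Fu} produces a global class $[\wt{C}]_S\in H^{2d}(\wt{X}\times_S\wt{X},\ql(d))$ whose image in $H^0(S,R^{2d}(\pi_2)_*\ql(d))$ has the property that its pullback to each geometric point $\bar{s}$ of $S$ is the absolute cycle class of $\wt{C}_{\bar{s}}$ (this compatibility is standard and uses only that $\wt{C}$ is flat over $S$). Projecting onto the summand $\mathcal{E}nd(\sH^i)$ and taking global sections yields an endomorphism $\Phi^i\in\End_S(\sH^i)$. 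By construction together with the description of $H^i(-)$ recalled in \re{end}, the fiber $\Phi^i_{\bar{s}}$ is precisely the operator $H^i([\wt{C}_{\bar{s}}])$ on $H^i(\wt{X}_{\bar{s}},\ql)$.

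Finally, an endomorphism of a lisse $\ql$-sheaf on $S$ has locally constant characteristic polynomial: at a geometric base point $\bar{s}_0$, $\sH^i$ corresponds to a continuous representation $\rho$ of $\pi_1(S,\bar{s}_0)$, and $\Phi^i$ corresponds to an endomorphism commuting with $\rho$; for any other geometric point $\bar{s}$, an \'etale path from $\bar{s}_0$ to $\bar{s}$ identifies $\Phi^i_{\bar{s}}$ with $\Phi^i_{\bar{s}_0}$ up to conjugation, and characteristic polynomials are conjugation invariant. Since $S$ is connected, this shows the characteristic polynomial of $\Phi^i_{\bar{s}}=H^i([\wt{C}_{\bar{s}}])$ is independent of $\bar{s}$. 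The main technical ingredient is the compatibility of the relative cycle class map with base change on the flat family $\wt{C}\to S$, which supplies the endomorphism $\Phi^i$ with the correct fibers; everything else is standard application of smooth proper base change and relative Poincar\'e duality.
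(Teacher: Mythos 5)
Your proposal is correct and follows the same basic strategy as the paper: upgrade the fiberwise operator $H^i([\wt{C}_{\bar{s}}])$ to a single endomorphism of the lisse sheaf $R^ip_*\ql$ on $S$, and conclude by connectedness of $S$. The differences are in the two technical steps. To build the global endomorphism, you pass through the relative cycle class of $\wt{C}$, the relative K\"unneth decomposition and relative Poincar\'e duality, projecting onto the $\mathcal{E}nd(\sH^i)$ summand; the paper instead mimics \re{end} directly at the sheaf level, defining $\C{H}^i(\wt{C})$ as the composite $R^ip_*\ql\xrightarrow{(c_1)^*}R^i\pi_*\ql\xrightarrow{(c_2)_*}R^ip_*\ql$ (where $\pi$ is the projection $\wt{C}\to S$), which is more elementary and avoids invoking the cycle class map in \'etale cohomology and its base-change compatibility. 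For the final step, you use that $\sH^i$ is a monodromy representation of $\pi_1(S)$ and $\Phi^i$ an equivariant endomorphism, so characteristic polynomials of fibers agree up to conjugation; the paper instead shows each $\Tr((\C{H}^i)^n_{\bar{s}})$ is the fiber of a global section $a(n)\in H^0(S,\ql)$, which is constant since $S$ is connected, and then recovers the characteristic polynomial from the traces of powers. Both routes are valid; the paper's has the virtue of not needing the cycle class formalism (a minor point: your citation of \cite{Fu} for the $\ell$-adic relative cycle class is not quite right, since Fulton's book treats Chow groups rather than \'etale cohomology), while yours gives the characteristic polynomial directly rather than via traces of all powers.
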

\begin{proof}
It suffices to show that for every $n\in\B{N}$ the trace $\Tr(H^i([\wt{C}_{\bar{s}}])^n, H^i(\wt{X}_{\bar{s}},\ql))$ is independent of $\bar{s}$.
To see this it suffices to construct a global section $a(n)\in H^0(S,\ql)$ such that for every geometric point
$\bar{s}$ of $S$, the pullback $a(n)_{\bar{s}}\in H^0(\bar{s},\ql)=\ql$ of $a(n)$ equals $\Tr(H^i([\wt{C}_{\bar{s}}])^n, H^i(\wt{X}_{\bar{s}},\ql))$.
Indeed, the independence of $\bar{s}$ then would follow from the fact that $S$ is connected, thus $a(n)$ is constant.

The construction of $a(n)$ is standard: Since the projection $p:\wt{X}\to S$ is proper and smooth, the derived push-forward
$R^ip_*(\ql)$ is a local system on $S$, and we have a natural isomorphism  $R^ip_*(\ql)_{\bar{s}}\simeq H^i(\wt{X}_{\bar{s}},\ql)$
for all $\bar{s}$. It therefore suffices to construct an endomorphism $\C{H}^i(\wt{C}):R^ip_*(\ql)\to R^ip_*(\ql)$, whose geometric fiber
at $\bar{s}$ is $H^i(\wt{C}_{\bar{s}})$.

Let $\pi:\wt{C}\to S$ be the projection, and $(c_1,c_2):\wt{C}\hra \wt{X}\times\wt{X}$ be the inclusion. Then we define
$\C{H}^i(\wt{C})$ to be the composition $R^ip_*(\ql)\overset{(c_1)^*}{\lra} R^i\pi_*(\ql)\overset{(c_2)_*}{\lra} R^ip_*(\ql)$,
where $(c_1)^*$ is induced morphism $Rp_*(\ql)\to R(p\circ c_1)_*(\ql)=R\pi_*(\ql)$, while $(c_2)_*$ is defined by a manner
similar to \re{end}, using the fact that $p$ is smooth, while $\pi$ is flat.
\end{proof}

\begin{Cor}\label{C:spectral_twisted}(\cite[Corollary 11.12]{Hr})
Let $k$ be an algebraically closed field of characteristic $p>0$, $q$ a power of $p$, $X\subseteq\B{P}^n_k$ a smooth closed subvariety over $k$ of dimension $d$. Then for every $\al\in A_{d}(X \times X^{(q)})$ and $i\in\B{N}$ the spectral radius of the endomorphism $F^{*}_{X,q} \circ H^{i}(\al)$ of $H^{i}(X,\ql)$ (see \re{purity}(b)) is bounded above by $q^{\frac{i}{2}}||\al||$.
\end{Cor}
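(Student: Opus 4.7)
The plan is to reduce to the case where $X$ is defined over $\fq$ (so that $X^{(q)}=X$ canonically and $F_{X,q}$ is the usual geometric Frobenius), then combine Deligne's purity with an iteration on powers of $A:=F_{X,q}^*\circ H^i(\al)$. Following the ``general case'' strategy in the proof of Proposition \ref{P:spectral}, I would spread out $X\subseteq\B{P}^n_k$ and $\al$ over a finite-type scheme $S$ over $\B{Z}[1/\ell]$, and invoke the evident analogue of Claim \ref{C:indep} (with an extra $F^*$ factor) to show that the characteristic polynomial of $F_{X,q}^*\circ H^i(\al)$ is locally constant on $S$; specialization to a closed point of $S$ with finite residue field then reduces to the finite-field setup.

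In the finite-field case, the key identity is the Frobenius covariance $H^i(\al)\circ F_{X,q}^*=F_{X,q}^*\circ H^i(\al^{\si_q})$, where $\al^{\si_q}$ denotes the Galois conjugate of $\al$. Iterating yields $A^n=F_{X,q^n}^*\circ H^i(\al_n)$ with $\al_n:=\al\circ\al^{\si_q^{-1}}\circ\cdots\circ\al^{\si_q^{-(n-1)}}\in A_d(X\times X)$. Since Frobenius conjugation preserves the geometric degree, $\|\al^{\si_q^k}\|=\|\al\|$ for all $k$, and submultiplicativity of the renormalized norm (Corollary \ref{Hrushovski_Norm_Correspondence}) gives $\|\al_n\|\leq\|\al\|^n$. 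By Deligne's purity together with Frobenius semisimplicity on $H^i$, for a fixed embedding $\tau:\qlbar\hookrightarrow\B{C}$ there exists a Hermitian inner product on $H^i(X,\qlbar)\otimes_\tau\B{C}$ in which $F_{X,q^n}^*|_{H^i}$ is normal with operator norm exactly $q^{ni/2}$. Finally, finite-dimensionality of $H^i(X,\ql)$ together with the linearity of $\beta\mapsto H^i(\beta)$ yields a constant $C$, depending only on $X$, such that $\|H^i(\beta)\|_{\on{op}}\leq C\|\beta\|$ for all $\beta\in A_d(X\times X)$.

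Combining these inputs via submultiplicativity of the operator norm,
\[
\rho(A)^n=\rho(A^n)\leq\|A^n\|_{\on{op}}\leq q^{ni/2}\,\|H^i(\al_n)\|_{\on{op}}\leq q^{ni/2}\,C\,\|\al\|^n,
\]
so $\rho(A)\leq q^{i/2}\,C^{1/n}\,\|\al\|\to q^{i/2}\|\al\|$ as $n\to\infty$, yielding the claimed bound.

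The main obstacle is the clean justification of the Frobenius covariance identity at the cycle level, which requires carefully relating the Galois action on cycles over $\fqbar$ to the geometric Frobenius on cohomology. A cleaner alternative more in the spirit of the proof of Proposition \ref{P:spectral} would be to compute $\on{Tr}(A^n|_{H^i(X,\ql)})$ directly using a Katz--Messing polynomial $P_i(F^*)$ to isolate $H^i$: one then expresses the trace as an intersection number $\al_n\cdot Q_{n,i}$ for a suitable Frobenius-twisted cycle $Q_{n,i}$, bounds it by a combination of submultiplicativity and Deligne's purity on the cohomological level, and takes $n$-th roots in $\rho(A)=\limsup_n|\on{Tr}(A^n|_{H^i(X,\ql)})|^{1/n}$ to conclude.
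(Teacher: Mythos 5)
Your main approach has a genuine gap at the step where you assert that ``finite-dimensionality of $H^i(X,\ql)$ together with the linearity of $\beta\mapsto H^i(\beta)$ yields a constant $C$ \ldots such that $\|H^i(\beta)\|_{\on{op}}\leq C\|\beta\|$.'' This does not follow from finite-dimensionality: for a fixed bound on $\gdeg(V)$ there are infinitely many irreducible $d$-cycles $V\subseteq X\times X$, and it is not automatic that the operators $H^i([V])$ lie in a bounded subset of $\End(H^i(X,\ql))$. In fact, controlling the {\em spectral radius} of $H^i(\beta)$ by $\|\beta\|$ is exactly the content of Proposition \ref{P:spectral}, and the spectral radius can be much smaller than the operator norm; upgrading to a bound on $\|H^i(\beta)\|_{\on{op}}$ would require a separate argument (roughly, finiteness of homological equivalence classes of bounded degree, which is not elementary). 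A second, smaller problem is the appeal to ``Frobenius semisimplicity'' on $H^i$ of smooth projective varieties over finite fields --- this is an open conjecture. You can circumvent it for your operator-norm estimate on $F^*$ by taking a basis in which the Jordan form has arbitrarily small off-diagonal entries and then letting the resulting $\epsilon\to 0$; but the constant $C$ in your $\|H^i(\cdot)\|_{\on{op}}$ bound would then depend on that basis, so you would have to check carefully that nothing deteriorates in the limit.

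The paper sidesteps both issues with a commutativity trick that only ever needs bounds on spectral radii, never operator norms. After reducing to the case that $X$ and a representing cycle $C$ of $\al$ are defined over $\F_{q^r}$ (over $\F_{q^r}$ rather than $\F_q$ so that both $X$ and $C$ descend), one checks that
\[
\bigl(F^*_{X/k,q}\circ H^i([C])\bigr)^{r}=F_X^*\circ H^i\bigl([C^{(q^{r-1})}]\circ\cdots\circ[C^{(q)}]\circ[C]\bigr),
\]
where $F_X$ is the geometric Frobenius of $X$ over $\F_{q^r}$, and --- crucially --- the two factors on the right commute. For commuting operators the spectral radius is submultiplicative, so
\[
\rho\bigl(F^*_{X/k,q}\circ H^i([C])\bigr)^{r}\leq\rho(F_X^*)\cdot\rho\bigl(H^i([C^{(q^{r-1})}]\circ\cdots\circ[C])\bigr),
\]
and one concludes using $\rho(F_X^*)=q^{ir/2}$ (purity) together with Proposition \ref{P:spectral} and submultiplicativity \form{mult} of the renormalized norm, then takes $r$-th roots. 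Your ``cleaner alternative'' at the end (Katz--Messing projector plus intersection-number estimate) is closer in spirit to the proof of Proposition \ref{P:spectral}, but as sketched it does not clearly produce the factor $q^{ni/2}$ with the right exponent, because without the commutativity observation there is no clean way to separate the contribution of Frobenius from that of the correspondence inside the trace. Finally, for the spreading-out in the general case you should work over a scheme of finite type over $\F_p$ (not over $\B{Z}[1/\ell]$), since $q$ is a fixed power of $p$; the paper also handles the relative Frobenius $F_{\wt{X}/S,q}$ carefully so that its geometric fibers identify with $F_{X/k,q}$, which your analogue of Claim \ref{C:indep} would need to incorporate.
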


\begin{proof}
{\bf Finite field case.} First we are going to show the assertion in the case when $k$ is an algebraic closure of a finite field $\F_q$.
Using \rl{elementary}(a), we may assume that $\al$ is the class $[C]$ of an irreducible subvariety $C\subseteq  X \times X^{(q)}$ and that closed subvarieties $X\subseteq\B{P}^n_k$ and $C\subseteq  X \times X^{(q)}$ are defined over $\F_{q^r}$. Then the Frobenius twist $X^{(q^r)}$ is naturally isomorphic to $X$ and the composition $F_X:X\overset{\on{F}_{X,q^r}}{\lra} X^{(q^r)}\simeq X$ is the geometric Frobenius endomorphism of $X$ over $\F_{q^r}$.

The class $[C]  \in A_{d}(X \times X^{(q)})$ gives rise to classes $[C^{(q^i)}] \in A_{d}(X^{(q^{i})} \times X^{(q^{i+1})})$. Moreover, each $X^{(q^{i})} \times X^{(q^{i+1})}$ come equipped with natural embeddings into $\B{P}^n_k \times \B{P}^n_k$,~and $||[C^{(q^i)}]||=||[C]||$ with respect to these embeddings.

We claim that

\begin{enumerate}

\item the endomorphism $\left(F^{*}_{X/k,q} \circ H^{i}([C])\right)^r$ of $H^{i}(X,\ql)$ equals
\[
\on{F}_{X}^* \circ  \left ( H^{i}([C^{(q^{r-1})}]) \circ \cdots \circ H^{i}([C^{(q)}])\circ H^{i}([C]) \right );
\]

\item endomorphisms $\on{F}_{X}^*$ and $H^{i}([C^{(q^{r-1})}]) \circ \cdots \circ H^{i}([C^{(q)}])\circ H^{i}([C])$ of $H^{i}(X,\ql)$ commute.

\end{enumerate}

Indeed, assertion (1) follows from equalities

\begin{equation}\label{P:spectral_twisted_1}
\on{F}_{X^{(j)}/k,q}^* \circ H^{i}([C^{(q^{j})}])=H^{i}([C^{(q^{j-1})}]) \circ \on{F}_{X^{(j-1)}/k,q}^*
\end{equation}
of  endomorphisms of $H^{i}(X^{(q^i)},\ql)$, equality
\[
\on{F}_X=\on{F}_{X/k,q^r}=\on{F}_{X^{(q^{r-1})}/k,q}\circ\ldots\circ \on{F}_{X^{(q)}/k,q}\circ \on{F}_{X/k,q}
\]
of endomorphisms  of $X$ and induction, while assertion (2) follows from the same equalities together with identifications $X^{(q^{j+r})}\simeq X^{(q^{j})}$ and $C^{(q^{j+r})}\simeq C^{(q^{j})}$.

Now the result easily follows. Indeed, it suffices to show that spectral radius of  $\left(F^{*}_{X/k,q} \circ H^{i}([C])\right)^r$ is at most
$q^{\frac{ir}{2}}||[C]||^r$. Using claims (1),(2) and purity (\cite{De}) it suffices to show that the spectral radius of
\[
H^{i}([C^{(q^{r-1})}]) \circ \cdots \circ H^{i}([C^{(q)}])\circ H^{i}([C])=H^{i}([C^{(q^{r-1})}] \circ \cdots \circ[C^{(q)}]\circ [C])
\]
is at most $||[C]||^r$. But this follows from a combination of Proposition~\ref{P:spectral}, inequality (\ref{Eq:mult}) and
equalities $||[C^{(q^i)}]||=||[C]||$.

\vskip 4truept
{\bf General case.} Again, we are going to deduce to the finite field case, shown above. We can assume that $k$ is a geometric generic point of a scheme $S$  of finite type over $\B{F}_p$, and
 $X$ is a geometric generic fiber of a closed subscheme $\wt{X}\subseteq\B{P}^n_S$. We let $\wt{X}^{(q)}$ be the pullback
 $\wt{X}\times_{S,F_{S,q}} S$ with respect to absolute $q$-Frobenius morphism $F_{S,q}:S\to S$. Since $F_{S,q}$ is functorial in $S$, for every geometric point $\bar{s}$ of $S$, the geometric fiber $(\wt{X}^{(q)})_{\bar{s}}$ of $\wt{X}^{(q)}$ is canonically isomorphic to $(\wt{X}_{\bar{s}})^{(q)}$, and the
geometric fiber over $\bar{s}$ of the relative Frobenius morphism $F_{\wt{X}/S,q}:\wt{X}\to \wt{X}^{(q)}$ is identified with $F_{X/\bar{s},q}$.
In particular, the geometric generic point of $F_{\wt{X}/S,q}$ is identified with $F_{X/k,q}:X\to X^{(q)}$. Furthermore, we can assume that $C$ is a geometric fiber of a closed subscheme $\wt{C}\subseteq\wt{X}\times_S\wt{X}$, $\wt{X}$ is smooth over $S$, and $\wt{C}$ is flat over $S$.
To finish the argument, we repeat the last paragraph of the proof of Proposition \ref{P:spectral} and use \rcl{indep}.
\end{proof}

\begin{Cor} \label{C:bound trace}
In the situation of \rco{spectral_twisted}, there exists a constant $M$ depending on $n$ and $\gdeg(X)$ such that
the trace $\Tr(F^*_{X,q} \circ H^{i}(\al))\in\B{Z}$ (see \re{purity}) satisfies inequality
\[
|\Tr(F^*_{X,q} \circ H^{i}(\al))|\leq M|\al| q^{i/2}.
\]
\end{Cor}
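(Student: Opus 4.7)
The plan is to combine the spectral radius bound from Corollary \ref{C:spectral_twisted} with a uniform bound on the Betti numbers of smooth closed subvarieties of $\B{P}^n_k$ of bounded degree. Set $\dt:=\gdeg(X)=\deg(X)$ (the equality holds because $X$ is a closed subvariety of $\B{P}^n_k$, smooth and in particular irreducible). By Corollary \ref{C:spectral_twisted}, the spectral radius of $F^*_{X,q}\circ H^i(\al)$ acting on $H^i(X,\ql)$ is bounded above by $q^{i/2}||\al||$. Since the trace of an endomorphism of a finite-dimensional vector space is the sum of its eigenvalues (with multiplicity), we have the elementary inequality
\[
|\Tr(F^*_{X,q}\circ H^i(\al))|\leq \dim_{\ql}H^i(X,\ql)\cdot \rho(F^*_{X,q}\circ H^i(\al)) \leq \dim_{\ql}H^i(X,\ql)\cdot q^{i/2}||\al||.
\]

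First, I would argue that $\dim_{\ql}H^i(X,\ql)$ is bounded in terms of $n$ and $\dt$ only, uniformly over all smooth closed subvarieties $X\subseteq\B{P}^n_k$ of degree $\dt$ (and over all algebraically closed base fields $k$ of any characteristic in which $\ell$ is invertible; here one should be slightly careful and either fix $\ell$ afterwards or use the independence of $\ell$ of the Betti numbers). By Theorem \ref{T:boundedness_degree}, such $X$ form a bounded collection: there is a scheme $S$ of finite type over $\B{Z}$ and a closed subscheme $\wt{X}\subseteq\B{P}^n_S$ parameterizing them. Using \rl{smooth}(a) and shrinking $S$ we may assume $\wt{X}\to S$ is smooth and proper. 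Then for every prime $\ell$ invertible on $S$, the sheaves $R^ip_*\ql$ (with $p:\wt{X}\to S$ the structure map) are lisse, hence their stalk dimensions are constructible functions on $S$. Since $S$ has finitely many connected components and each $\dim_{\ql}H^i(\wt{X}_{\bar s},\ql)$ is locally constant and bounded by $\sum_i\dim H^i$, an overall bound $B(n,\dt)$ exists; moreover by independence of $\ell$ one has a bound valid regardless of characteristic.

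Second, by Definition \ref{modified_norm} the renormalized norm satisfies $||\al||=N(n,\dt)\cdot|\al|$, where $N(n,\dt)$ is the constant from Corollary \ref{Hrushovski_Norm_Correspondence} (applied with $X_1=X$, $X_2=X^{(q)}$; note $\gdeg(X^{(q)})=\gdeg(X)=\dt$ by \rl{gdeg}(a)). Setting $M:=B(n,\dt)\cdot N(n,\dt)$, we obtain the desired inequality
\[
|\Tr(F^*_{X,q}\circ H^i(\al))|\leq M\,|\al|\,q^{i/2}.
\]

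The main obstacle, such as it is, is the uniform Betti number bound; once one commits to the Noetherian/constructibility argument above (or invokes Katz--Laumon style uniform constructibility, or Bittner/deformation arguments), the rest is a routine assembly of previously-established facts. The integrality and independence of $\ell$ of the trace are already guaranteed by \re{purity}.
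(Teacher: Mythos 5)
Your proof is correct and takes essentially the same approach as the paper: bound the trace by $h^i(X)\cdot\rho(F^*_{X,q}\circ H^i(\al))$, invoke \rco{spectral_twisted} and $||\al||=N(n,\deg X)|\al|$, and bound $h^i(X)$ uniformly in terms of $n$ and $\deg(X)$ via bounded families and constructibility of $R^if_*\ql$. The only cosmetic difference is that you re-derive the Betti number bound inline, whereas the paper isolates it as \rl{betti} and cites it.
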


\begin{proof}
By \re{purity}, the Betti number $h^i(X):=\dim_{\ql}H^i(X,\ql)$ in independent of $\ell\neq\on{char}(k)$. Using Corollary \ref{C:spectral_twisted},  identity $||\al||=N(n,\deg(X))|\al|$ (see Definition \ref{modified_norm}) and inequality $|\Tr(F^*_{X,q} \circ H^{i}(\al))|\leq \rho(F^*_{X,q} \circ H^{i}(\al))h^i(X)$, the assertion follows from a combination of \rco{bounded gdeg} and \rl{betti} below.
\end{proof}


\begin{Lem}\label{L:betti}
Let $\{X_{\alpha} \subseteq \B{P}^n_{k_{\alpha}}\}$ be a bounded collection of closed  smooth subvarieties. Then for every $i\in\B{N}$
the collection of Betti numbers $\{h^i(X_{\al})\}_{\al}$ is bounded.
\end{Lem}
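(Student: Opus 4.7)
The plan is to combine the constructibility theorems standard in this appendix with the proper smooth base change theorem in $\ell$-adic cohomology. By the definition of a bounded collection (\rn{bounded}), we may fix a scheme $S$ of finite type over $\Z$ and a closed subscheme $X \subseteq \B{P}^n_S$, with structure morphism $p\colon X \to S$, such that each $X_{\al}$ is isomorphic to a geometric fiber $X_{s_{\al}}$ for some $s_{\al} \in S(k_{\al})$. Since the Betti number $h^i(X_{\al})$ is independent of the auxiliary prime $\ell \ne \on{char}(k_{\al})$ (recalled in \re{purity}), it will suffice to show that the function $\bar{s} \mapsto \dim_{\ql} H^i(X_{\bar{s}}, \ql)$ attains only finitely many values as $\bar{s}$ ranges over geometric points of $S$, for any valid choice of $\ell$.

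The argument then proceeds by Noetherian induction on $S$. Using \rl{reduced irreducible}, we may pass to a dominating finite-type surjection and assume $S$ is integral (in particular, connected). Because every $X_{\al}$ is smooth and proper over $k_{\al}$, the locus in $S$ over which $p$ is smooth and proper is a non-empty open by the constructibility results \cite[Proposition 9.6.1]{EGAIV(III)} used throughout this appendix; shrinking $S$, we may assume $p$ is itself smooth and proper. Choosing any prime $\ell$ distinct from $\on{char}(k(\eta))$ at the generic point $\eta$, and shrinking $S$ further to the open locus where $\ell$ is invertible, we arrange that $\ell$ is invertible on all of $S$.

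The key input is then the proper smooth base change theorem: on this $S$, the higher direct image $R^i p_* \ql$ is a lisse $\ql$-sheaf whose stalk at each geometric point $\bar{s}$ is canonically isomorphic to $H^i(X_{\bar{s}}, \ql)$. Its rank is locally constant, hence (since $S$ is connected) equals some single integer $r$, and $\dim_{\ql} H^i(X_{\bar{s}}, \ql) = r$ holds throughout this stratum. Applying the inductive hypothesis to the closed complement of the open just constructed produces only finitely many further strata, giving a finite total collection of values for $\{h^i(X_{\al})\}_{\al}$. I do not anticipate a substantive obstacle; the only mild subtlety is the choice of a compatible $\ell$ at each stage, which is handled by the fact that each integral stratum has a well-defined generic characteristic determining an open locus of admissible primes.
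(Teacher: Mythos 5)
Your argument is correct and takes essentially the same approach as the paper: both represent the collection by a family $X \subseteq \B{P}^n_S$, shrink $S$ so the projection is smooth (and $\ell$ invertible), and conclude from the constructibility (equivalently, lisseness on strata) of $R^if_*(\ql)$ via the proper smooth base change theorem — the paper simply cites constructibility, while you make the Noetherian induction and the locally-constant-rank step explicit. The one small slip is the appeal to \rl{reduced irreducible} to ``assume $S$ is integral'': that lemma concerns the fibers of $X \to S$, not $S$ itself; to reduce to integral $S$ one just replaces $S$ by an irreducible component of $S_{\red}$, or by an open dense integral subscheme, inside the Noetherian induction.
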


\begin{proof}
Let $X\subseteq\B{P}^n_S$ be a family representing $\{X_{\alpha} \subseteq \B{P}^n_{k_{\alpha}}\}$. Using constructibility the set of $s\in S$ such that $X_s$ is smooth, we can assume as before that the projection $f:X\to S$ is smooth. We can fix a prime $\ell$ and assume that $\ell$ is invertible in $S$. By the proper base change theorem, for every geometric point $\ov{s}$ of $S$ we have an isomorphism
$H^i(X_{\ov{s}},\ql)\simeq  R^if_*(\ql)_{\ov{s}}$. Since $R^if_*(\ql)$ is constructible, the assertion follows.
\end{proof}






\end{document}